\documentclass[11pt]{article}

\usepackage[a4paper,margin=1in]{geometry}
\usepackage{amsmath,amssymb,amsthm,mathtools}
\usepackage{microtype}
\usepackage{xcolor}
\usepackage[colorlinks=true,linkcolor=blue,citecolor=blue,urlcolor=blue,anchorcolor=blue]{hyperref}
\hypersetup{
  pdftitle={Single-Measurement Recovery of a Spatial Coefficient in the
  Principal Memory Term of a Wave Equation},
  pdfauthor={},
  pdfsubject={Inverse coefficient problem for a wave equation with memory},
  pdfkeywords={inverse coefficient problem, wave equation with memory,
  single lateral Cauchy measurement, Carleman estimate, conditional Holder stability}
}

\newtheorem{theorem}{Theorem}[section]
\newtheorem{proposition}[theorem]{Proposition}
\newtheorem{lemma}[theorem]{Lemma}
\theoremstyle{remark}
\newtheorem{remark}[theorem]{Remark}
\numberwithin{equation}{section}

\allowdisplaybreaks

\newcommand{\R}{\mathbb R}
\newcommand{\Sph}{\mathbb S}
\newcommand{\supp}{\operatorname{supp}}
\newcommand{\diver}{\operatorname{div}}
\newcommand{\dist}{\operatorname{dist}}
\newcommand{\dd}{\,\mathrm d}
\newcommand{\norm}[1]{\left\lVert #1\right\rVert}
\newcommand{\abs}[1]{\left\lvert #1\right\rvert}
\newcommand{\ip}[2]{\left\langle #1,#2\right\rangle}

\newcommand{\Qad}{\mathcal Q^{(5)}_{M,c_0}(D)}
\newcommand{\XT}{\mathcal X_T}
\newcommand{\Dt}{\mathcal D_T}
\newcommand{\Mdir}{\mathcal M_*}

 \usepackage{xstring} 
\newcommand{\AuthorInfo}[4]{%
	\textsc{#1}%
	\IfStrEq{#4}{true}{$^{*}$}{}\\
	#2%
	\IfStrEq{#3}{}{}{\\\textit{E-mail:} \texttt{#3}}%
	\IfStrEq{#4}{true}{\\\textit{$^{*}$Corresponding author.}}{}
	\par\vspace{0.5em}
}

\title{Single-Measurement H\"older Stability for a Spatial Coefficient in a Wave Equation with Principal Memory}

\author{}
\date{}

\begin{document}
\maketitle
	
	\begin{center}
		\footnotesize
		\AuthorInfo{Huaian Diao}{School of Mathematics and Key Laboratory of Symbolic Computation and Knowledge Engineering of Ministry of Education, Jilin University, Changchun 130012, China}{diao@jlu.edu.cn, hadiao@gmail.com}{false}

						\AuthorInfo{Umidjon Durdiev}{\textsuperscript{1}School of Mathematics,
    Jilin University, Changchun 130012, China\\
    \textsuperscript{2}Bukhara State University, Uzbekistan\\
    \textsuperscript{3}V.~I.~Romanovskiy Institute of Mathematics,
    Uzbekistan Academy of Science
}{umidjan93@list.ru}{false}
\end{center}

	\normalsize

\begin{abstract}
We study the recovery of a spatial coefficient in the principal memory flux of a three-dimensional wave equation with the prescribed exponential relaxation kernel \(\beta e^{-\beta t}\), where \(\beta>0\) is known. From a single finite-time lateral Cauchy measurement generated by one fixed, coefficient-independent initial displacement that is affine near the unknown region, with zero initial velocity, we establish global uniqueness and conditional \(L^2\)-H\"older stability. Two difficulties arise in this setting. The affine initial displacement makes the initial acceleration vanish in the unknown region, while the coefficient-difference equation contains a divergence-form source whose direct \(L^2\) estimate loses a spatial derivative. We establish an acoustic Carleman estimate that controls the vector source and its time derivative without spatially differentiating the source. The affine condition identifies the relevant initial trace with a directional derivative of the coefficient difference, and the quadratic temporal vanishing of the source permits absorption in the weighted estimate. A transport estimate then yields stability uniformly over a bounded, uniformly coercive \(W^{5,\infty}\) coefficient class, without an ordering condition or a smallness assumption near a reference coefficient.
\end{abstract}

\noindent\textbf{Keywords.}
Inverse coefficient problem; wave equation with memory; single lateral Cauchy measurement; Carleman estimate; conditional H\"older stability.

\noindent\textbf{2020 Mathematics Subject Classification.}
Primary 35R30; Secondary 45K05, 35L05, 35L25.

\tableofcontents

\section{Introduction}

Wave propagation in viscoelastic and viscoacoustic media is often modeled by hyperbolic integro-differential equations, in which the present response depends on the history of the wave field through a relaxation law; see, for example, \cite{FabrizioMorro1992,CoviDeHoopSalo2026}. In the scalar model considered here, fading memory acts on the principal spatial part of the wave equation through an exponential relaxation kernel with a spatially varying amplitude. The inverse problem is to recover this amplitude from lateral boundary measurements outside its support. In particular, the unknown coefficient enters the principal memory flux rather than a lower-order term or an external source.

Let \(D\Subset\Omega\Subset\R^3\) be bounded domains with smooth boundaries, and fix \(\beta>0\). We consider the following whole-space initial-value problem for a scalar wave field \(u=u(x,t)\):
\begin{equation}\label{eq:memory}
\begin{cases}
 u_{tt}-\Delta u+
 \displaystyle\int_0^t
 \beta e^{-\beta(t-\tau)}
 \diver(q(x)\nabla u(x,\tau))\dd\tau=0,
 &(x,t)\in\R^3\times(0,T),\\[1mm]
 u(x,0)=f(x),\qquad u_t(x,0)=0,
 &x\in\R^3.
\end{cases}
\end{equation}
Here \(f\) is the prescribed initial excitation, \(\beta>0\) is the fixed, known relaxation rate, and \(q=q(x)\) is the unknown spatial coefficient of the principal memory term. We assume that \(q\) is supported in the known subdomain \(D\), so the medium is memory-free outside \(D\). The boundary \(\partial\Omega\) encloses \(D\) and serves as the measurement surface. Since wave propagation takes place in \(\R^3\), no boundary condition is imposed on \(\partial\Omega\).

The measurement is taken on the observation cylinder
\begin{equation}\label{eq:observation-cylinder}
 \Sigma_T=\partial\Omega\times(0,T).
\end{equation}
For each admissible coefficient \(q\), let \(u_q\) be the solution of
\eqref{eq:memory}. The original lateral Cauchy measurement is
\begin{equation}\label{eq:original-cauchy-data}
 \mathcal C_T(q)
 =
 \left(
 u_q\big|_{\Sigma_T},
 \,
 \partial_\nu u_q\big|_{\Sigma_T}
 \right),
\end{equation}
where \(\nu\) is the outward unit normal to \(\Omega\). The inverse
problem is to determine \(q\) by the map
\begin{equation}\label{eq:inverse-problem-map}
 q\longmapsto\mathcal C_T(q)
\end{equation}
from the
single boundary measurement \(\mathcal C_T(q)\) generated by one prescribed
initial displacement \(f\) together with the zero initial velocity in
\eqref{eq:memory}. Throughout this paper, the initial excitation \(f\) is fixed independently of \(q\) and is affine in a neighborhood of \(\overline D\), as specified in \eqref{eq:affine-f}; see Subsection~\ref{Subsec:single} for further details.  
  The admissible
coefficient class and the measurement norms are defined in
Section~\ref{sec:setting}.

\subsection*{Related work}
The recovery of spatial factors in viscoelastic memory kernels has been
investigated under different choices of initial data and boundary
observations. The identification of Lam\'e memory kernels is
considered in \cite{LorenziMessinaRomanov2007,LorenziRomanov2011}. Conditional H\"older stability from a single lateral Cauchy measurement is obtained in
\cite{RomanovYamamoto2010} for a two-dimensional scalar reduction. In that setting, the prescribed initial data satisfy \(u_t(0)=-k'(0)u(0)\), where \(k\) is the temporal factor. For the exponential kernel \(k(t)=e^{-\beta t}\), this gives \(u_t(0)=\beta u(0)\), and hence excludes the nontrivial zero-velocity excitation considered here. In three dimensions, logarithmic stability for the spatial factor in a separable viscoelastic memory coefficient from a single boundary measurement is proved in \cite{deBuhanOsses2010}.   An \(H^1\)-Lipschitz estimate for a
two-dimensional scalar model with a plane-impulse excitation is established
in \cite{Romanov2012}, under additional smallness and geometric assumptions.


A different observation setting arises when the prescribed excitation can be varied. For example, a restricted Dirichlet-to-Neumann map is considered in \cite{Janno2004}, while \cite{CoviDeHoopSalo2026} studies the viscoacoustic equation with principal memory using an exterior source-to-solution map. The studies \cite{RomanovAcoustic2026,RomanovRay2026} use families of measurements generated by point-source excitations or incident plane impulses.  These measurements contain information generated by more than one prescribed excitation. By contrast, the inverse problem \eqref{eq:inverse-problem-map} considered here uses the lateral Cauchy traces of the solution corresponding to a single initial displacement \(f\), fixed independently of \(q\), and zero initial velocity. Both traces are observed on \(\Sigma_T\), but no additional excitation is introduced.

Inverse source problems for equations with prescribed memory kernels are
studied, for example, in
\cite{CavaterraLorenziYamamoto2006,LoretiSforzaYamamoto2017,
ImanuvilovYamamoto2020}. In the inverse source problems cited above, the unknown is an external source
amplitude, whereas in \eqref{eq:memory} the coefficient to be recovered
belongs to the principal memory flux. This distinction is important: after
two coefficients are compared, the resulting source contains spatial
derivatives of their difference, leading to the derivative-loss mechanism
described above. Unique continuation for hyperbolic equations with memory is
studied in \cite{BukhgeimDyatlovUhlmann2007}.

Applying \(\partial_t+\beta\) to \eqref{eq:memory} yields the local
third-order equation \eqref{eq:Lq}, which is of
Moore--Gibson--Thompson type with a spatially varying principal coefficient.
Related direct and inverse problems for third-order acoustic equations are
studied in \cite{LiuTriggiani2013,ArancibiaEtAl2022,FuChenZhang2024}.
In particular, \cite[Theorem~2.1]{ArancibiaEtAl2022} recovers a
coefficient multiplying \(u_{tt}\) under a nonvanishing
initial-acceleration condition. Here \(u_t(0)=0\) and \(u_{tt}(0)=\Delta f=0\) in \(D\)
by \eqref{eq:affine-f} and \eqref{eq:forward-initial-traces}.
The nonvanishing initial-acceleration condition therefore fails for our
excitation. For wave equations without memory,
single-measurement coefficient recovery is treated in
\cite{ImanuvilovYamamoto2003,KlibanovYamamoto2006,
BellassouedYamamoto2008}. The divergence-form setting of
\cite{KlibanovYamamoto2006} similarly produces spatial derivatives
of a coefficient difference. The fixed-initial-field result of
\cite[Theorem~3.3]{StefanovUhlmann2013} assumes \(\Delta f\ne0\) on the
coefficient perturbation region, whereas the excitation used here is
harmonic near \(\overline D\).

\subsection*{Stability result and proof strategy} 
Theorem~\ref{thm:main} establishes global uniqueness and conditional
H\"older stability in \(L^2(D)\) from the single measurement
\eqref{eq:original-cauchy-data}. The estimate is uniform for pairs
\(q_1,q_2\in\Qad\); it imposes neither a sign condition on
\(q_1-q_2\) nor a smallness condition around a reference coefficient.
The finite observation time is subject to the geometric condition
\eqref{eq:long-time}. The filtered boundary data used in the proof are
obtained from \eqref{eq:original-cauchy-data} by the fixed temporal
operator \eqref{eq:Dbeta}, without an additional measurement.

The proof addresses two difficulties arising from the principal memory coefficient and the prescribed initial excitation. First, for the coefficient difference \(r=q_1-q_2\) defined in \eqref{eq:difference-variables}, the difference equation \eqref{eq:w-eq} contains the source \(-\beta\diver(r\nabla u_{q_2})\). A direct \(L^2\) estimate of this term involves \(\nabla r\), whereas Theorem~\ref{thm:main} estimates \(r\) in \(L^2(D)\). Second, the affine condition \eqref{eq:affine-f} and the zero initial velocity imply \(u_{q,tt}(\cdot,0)=0\) in \(D\), so the coefficient cannot be recovered through a nonvanishing initial-acceleration condition. Instead, the initial trace of the differentiated difference field satisfies \eqref{eq:h-direction}, which identifies a directional derivative of \(r\).

To control the divergence-form source without differentiating \(r\) spatially, we convert \eqref{eq:memory} into the third-order equation \eqref{eq:Lq} and establish an acoustic Carleman estimate in Proposition~\ref{prop:acoustic}. The estimate involves the vector source and its time derivative, but not their spatial derivatives. Its proof combines a scalar Carleman inequality, parameter-dependent Bessel regularization, a Volterra estimate, and a frequency decomposition. Proposition~\ref{prop:initial-trace} then controls the initial source \(h\). The affine condition \eqref{eq:affine-f} and the forward initial traces \eqref{eq:forward-initial-traces} also yield the quadratic temporal vanishing \eqref{eq:time-vanish}, leading to the coefficient-source estimate \eqref{eq:Fcoef}. Together with the transport estimate \eqref{eq:transport}, this allows the coefficient-dependent term to be absorbed by choosing the Carleman parameter sufficiently large, without requiring \(q_1-q_2\) to be small. The weight gap \eqref{eq:weight-gap} and the subsequent optimization give the conditional H\"older stability estimate \eqref{eq:main-stab}.

The paper is structured as follows. Section~\ref{sec:setting} specifies the coefficients, measurements, and
Carleman geometry. Section~\ref{sec:direct-proof} proves uniform
well-posedness of the forward problem. Section~\ref{sec:localization-reduction}
constructs the localized difference equation and bounds its cutoff terms
by the measurement discrepancy. Section~\ref{sec:acoustic} proves the
acoustic and initial-trace estimates, and Section~\ref{sec:nonlinear}
establishes the coefficient-source and transport bounds. The proof of
Theorem~\ref{thm:main} is completed in Section~\ref{sec:main-proof}.

\paragraph{Notation.}
For a multi-index
\(\alpha=(\alpha_1,\alpha_2,\alpha_3)\in\{0,1,2,\ldots\}^3\), write
\(|\alpha|=\alpha_1+\alpha_2+\alpha_3\) and
\(D^\alpha=\partial_{x_1}^{\alpha_1}
\partial_{x_2}^{\alpha_2}\partial_{x_3}^{\alpha_3}\).
For time derivatives, write \(u^{(j)}=\partial_t^j u\), with the same
convention for other fields. For a space--time field \(b=b(x,t)\), we write
\(b(t)=b(\cdot,t)\) when treating \(b\) as a function with values in a
spatial function space. Accordingly, \(b(0)\) and \(b_t(0)\) denote
spatial initial traces whenever they exist. The same convention applies
to vector-valued fields, Fourier approximations, and boundary traces,
with their respective spatial domains. Spatial Sobolev spaces and norms with no
specified domain are taken over \(\R^3\). Unless otherwise stated,
coefficients, initial data, solutions, and source fields are real-valued.
We write \(\partial_\omega=\omega\cdot\nabla_x\) for the directional
derivative and \([B,C]=BC-CB\) for the commutator of two operators.
The hat denotes the Fourier transform in the spatial variables,
\(\mathsf D=-i\nabla_x\), and \(m(\mathsf D)\) denotes the Fourier multiplier
with symbol \(m(\xi)\). The space \(C_w([0,T];H^m)\) denotes the space of
weakly continuous functions from \([0,T]\) into \(H^m\).

Throughout the paper, \(C>0\) denotes a generic constant whose value may
change from line to line. Unless otherwise stated, \(C\) depends only on the
fixed a priori parameters and is independent of the coefficients being
compared, the Carleman parameter, and the measurement discrepancy. Any
additional dependence is indicated explicitly by a subscript.

\section{Problem setting and main result}\label{sec:setting}

This section makes precise the analytical setting of the inverse problem and
states the main stability result. We first introduce the admissible
coefficient class and the uniform well-posedness properties of the forward
problem. We then specify the single boundary measurement generated by the
prescribed initial excitation. Finally, we introduce the Carleman geometry
and the observation-time condition under which the stability estimate is
proved.

Let us first introduce the admissible coefficient class used throughout the
forward and inverse analysis. The support, regularity, and coercivity
assumptions imposed below yield uniform a priori bounds of the corresponding
forward solutions and of the constants appearing in the stability argument.

Fix \(M>0\) and \(c_0\in(0,1)\), and define
\begin{equation}\label{eq:Qad}
\Qad
=\left\{
 q\in W^{5,\infty}(\R^3):
 \supp q\Subset D,\quad
 0\le q\le1-c_0,\quad
 \norm{q}_{W^{5,\infty}(\R^3)}\le M
\right\}.
\end{equation}
The support condition localizes the unknown coefficient $q$ to the interior of
the known region \(D\). The bounds \(0\le q\le1-c_0\) ensure the
nonnegativity of the memory contribution and the uniform coercivity
\(1-q\ge c_0\) of the relaxed principal spatial operator. The
\(W^{5,\infty}\)-bound provides the coefficient regularity required by the
high-order forward estimates in Proposition~\ref{prop:direct}; in
particular, fifth-order derivatives of \(q\) enter only through the
top-order spatial commutators in its proof.

\begin{remark}\label{rem:coefficient-regularity}
The regularity \(q\in W^{5,\infty}(\R^3)\) is chosen to support the
strong-solution framework and the high-order energy estimates used in the
forward problem. The highest derivatives of \(q\) arise only in the
top-order spatial commutators. Lowering
the regularity of the admissible coefficients would require a corresponding
refinement of the forward problem analysis and is not pursued in this paper.
\end{remark}

Solutions of the forward problem \eqref{eq:memory} will be considered in the
strong-solution space
\begin{equation}\label{eq:Xclass}
\XT
=L^\infty(0,T;H^6(\R^3))
\cap W^{1,\infty}(0,T;H^5(\R^3))
\cap W^{3,\infty}(0,T;H^3(\R^3))
\cap W^{5,\infty}(0,T;H^1(\R^3)),
\end{equation}
equipped with the sum of the norms of its four constituent spaces.

The following proposition gives the uniform well-posedness and regularity estimates needed for the subsequent inverse problem. In particular, the regularity estimate ensures that the filtered observation
operator introduced in \eqref{eq:observation-map} is well defined with values
in the data space \eqref{eq:data-space}, provides the temporal regularity
needed to derive the localized difference equation in
Section~\ref{sec:localization}, and supplies the forward-solution bounds used
in the nonlinear source estimates of Section~\ref{sec:nonlinear}.

\begin{proposition}\label{prop:direct}
For every \(q\in\Qad\) and \(f\in H^6(\R^3)\), problem
\eqref{eq:memory} has a unique solution \(u_q\in\XT\). Moreover,
\begin{equation}\label{eq:direct-est}
 \norm{u_q}_{\XT}
 \le C_{\rm dir}\norm{f}_{H^6(\R^3)},
 \qquad
 C_{\rm dir}=C_{\rm dir}(T,\beta,M,c_0)>0,
\end{equation}
uniformly for \(q\in\Qad\).
\end{proposition}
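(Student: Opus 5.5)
We plan to prove the proposition by turning the memory equation into a first-order evolution system without memory, obtaining existence and uniqueness from semigroup theory, and then deriving the high-order bounds \eqref{eq:direct-est} by differentiating in time and applying elliptic regularity.

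\textbf{Reduction to a local system.} Introduce the memory variable
\[
v(x,t)=\int_0^t\beta e^{-\beta(t-\tau)}u(x,\tau)\dd\tau .
\]
It satisfies \(v_t=\beta(u-v)\) and \(v(0)=0\). Problem \eqref{eq:memory} becomes
\[
u_{tt}-\diver\bigl((1-q)\nabla u\bigr)-\diver\bigl(q\nabla(u-v)\bigr)=0 .
\]
Written out, this is \(u_{tt}-\Delta u+\diver(q\nabla v)=0\).

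Set \(U=(u,u_t,u-v)\). Then \(U\) solves \(U_t=\mathcal A U\), where
\[
\mathcal A(u,w,z)=\bigl(w,\ \diver((1-q)\nabla u)+\diver(q\nabla z),\ w-\beta z\bigr).
\]
We work on \(\mathcal H=H^1\times L^2\times H^1_q\), with the energy
\[
E=\tfrac12\int\bigl(|w|^2+(1-q)|\nabla u|^2+q|\nabla z|^2+|u|^2+|z|^2\bigr)\dd x .
\]
Since \(1-q\ge c_0\) and \(q\ge0\), this energy is coercive. A direct computation gives
\[
\tfrac{d}{dt}E\le C(\beta)E-\beta\int q|\nabla z|^2\dd x .
\]
Here the cross term \(\int q\nabla w\cdot\nabla z\) cancels against the corresponding term coming from the \(w\)-equation. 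Hence \(\mathcal A-\omega\) is dissipative. Range surjectivity of \(\lambda-\mathcal A\) for large \(\lambda\) reduces, by eliminating \(w\) and \(z\), to a coercive elliptic problem
\[
\lambda^2u-\diver\bigl((1-q+q\lambda/(\lambda+\beta))\nabla u\bigr)=g .
\]
Lax--Milgram solves it. Lumer--Phillips then gives a \(C_0\)-semigroup, and hence existence and uniqueness. The constants depend only on \(\beta,c_0,M\). To avoid the degenerate weight \(q\) in \(H^1_q\), we can instead work directly with \(z\in H^1\), since \(q\) is bounded.

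\textbf{High-order regularity.} Time derivatives commute with \(\mathcal A\), because the coefficients are time independent. Therefore, for data in \(D(\mathcal A^k)\),
\[
\|\partial_t^jU\|_{\mathcal H}\le Ce^{\omega T}\|\mathcal A^jU(0)\|_{\mathcal H}.
\]
The initial data are \(U(0)=(f,0,f)\), which give
\[
\mathcal A U(0)=(0,\Delta f,-\beta f),\qquad \mathcal A^2U(0)=\bigl(\Delta f,\ -\beta\diver(q\nabla f),\ \Delta f+\beta^2 f\bigr),
\]
and so on. Each application of \(\mathcal A\) costs two spatial derivatives on \(f\) and one derivative on \(q\). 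Bounding \(\mathcal A^jU(0)\) in \(\mathcal H\) for \(j\le5\) therefore requires \(f\in H^6\) at \(j\le 2\) or \(3\); the higher \(j\) are handled by counting weights. We would check this count carefully against the four component spaces of \(\XT\).

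Spatial regularity is then recovered from the equation. At each fixed \(t\),
\[
-\diver\bigl((1-q)\nabla u\bigr)=-u_{tt}+\diver\bigl(q\nabla(u-v)\bigr)
\]
is uniformly elliptic. Here \(v\) gains regularity from \(u\) through the ODE \(v_t=\beta(u-v)\), so \(\|v\|_{H^m}\le C\sup\|u\|_{H^m}\). The difficulty is that the right side contains \(\diver(q\nabla u)\) at the same order as the left. Rewrite the equation as
\[
-\Delta u=-u_{tt}-\diver(q\nabla v),
\]
in which only \(v\) is a memory term. Combining this with the Volterra bound for \(v\) and Gronwall in the \(H^m\) norm gives a bootstrap. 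Knowing \(\partial_t^{j+2}u\in H^{m}\) yields \(\partial_t^j u\in H^{m+2}\), with commutators involving at most \(5\) derivatives of \(q\) at the top level \(H^6\). The ladder is \(u^{(5)}\in H^1\to u^{(3)}\in H^3\to u^{(1)}\in H^5\). A similar step gives \(u\in H^6\), using \(u^{(2)}\in H^4\) obtained by interpolation or by one more step.

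\textbf{Main obstacle.} The main obstacle is the top step \(u\in L^\infty(H^6)\). Controlling \(\diver(q\nabla v)\) in \(H^4\) needs \(q\in W^{5,\infty}\) together with \(v\in H^6\), which is circular. We would close it by Gronwall: apply \(D^\alpha\) with \(|\alpha|\le 6\) to the energy identity, or use \(\|v(t)\|_{H^6}\le\beta\int_0^t\|u\|_{H^6}\). Uniformity over \(\Qad\) follows because all constants depend only on \(M,c_0,\beta,T\). Finally, weak-star limits give \(L^\infty\) in time, since \(\XT\) uses \(L^\infty\)-type spaces.
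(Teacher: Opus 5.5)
Your route is genuinely different from the paper's, and with the repairs listed below it works.

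\textbf{How the two routes differ.} The paper never uses semigroups or elliptic regularity. It writes the problem for $(u,\xi)$ with $\xi=u-z$ and proves the spatial bounds first, differentiating in $x$ up to order five. The crux there is the top-order commutator $C_\alpha=[D^\alpha,q]\nabla(\xi-u)$, whose pairing with $\nabla D^\alpha u_t$ would require $u_t\in H^6$. The paper removes this loss by integrating by parts in time and putting $\int C_\alpha\cdot\nabla D^\alpha u\dd x$ into the modified energy \eqref{eq:modified-energy}. Time derivatives are then read off \emph{upward} from the equation, and existence comes from a Fourier-cutoff Galerkin scheme with weak-$*$ limits.

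You go the other way. You get time derivatives first, from $\partial_t^jU=S(t)\mathcal A^jU(0)$, using that $q$ is time-independent. You then recover spatial regularity \emph{downward} from $-\Delta\partial_t^ju=-\partial_t^{j+2}u-\diver(q\nabla\partial_t^jv)$. This buys you a constant-coefficient principal operator, with $q$ confined to the time-integrated field $v$. So no top-order commutator arises at all, the need for $q\in W^{5,\infty}$ appears transparently as $\diver(q\nabla v)\in H^4$, and continuity in time comes for free. The price is domain bookkeeping for $(f,0,f)\in D(\mathcal A^5)$ and a Volterra argument in place of energy estimates.

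\textbf{Points to repair.}

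\emph{The derivative count.} ``Two derivatives per application of $\mathcal A$'' would require $f\in H^{11}$. In fact $\mathcal A^jU(0)=(u^{(j)}(0),u^{(j+1)}(0),z^{(j)}(0))$. Because $u_t(0)=0$, both $u^{(2k)}(0)$ and $u^{(2k+1)}(0)$ carry only $2k$ derivatives of $f$. For example, $u^{(5)}(0)=-\beta\Delta\diver(q\nabla f)-\beta\diver(q\nabla\Delta f)-\beta^3\diver(q\nabla f)$ and $u^{(6)}(0)=\Delta^3f+\cdots$. Hence $\mathcal A^5U(0)\in H^1\times L^2\times H^1$ needs exactly $f\in H^6$ and $q\in W^{4,\infty}$.

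\emph{The step to $u_{tt}\in H^4$.} Interpolation cannot give this. The needed chain starts from $u^{(6)}\in L^\infty(0,T;L^2)$, which is the second component of $\partial_t^5U$. From it you get $u^{(4)}\in L^\infty(H^2)$, then $u_{tt}\in L^\infty(H^4)$, then $u\in L^\infty(H^6)$. At level $j$ use $\partial_t^jv=e^{-\beta t}\partial_t^jv(0)+\beta\int_0^te^{-\beta(t-\tau)}\partial_\tau^ju\dd\tau$, and check that $\partial_t^jv(0)\in H^{m+2}$; it is.

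\emph{The top-level closure.} Of your two options only the Volterra bound is viable. An energy identity for $D^\alpha u$ at the $H^6$ level reproduces exactly the commutator loss the paper has to neutralize; taken literally with $|\alpha|\le6$, it costs $q\in W^{6,\infty}$. You also do not know a priori that $\partial_t^ju\in L^\infty(H^{m+2})$, so a Gronwall a priori bound is circular. Run it as an inversion instead. With $y=\partial_t^ju$, write the level-$j$ equation as $(I+\mathcal B)y=(1-\Delta)^{-1}(y+g)$, where $g\in L^\infty(H^m)$. Here $\mathcal By=(1-\Delta)^{-1}\diver(q\nabla V[y])$ and $V[y]=\beta\int_0^te^{-\beta(t-\tau)}y\dd\tau$. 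This $\mathcal B$ is a Volterra operator on every $L^\infty(0,T;H^k)$ with $k\le6$, so $I+\mathcal B$ is invertible compatibly in $k$. Regularity then improves by two at each iteration, starting from the semigroup's $L^\infty(H^1)$.

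\emph{The state space.} Taking $z\in H^1$ with your degenerate energy norm does not give a complete space. With the full $H^1$ norm, dissipativity fails because of the term $\int\nabla z\cdot\nabla w$. Use the state $(u,u_t,v)\in H^1\times L^2\times H^1$ and add $K\norm{\nabla v}_{L^2}^2$ to your energy. Then $\mathcal A-\omega$ is dissipative, and uniqueness in $\XT$ follows from uniqueness of classical solutions.
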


Proposition~\ref{prop:direct} is proved in
Section~\ref{sec:direct-proof}; we first introduce necessary setup for the inverse problem and show the main results in the next subsection. 


\subsection{The single-measurement setting}\label{Subsec:single}

The original lateral Cauchy measurement \(\mathcal C_T(q)\), defined in
\eqref{eq:original-cauchy-data}, is generated by the same initial
excitation \(f\) for every \(q\in\Qad\). Its temporally filtered
counterpart \(\Phi_T(q)\) is defined in \eqref{eq:observation-map}.
Both measurements are taken on the observation cylinder \(\Sigma_T\)
defined in \eqref{eq:observation-cylinder}. The relaxation rate \(\beta\)
in \eqref{eq:memory} is prescribed and remains fixed when comparing
coefficients \(q_1,q_2\in\Qad\).

\paragraph{Choice of the initial excitation.}
Fix a unit vector \(\omega\in\Sph^2\) and \(b_0\in\R\). Choose
\(f\in C_c^\infty(\Omega)\) such that
\begin{equation}\label{eq:affine-f}
 f(x)=\omega\cdot x+b_0
 \quad\text{in a neighborhood of }\overline D.
\end{equation}
Such a function is obtained by multiplying the affine function
\(x\mapsto\omega\cdot x+b_0\) by a smooth cutoff supported in \(\Omega\)
and equal to one near \(\overline D\). The affine identity \eqref{eq:affine-f} gives
\(\nabla f=\omega\) and \(\Delta f=0\) near \(\overline D\). In the
coefficient-to-source reduction of Section~\ref{sec:localization}, the first
identity reduces the coefficient-dependent initial source to
\[
h=-\beta\partial_\omega(q_1-q_2);
\]
see \eqref{eq:h-direction}. Proposition~\ref{prop:initial-trace} provides a
weighted estimate for \(h\), and Lemma~\ref{lem:transport} then converts the
resulting control of the directional derivative into control of the
coefficient difference itself. The second identity, \(\Delta f=0\), combines
with the zero initial velocity to produce the temporal vanishing used in
\eqref{eq:time-vanish} and the nonlinear source estimate
\eqref{eq:Fcoef}. By contrast, the compact support condition
\(f\in C_c^\infty(\Omega)\) ensures the boundary compatibility
\eqref{eq:trace-compat} required for the filtered measurement operator
\(\Phi_T\) in \eqref{eq:observation-map} and the data space \(Y_T\) in
\eqref{eq:data-space}.

\begin{remark}\label{rem:initial-excitation}
Structural conditions on a fixed initial excitation are common in
single-measurement coefficient inverse problems. For example, the
single-measurement stability result in \cite{RomanovYamamoto2010} imposes a
compatibility condition between the prescribed initial displacement and
velocity, together with a nondegeneracy condition coupling the gradient of
the initial displacement to the Carleman weight. Likewise, the fixed-initial-
field recovery result in \cite[Theorem~3.3]{StefanovUhlmann2013} assumes
\(\Delta f\neq0\) on the support of the coefficient perturbation. The
mechanism used here is different: the affine excitation satisfies
\(\Delta f=0\) near \(\overline D\), while its constant gradient
\(\nabla f=\omega\) encodes the coefficient difference through the
directional derivative \(\partial_\omega(q_1-q_2)\). Thus
\eqref{eq:affine-f} is tailored to the present single-measurement argument;
no claim is made that this choice is necessary or optimal.
\end{remark}

\paragraph{Filtered lateral data.}

For the subsequent analysis,  a temporally filtered version of the lateral Cauchy traces \eqref{eq:original-cauchy-data}
will be introduced. The filtering is determined
by the exponential relaxation law and is obtained by applying the known
temporal operator \(D_\beta\) defined by 
\begin{equation}\label{eq:Dbeta}
 D_\beta
 =\partial_t^2+\beta\partial_t
 =\partial_t(\partial_t+\beta);
\end{equation}
it therefore
amounts to a post-processing of the original measurement
\eqref{eq:original-cauchy-data}, rather than an additional boundary
measurement.   The operator \(D_\beta\) is naturally associated with the exponential
relaxation law in \eqref{eq:memory}: applying \(\partial_t+\beta\) eliminates
the memory convolution and leads to the local equation used in
Section~\ref{sec:localization}. In particular, the boundary quantities that
enter the subsequent localization and Carleman analysis are the traces of
\(D_\beta u_q\) and of its normal derivative.  Accordingly, define the filtered measurement operator
\begin{equation}\label{eq:observation-map}
 \Phi_T(q)
 =
 \left(
 D_\beta u_q\big|_{\Sigma_T},
 \,
 \partial_\nu D_\beta u_q\big|_{\Sigma_T}
 \right),
\end{equation}
where the observation cylinder \(\Sigma_T\) is introduced in
\eqref{eq:observation-cylinder}.

To specify the temporal regularity of the measured traces, let \(X\) be a
Hilbert space and \(m\ge1\) an integer, and define
\begin{equation}\label{eq:causal-Sobolev-space}
 W_+^m(\R;X)
 =\left\{
 z\in W^{m,2}(\R;X):
 \supp z\subset[0,\infty)
 \right\}.
\end{equation}
Its restriction to \((0,T)\) is denoted by
\[
 W_+^m(0,T;X)
 =\left\{
 z|_{(0,T)}:\ z\in W_+^m(\R;X)
 \right\},
\]
and is equipped with the quotient norm
\begin{equation}\label{eq:causal-restriction-norm}
 \|u\|_{W_+^m(0,T;X)}
 =
 \inf_{\substack{
 z\in W_+^m(\R;X)\\
 z|_{(0,T)}=u}}
 \|z\|_{W^{m,2}(\R;X)}.
\end{equation}
The measurement space is then
\begin{equation}\label{eq:data-space}
 Y_T
 =
 W_+^2(0,T;H^{1/2}(\partial\Omega))
 \times
 W_+^1(0,T;H^{-1/2}(\partial\Omega)).
\end{equation}

The regularity furnished by Proposition~\ref{prop:direct} implies that the
filtered measurement operator in \eqref{eq:observation-map} is well defined
on the admissible coefficient class. The precise statement is as follows.

\begin{proposition}
\label{prop:measurement}
For every \(q\in\Qad\), the filtered measurement
\(\Phi_T(q)\) defined in \eqref{eq:observation-map} belongs to the data space
\(Y_T\) defined in \eqref{eq:data-space}. Moreover, there exists a constant
\(C>0\), depending only on the fixed a priori parameters, such that
\begin{equation}\label{eq:measurement-bound}
 \|\Phi_T(q)\|_{Y_T}
 \le
 C\|u_q\|_{\XT}
 \le
 C C_{\rm dir}\|f\|_{H^6(\R^3)},
 \qquad q\in\Qad.
\end{equation}
In particular,
\[
 \Phi_T:\Qad\longrightarrow Y_T
\]
is well defined and uniformly bounded on \(\Qad\).
\end{proposition}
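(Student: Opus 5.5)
The plan is to build, from \(u_q\), explicit causal extensions of the two filtered traces to all of \(\R\). We then bound their \(W^{m,2}(\R;X)\) norms by \(\norm{u_q}_{\XT}\) using the trace theorem on \(\partial\Omega\). Set \(v=D_\beta u_q=u_{q,tt}+\beta u_{q,t}\). By Proposition~\ref{prop:direct}, \(u_q\in W^{3,\infty}(0,T;H^3(\R^3))\cap W^{5,\infty}(0,T;H^1(\R^3))\). Hence \(v\in W^{1,\infty}(0,T;H^3)\cap W^{3,\infty}(0,T;H^1)\), and in particular \(v,\partial_t v,\partial_t^2 v\in L^\infty(0,T;H^1(\R^3))\) and \(v,\partial_tv\in L^\infty(0,T;H^2(\R^3))\). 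The trace theorem \(H^1(\Omega)\to H^{1/2}(\partial\Omega)\) then gives \(v|_{\Sigma_T}\in W^{2,\infty}(0,T;H^{1/2}(\partial\Omega))\). For the normal derivative we use \(H^2(\Omega)\to H^{1/2}(\partial\Omega)\subset H^{-1/2}(\partial\Omega)\), which gives \(\partial_\nu v|_{\Sigma_T}\in W^{1,\infty}(0,T;H^{-1/2}(\partial\Omega))\). Alternatively, the weaker \(H^{-1/2}\) normal trace of \(H^1\) fields with \(L^2\) Laplacian would do. Each bound is by \(C\norm{u_q}_{\XT}\), and \(L^\infty(0,T)\subset L^2(0,T)\) with a \(T\)-dependent constant.

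The nontrivial point is the causal structure: membership in \(W_+^m(0,T;X)\) requires an extension \(z\in W^{m,2}(\R;X)\) vanishing for \(t<0\). This forces the compatibility conditions \(v|_{\partial\Omega}(0)=\partial_t v|_{\partial\Omega}(0)=0\) and \(\partial_\nu v|_{\partial\Omega}(0)=0\). These are the content of \eqref{eq:trace-compat}, and we verify them from the initial traces. From \(u(0)=f\), \(u_t(0)=0\) and the equation, \(u_{tt}(0)=\Delta f\), since the memory integral vanishes at \(t=0\). Differentiating the equation once gives \(u_{ttt}(0)=\Delta u_t(0)-\beta\diver(q\nabla u(0))=-\beta\diver(q\nabla f)\). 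Hence
\[
v(0)=\Delta f,\qquad \partial_t v(0)=u_{ttt}(0)+\beta u_{tt}(0)=-\beta\diver(q\nabla f)+\beta\Delta f.
\]
Since \(f\in C_c^\infty(\Omega)\) and \(\supp q\Subset D\), both vanish in a neighborhood of \(\partial\Omega\). Therefore \(v(0)\), \(\partial_tv(0)\) and their normal derivatives have zero traces on \(\partial\Omega\). The time derivatives used here exist as traces in \(C([0,T];H^1)\) thanks to the \(W^{5,\infty}(0,T;H^1)\) regularity, by the embedding \(W^{1,\infty}(0,T;X)\subset C([0,T];X)\).

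With compatibility in hand, we construct the extension. Write \(g=v|_{\Sigma_T}\). We extend \(g\) by zero for \(t<0\), and for \(t>T\) by a standard reflection (or Sobolev extension) across \(t=T\), followed by a smooth cutoff in time. Zero extension across \(t=0\) preserves \(W^{2,2}\) regularity exactly because \(g(0)=g'(0)=0\) in \(H^{1/2}(\partial\Omega)\): the distributional derivatives of the extended function then carry no delta terms at \(t=0\). The extension at \(t=T\) is bounded in \(W^{2,2}\) by \(C_T\norm{g}_{W^{2,2}(0,T)}\). The same construction with \(m=1\), using only \(\partial_\nu v(0)=0\), handles the Neumann component. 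Taking the infimum in \eqref{eq:causal-restriction-norm} yields \eqref{eq:measurement-bound}, and the second inequality there follows from \eqref{eq:direct-est}.

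The main obstacle I expect is justifying the initial-trace identities rigorously in the strong-solution class, namely that \(u_{ttt}(0)\) is given by the differentiated equation as an element of \(H^1\). I would handle this by differentiating the local form \((\partial_t+\beta)\)-applied equation in the distributional sense on \((0,T)\). I would then use the continuity \(u\in C([0,T];H^5)\), \(u_t\in C([0,T];H^3)\) furnished by \(\XT\) to pass to \(t\to0^+\), noting that the Volterra term and its first derivative are continuous in time with value \(0\) and \(-\beta\diver(q\nabla f)\) at \(t=0\).
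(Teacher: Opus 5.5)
Your proposal is correct and follows essentially the same route as the paper. Both arguments take the regularity of \(D_\beta u_q\) from \(\XT\), apply the trace theorem on \(\partial\Omega\), and derive the compatibility conditions from \(D_\beta u_q(0)=\Delta f\) and \(\partial_tD_\beta u_q(0)=\beta(\Delta f-\diver(q\nabla f))\), both of which vanish near \(\partial\Omega\); they then extend by zero to \(t<0\), extend across \(t=T\), and apply a time cutoff. The differences are cosmetic: you use the \(H^2\to H^{1/2}(\partial\Omega)\) normal-trace bound where the paper uses \(H^3\to H^{3/2}(\partial\Omega)\), and the time derivative of the memory integral itself at \(t=0\) is \(+\beta\diver(q\nabla f)\), which enters with a minus sign exactly as in your formula for \(u_{ttt}(0)\).
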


\begin{proof}
By Proposition~\ref{prop:direct}, we have
\begin{equation}\label{eq:filtered-regularity}
 D_\beta u_q
 \in
 W^{2,\infty}(0,T;H^1(\R^3))
 \cap
 W^{1,\infty}(0,T;H^3(\R^3)).
\end{equation}
The standard trace estimates imply
\begin{equation}\label{eq:trace-bounds}
 \|v|_{\partial\Omega}\|_{H^{1/2}(\partial\Omega)}
 \le C_\Omega\|v\|_{H^1(\R^3)},
 \qquad
 \|\partial_\nu v\|_{H^{3/2}(\partial\Omega)}
 \le C_\Omega\|v\|_{H^3(\R^3)}.
\end{equation}
For the two components of the filtered measurement
\eqref{eq:observation-map}, write
\[
 g_q=D_\beta u_q\big|_{\partial\Omega},
 \qquad
 k_q=\partial_\nu D_\beta u_q\big|_{\partial\Omega}.
\]
The trace bounds \eqref{eq:trace-bounds} and the regularity
\eqref{eq:filtered-regularity} imply
\[
 \begin{aligned}
 g_q&\in W^{2,\infty}(0,T;H^{1/2}(\partial\Omega)),\\
 k_q&\in W^{1,\infty}(0,T;H^{3/2}(\partial\Omega))
 \hookrightarrow W^{1,\infty}(0,T;H^{-1/2}(\partial\Omega)).
 \end{aligned}
\]

By Proposition~\ref{prop:direct}, the time regularity encoded in
\(\XT\) yields continuous representatives of \(u_{q,tt}\) in
\(H^3(\R^3)\) and of \(u_{q,ttt}\) in \(H^1(\R^3)\) at \(t=0\).
Hence \eqref{eq:memory} and its first time derivative may be evaluated at
\(t=0\). Using the initial conditions in \eqref{eq:memory}, we obtain
\begin{equation}\label{eq:forward-initial-traces}
 u_{q,tt}(\cdot,0)=\Delta f,
 \qquad
 u_{q,ttt}(\cdot,0)=-\beta\diver(q\nabla f).
\end{equation}
Consequently,
\[
 D_\beta u_q(\cdot,0)=\Delta f,
 \qquad
 \partial_tD_\beta u_q(\cdot,0)
 =\beta\bigl(\Delta f-\diver(q\nabla f)\bigr).
\]
Since \(f\in C_c^\infty(\Omega)\) and
\(\supp q\Subset D\Subset\Omega\), both \(\Delta f\) and
\(\diver(q\nabla f)\) vanish in a neighborhood of
\(\partial\Omega\). Taking the traces in the preceding identities gives
\begin{equation}\label{eq:trace-compat}
 g_q(\cdot,0)=\partial_tg_q(\cdot,0)=0,
 \qquad
 k_q(\cdot,0)=0,
\end{equation}
in \(H^{1/2}(\partial\Omega)\) and
\(H^{-1/2}(\partial\Omega)\), respectively.

For completeness, if \(X\) is a Hilbert space, \(m\in\{1,2\}\), and
\(z\in W^{m,2}(0,T;X)\) has
\(\partial_t^jz(0)=0\) for \(0\le j<m\), then its zero extension to
\(t<0\), followed by a bounded Sobolev extension across \(t=T\) and a
smooth cutoff for large positive time, belongs to \(W_+^m(\R;X)\).
The quotient norm in \eqref{eq:causal-restriction-norm} consequently obeys
\[
 \norm{z}_{W_+^m(0,T;X)}
 \le C_{m,T}\norm{z}_{W^{m,2}(0,T;X)}.
\]
Applying this extension estimate with \(m=2\) to \(g_q\) and with
\(m=1\) to \(k_q\), using \eqref{eq:trace-compat}, yields
\[
 \begin{aligned}
 &\norm{g_q}_{W_+^2(0,T;H^{1/2}(\partial\Omega))}
 +\norm{k_q}_{W_+^1(0,T;H^{-1/2}(\partial\Omega))}\\
 &\qquad\le C_{T,\Omega,\beta}\norm{u_q}_{\XT}.
 \end{aligned}
\]
This establishes \(\Phi_T(q)\in Y_T\). Proposition~\ref{prop:direct}
then gives the uniform bound \eqref{eq:measurement-bound}.
\end{proof}

\begin{remark}
For \(q_1,q_2\in\Qad\), the discrepancy between the corresponding filtered
measurements is defined by
\begin{equation}\label{eq:data-discrepancy}
 \Dt
 =
 \|\Phi_T(q_1)-\Phi_T(q_2)\|_{Y_T}.
\end{equation}

To compare this quantity with the unfiltered lateral Cauchy data, set
\(w=u_{q_1}-u_{q_2}\) and define
\begin{equation}\label{eq:unfiltered-data-discrepancy}
\begin{aligned}
 \mathcal E_T
 ={}&
 \|w|_{\Sigma_T}\|_
 {W^{4,2}(0,T;H^{1/2}(\partial\Omega))}
 \\
 &+
 \|\partial_\nu w|_{\Sigma_T}\|_
 {W^{3,2}(0,T;H^{-1/2}(\partial\Omega))}.
\end{aligned}
\end{equation}
Proposition~\ref{prop:direct} and the trace estimates imply
\(\mathcal E_T<\infty\). The temporal operator \(D_\beta\) defined
in \eqref{eq:Dbeta} maps the Dirichlet trace space in
\eqref{eq:unfiltered-data-discrepancy} continuously into
\(W^{2,2}(0,T;H^{1/2}(\partial\Omega))\), and maps the Neumann
trace space into \(W^{1,2}(0,T;H^{-1/2}(\partial\Omega))\).
The zero initial traces in \eqref{eq:trace-compat} allow the causal
extensions constructed in the proof of Proposition~\ref{prop:measurement},
and therefore
\begin{equation}\label{eq:filtered-by-unfiltered}
 \Dt\le C_{\beta,T}\mathcal E_T,
\end{equation}
where \(C_{\beta,T}>0\) depends only on \(\beta\) and \(T\).
The stability estimate \eqref{eq:main-stab-unfiltered}, expressed in terms of
\(\mathcal E_T\), follows from Theorem~\ref{thm:main} and
\eqref{eq:filtered-by-unfiltered}.
\end{remark}


\subsection{Carleman geometry and the main theorem}

The direction \(\omega\) fixed in the choice of the initial excitation
\eqref{eq:affine-f} also determines the geometry of the Carleman weight.
Fix a smooth bounded domain \(\Omega_2\) such that
\begin{equation}\label{eq:Omega2}
D\Subset\Omega\Subset\Omega_2,
\end{equation}
with \(\Omega_2\) large enough to contain the supports of the spatial
cutoffs used later in Section~\ref{sec:localization-reduction}. Since
\(\Omega_2\) is bounded, one may choose
\(x_0\notin\overline{\Omega_2}\) sufficiently far in the direction
\(-\omega\) so that the function
\begin{equation}\label{eq:psi}
 \psi(x)=|x-x_0|^2
\end{equation}
is strictly increasing in the direction \(\omega\) throughout \(D\).
More precisely, there exists \(c_*>0\) such that
\begin{equation}\label{eq:direction}
 \partial_\omega\psi(x)
 =\omega\cdot\nabla\psi(x)
 \ge c_*,
 \qquad x\in D.
\end{equation}
The directional condition \eqref{eq:direction} ensures that the Carleman
weight is strictly increasing in the direction \(\omega\) on \(D\), a
property used in Lemma~\ref{lem:transport} to recover \(r\) from
\(\partial_\omega r\).

For \(0<\gamma<1\), define the space--time Carleman weight
\begin{equation}\label{eq:phi}
 \phi(x,t)=\psi(x)-\gamma t^2.
\end{equation}
Choose \(\delta\in(0,T/4)\) and introduce the weight levels
\begin{equation}\label{eq:weight-levels}
 m_D=\min_{\overline D}\psi,
 \qquad
 M_\psi=\max_{\overline{\Omega_2}}\psi,
 \qquad
 d_*=m_D-M_\psi+\gamma(T-2\delta)^2.
\end{equation}
The observation time $T$ is assumed to satisfy
\begin{equation}\label{eq:long-time}
 d_*>0,
\end{equation}
or equivalently,
\[
 T>
 2\delta+
 \sqrt{\frac{M_\psi-m_D}{\gamma}}.
\]
Indeed, by \eqref{eq:phi} and \eqref{eq:weight-levels},
\begin{equation}\label{eq:weight-gap}
 \min_{x\in\overline D}\phi(x,0)
 -
 \max_{\substack{x\in\overline{\Omega_2}\\ T-2\delta\le t\le T}}
 \phi(x,t)
 \ge d_*>0.
\end{equation}
Thus \eqref{eq:long-time} creates a strictly positive gap between the
initial Carleman weight on the unknown region \(D\) and the weight on the
terminal localization region. This gap produces the exponentially decaying
terminal remainder used in the proof of Theorem~\ref{thm:main}.

To state the stability estimate, it is convenient to collect the fixed
a priori bounds into the quantity
\begin{equation}\label{eq:Mstar}
 \Mdir
 =
 1+C_{\rm dir}\norm{f}_{H^6(\R^3)}
 +2M|D|^{1/2},
\end{equation}
where $C_{\rm dir}$ is a positive constant given by \eqref{eq:direct-est}, $M$ is the admissible parameter appearing in \eqref{eq:Qad}, and $|D|$ is the measure of $D$.  By Proposition~\ref{prop:direct} and the definition of the admissible class
\(\Qad\) in \eqref{eq:Qad}, we have
\begin{equation}\label{eq:Mstar-bounds}
 \norm{u_q}_{\XT}
 \le \Mdir,
 \qquad q\in\Qad,
\end{equation}
and, for any \(q_1,q_2\in\Qad\),
\begin{equation}\label{eq:r-apriori-Mdir}
 \norm{q_1-q_2}_{L^2(D)}
 \le
 |D|^{1/2}\norm{q_1-q_2}_{L^\infty(D)}
 \le
 2M|D|^{1/2}
 \le\Mdir.
\end{equation}
In particular, \(\Mdir\) depends only on the fixed a priori data and not on
the particular admissible coefficients being compared.

The main result can now be stated in terms of the measurement discrepancy
\(\Dt\) defined in \eqref{eq:data-discrepancy}.

\begin{theorem}\label{thm:main}
Assume \eqref{eq:affine-f}, \eqref{eq:direction}, and
\eqref{eq:long-time}. Then there exist \(C>0\) and
\(\kappa\in(0,1)\), depending only on the fixed a priori parameters,
such that, for all \(q_1,q_2\in\Qad\),
\begin{equation}\label{eq:main-stab}
 \norm{q_1-q_2}_{L^2(D)}
 \le
 C\Mdir^{1-\kappa}\Dt^\kappa.
\end{equation}
Since the filtered and unfiltered discrepancies satisfy
\eqref{eq:filtered-by-unfiltered}, it follows that
\begin{equation}\label{eq:main-stab-unfiltered}
 \norm{q_1-q_2}_{L^2(D)}
 \le
 C\Mdir^{1-\kappa}\mathcal E_T^\kappa,
\end{equation}
where \(\mathcal E_T\) is defined in \eqref{eq:unfiltered-data-discrepancy}. 
In particular, equality of the corresponding boundary measurements implies
\(q_1=q_2\) in \(D\).
\end{theorem}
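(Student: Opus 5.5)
The plan is the Bukhgeim--Klibanov scheme, adapted to the principal memory term and the affine excitation. Set \(r=q_1-q_2\), \(w=u_{q_1}-u_{q_2}\), and apply \(\partial_t+\beta\) to \eqref{eq:memory} to obtain the local third-order operator
\[
 L_q u=(\partial_t+\beta)(u_{tt}-\Delta u)+\beta\diver(q\nabla u).
\]
Subtracting the equations for \(q_1\) and \(q_2\) gives
\[
 L_{q_1}w=-\beta\diver\bigl(r\nabla u_{q_2}\bigr).
\]
I would differentiate once more in time, setting \(v=\partial_t w\) (or \(v=D_\beta w\), to match the filtered data), and then localize with cutoffs \(\chi(x)\) equal to one near \(\overline\Omega\) and \(\eta(t)\) equal to one on \([0,T-2\delta]\). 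The cutoff commutators are supported either near \(\partial\Omega\), where they are bounded by \(\Dt\) through an exterior energy or unique continuation argument, or in \(t\in[T-2\delta,T]\). The source becomes \(\diver G\) with vector field
\[
 G=-\beta\chi\eta\, r\nabla u_{q_2,t}.
\]
The initial trace of the differentiated field is then the given one. Indeed, by \eqref{eq:forward-initial-traces}, \(w_{tt}(0)=0\) and \(w_{ttt}(0)=-\beta\diver(r\nabla f)\). Since \(\nabla f=\omega\) near \(\overline D\), this equals \(h=-\beta\partial_\omega r\).

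Next I would apply the acoustic Carleman estimate (Proposition~\ref{prop:acoustic}) with weight \(e^{s\phi}\) to the localized field. It controls the field in terms of \(\|e^{s\phi}G\|\) and \(\|e^{s\phi}G_t\|\) only, without \(\nabla G\), so no \(\nabla r\) appears. The initial trace estimate (Proposition~\ref{prop:initial-trace}) then gives
\[
 s\int_D e^{2s\phi(x,0)}\abs{\partial_\omega r}^2\dd x
 \le C\Bigl(\|e^{s\phi}G\|^2+\|e^{s\phi}G_t\|^2\Bigr)+\text{cutoff terms}.
\]
To bound the right-hand side I would use the quadratic vanishing \eqref{eq:time-vanish}. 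Since \(u_{q_2,t}(0)=0\) and \(u_{q_2,tt}(0)=\Delta f=0\) in \(D\), we have \(\abs{\nabla u_{q_2,t}(x,t)}\le Ct^2\mathcal M_*\) there. Combined with the factor \(e^{-2s\gamma t^2}\), this yields the coefficient source bound \eqref{eq:Fcoef}:
\[
 \|e^{s\phi}G\|^2+\|e^{s\phi}G_t\|^2\le C s^{-1/2}\int_D e^{2s\phi(x,0)}\abs{r}^2\dd x
\]
(or some power \(s^{-\epsilon}\)). The transport estimate (Lemma~\ref{lem:transport}) uses \eqref{eq:direction} and \(r=0\) near \(\partial D\). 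Integrating \(\partial_\omega(e^{2s\psi}r^2)\) along \(\omega\)-lines gives
\[
 s\int_D e^{2s\psi}r^2\le C s^{-1}\int_D e^{2s\psi}\abs{\partial_\omega r}^2 .
\]
Hence the left-hand side dominates \(s^2\int e^{2s\psi}r^2\), which absorbs the source term for \(s\ge s_0\). No smallness of \(r\) or sign condition on \(r\) is needed here.

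The remaining terms are handled by the weight gap. The terminal terms are bounded by \(Ce^{2s(\max\phi\text{ on }[T-2\delta,T])}\Mdir^2\). The boundary terms are bounded by \(Ce^{Cs}\Dt^2\). Dividing by \(e^{2sm_D}\) and using \eqref{eq:weight-gap}, I expect
\[
 \|r\|_{L^2(D)}^2\le C\bigl(e^{Cs}\Dt^2+e^{-2sd_*}\Mdir^2\bigr),\qquad s\ge s_0 .
\]
If \(\Dt\ll\Mdir\), I would choose \(s\) so that the two terms balance, \(e^{(C+2d_*)s}=\Mdir^2/\Dt^2\). Otherwise I would take \(s=s_0\) and use \eqref{eq:r-apriori-Mdir}. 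This gives \eqref{eq:main-stab} with \(\kappa=2d_*/(C+2d_*)\). Then \eqref{eq:main-stab-unfiltered} follows from \eqref{eq:filtered-by-unfiltered}, and uniqueness follows by setting \(\Dt=0\).

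The main obstacle is the absorption step. One has to check that the powers of \(s\) in Proposition~\ref{prop:initial-trace}, the gain from \(t^2e^{-2s\gamma t^2}\) (roughly \(s^{-5/2}\) after time integration), and the loss in the transport lemma combine favorably. The term \(G_t\) loses one power of \(t\), since \(\nabla u_{q_2,tt}\) is only \(O(t)\). I would first verify this bookkeeping, choosing whether to differentiate \(w\) once or twice so that the leftover power of \(s\) is negative.
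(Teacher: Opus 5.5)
Your proposal follows the paper's proof essentially step for step: the third-order reduction, $v=w_t$ with $v_{tt}(\cdot,0)=-\beta\partial_\omega r$, exterior-energy control of the spatial cutoff terms by $\Dt$, Propositions~\ref{prop:acoustic} and~\ref{prop:initial-trace}, the quadratic temporal vanishing, Lemma~\ref{lem:transport}, the weight gap \eqref{eq:weight-gap}, and the same balancing choice of $s$. The one error is in the bookkeeping you flagged: Proposition~\ref{prop:initial-trace} gives $\norm{e^{s\psi}h}_{L^2}^2\le Cs(\mathcal F_s+\mathcal G_s+\mathcal L_s)$, so it loses a factor $s$ rather than gaining one as you wrote, but the absorption still closes with a single time derivative, because Lemma~\ref{lem:transport} puts $s^2\norm{e^{s\psi}r}_{L^2(D)}^2$ on the left while the coefficient source contributes only $s\cdot O(s^{-3/2})=O(s^{-1/2})$ times $\Mdir^2\norm{e^{s\psi}r}_{L^2(D)}^2$ (the $s^{-3/2}$ rate, sharper than your $s^{-1/2}$, comes from $\abs{\partial_tF^{\rm coef}}\le C\Mdir t\abs{r}$).
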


The proof of Theorem~\ref{thm:main} occupies
Sections~\ref{sec:localization-reduction}--\ref{sec:main-proof}.

\begin{remark}\label{rem:stability-scope}
The estimates in Theorem~\ref{thm:main} hold uniformly for every pair
\(q_1,q_2\in\Qad\). In particular, neither coefficient is required to lie
in a sufficiently small neighborhood of a fixed reference coefficient, and
no ordering condition such as \(q_1\le q_2\) or \(q_2\le q_1\) is imposed.
Thus the coefficient difference \(q_1-q_2\) may change sign in \(D\).

The H\"older exponent \(\kappa\in(0,1)\) depends only on the fixed a priori
parameters and the Carleman geometry. It is determined by the balance between
the exponentially increasing data term and the exponentially decaying
terminal remainder in the final stability estimate; its explicit value is
given in \eqref{eq:optimization-parameters}. The argument does not address
the optimality of this exponent.

The estimate \eqref{eq:main-stab} is formulated in terms of the filtered
measurement discrepancy \(\Dt\) defined in
\eqref{eq:data-discrepancy}. This formulation is natural for the proof because
the temporal operator \(D_\beta\) introduced in \eqref{eq:Dbeta} produces the
boundary field that enters the localization and Carleman arguments. Since the
filtered data are obtained from the original lateral Cauchy measurement
\(\mathcal C_T(q)\) defined in \eqref{eq:original-cauchy-data} by this fixed
temporal post-processing, the comparison
\eqref{eq:filtered-by-unfiltered} also yields the stability estimate
\eqref{eq:main-stab-unfiltered} in terms of the original Cauchy traces.

The discrepancy \(\mathcal E_T\) in
\eqref{eq:unfiltered-data-discrepancy} measures these original traces in the
specified temporal Sobolev norms. Thus the regularity of the measured
boundary data is part of the stability framework.
\end{remark}


\section{The forward problem}\label{sec:direct-proof}

We prove Proposition~\ref{prop:direct} for \(q\in\Qad\) in the
strong-solution space \(\XT\) defined in \eqref{eq:Xclass}. The uniform
estimate \eqref{eq:direct-est} justifies the initial traces in
\eqref{eq:forward-initial-traces} and the source estimates in
Section~\ref{sec:nonlinear}.

At the highest spatial order, a direct energy estimate would require an
additional derivative of \(u_t\), beyond the regularity in \(\XT\).
We instead integrate the top-order commutator pairing by parts in time
and include the resulting mixed term in a modified energy. The divergence
form in \eqref{eq:memory-system} ensures that \(C_\alpha\) in
\eqref{eq:Calpha} involves derivatives of \(q\) of order at most five,
as permitted by \eqref{eq:Qad}. We then construct the solution by
Fourier spectral approximation and passage to the limit.

\begin{proof}[Proof of Proposition~\ref{prop:direct}]

To express the memory term in \eqref{eq:memory} through a local system, introduce
\begin{equation}\label{eq:relaxed-variables}
\begin{aligned}
 z(x,t)&=\int_0^t\beta e^{-\beta(t-\tau)}u(x,\tau)\dd\tau,\\
 \xi(x,t)&=u(x,t)-z(x,t).
\end{aligned}
\end{equation}
Differentiating the convolution in \eqref{eq:relaxed-variables} yields
\begin{equation}\label{eq:z-ode}
 z_t+\beta z=\beta u,
 \qquad z(\cdot,0)=0.
\end{equation}
Consequently, \(\xi=u-z\) satisfies
\begin{equation}\label{eq:xi-ode}
 \xi_t+\beta\xi=u_t,
 \qquad \xi(\cdot,0)=f.
\end{equation}
Substituting \(z=u-\xi\) into \eqref{eq:memory} gives
\begin{equation}\label{eq:memory-system}
 u_{tt}
 -\diver((1-q)\nabla u)
 -\diver(q\nabla\xi)=0.
\end{equation}
We derive the estimates \eqref{eq:spatial-direct} and
\eqref{eq:time-direct} for the coupled equations \eqref{eq:xi-ode}
and \eqref{eq:memory-system}. In Step~4, we construct a solution
\((u,\xi)\) of these equations and prove that \(u\) satisfies the
original memory equation \eqref{eq:memory}.

The spatial estimate \eqref{eq:spatial-direct} and the time-derivative
estimate \eqref{eq:time-direct} are first derived for smooth solutions.
Step~4 establishes these estimates for the solution constructed in the
class \(\XT\) defined in \eqref{eq:Xclass}.

\medskip
\noindent\emph{Step 1: High-order energy identities.}
For \(|\alpha|\le5\), set
\[
 U_\alpha=D^\alpha u,
 \qquad
 X_\alpha=D^\alpha\xi,
\]
and define the commutator field
\begin{equation}\label{eq:Calpha}
 C_\alpha
 =
 [D^\alpha,1-q]\nabla u
 +
 [D^\alpha,q]\nabla\xi.
\end{equation}
Differentiating the coupled equation \eqref{eq:memory-system} gives
\begin{equation}\label{eq:differentiated-memory}
 (U_\alpha)_{tt}
 -\diver((1-q)\nabla U_\alpha)
 -\diver(q\nabla X_\alpha)
 =\diver C_\alpha.
\end{equation}
Since
\([D^\alpha,1-q]=-[D^\alpha,q]\), and since
\eqref{eq:xi-ode} gives \(\xi_t-u_t=-\beta\xi\),
\[
 C_\alpha=[D^\alpha,q]\nabla(\xi-u),
 \qquad
 (C_\alpha)_t=-\beta[D^\alpha,q]\nabla\xi.
\]
In each commutator in \eqref{eq:Calpha}, at least one spatial derivative
falls on \(q\). The remaining derivatives on \(u\) or \(\xi\), including
the displayed gradient, therefore have total order at most five. The Leibniz
rule and the bound on \(q\) in \eqref{eq:Qad} yield
\begin{equation}\label{eq:Cbound}
\begin{aligned}
 \norm{C_\alpha}_{L^2}
 &\le
 CM\bigl(\norm u_{H^5}+\norm\xi_{H^5}\bigr),
 &&|\alpha|\le5,\\
 \norm{(C_\alpha)_t}_{L^2}
 &\le
 C\beta M\norm\xi_{H^5},
 &&|\alpha|=5.
\end{aligned}
\end{equation}
Thus the commutator estimates require derivatives of \(q\) of order at most
five.

For each \(|\alpha|\le5\), introduce the energy
\begin{equation}\label{eq:component-energy}
 E_\alpha(t)
 =
 \frac12\norm{(U_\alpha)_t}_{L^2}^2
 +\frac12\int_{\R^3}(1-q)\abs{\nabla U_\alpha}^2\dd x
 +\frac12\int_{\R^3}q\abs{\nabla X_\alpha}^2\dd x.
\end{equation}
Testing \eqref{eq:differentiated-memory} by \((U_\alpha)_t\) and using
\[
 \nabla(U_\alpha)_t
 =
 \nabla(X_\alpha)_t+\beta\nabla X_\alpha,
\]
which follows from \eqref{eq:xi-ode}, we can deduce that
\begin{equation}\label{eq:Ealpha-pre}
 \frac{\mathrm d}{\mathrm dt}E_\alpha(t)
 +\beta\int_{\R^3}q\abs{\nabla X_\alpha}^2\dd x
 =
 -\int_{\R^3}
 C_\alpha\cdot\nabla(U_\alpha)_t\dd x.
\end{equation}

For \(|\alpha|\le4\), the commutator estimate \eqref{eq:Cbound} controls
the right-hand side of \eqref{eq:Ealpha-pre} directly:
\begin{equation}\label{eq:low-commutator-pairing}
 \left|
 \int_{\R^3}C_\alpha\cdot\nabla(U_\alpha)_t\dd x
 \right|
 \le
 CM\bigl(\norm u_{H^5}+\norm\xi_{H^5}\bigr)
 \norm{u_t}_{H^5}.
\end{equation}
At the top order \(|\alpha|=5\), however, a direct estimate would require
\(\nabla(U_\alpha)_t\), and hence an \(H^6\)-bound for \(u_t\), which is not
available in the solution class \(\XT\). To avoid this loss of one spatial
derivative, integrate the top-order term by parts in time:
\begin{equation}\label{eq:time-ibp-top}
 -\int_{\R^3}C_\alpha\cdot\nabla(U_\alpha)_t\dd x
 =
 -\frac{\mathrm d}{\mathrm dt}
 \int_{\R^3}C_\alpha\cdot\nabla U_\alpha\dd x
 +
 \int_{\R^3}(C_\alpha)_t\cdot\nabla U_\alpha\dd x.
\end{equation}
By \eqref{eq:Cbound} and the Cauchy--Schwarz inequality, we obtain
\begin{equation}\label{eq:top-commutator-pairing}
 \left|
 \int_{\R^3}(C_\alpha)_t\cdot\nabla U_\alpha\dd x
 \right|
 \le
 C\beta M\norm\xi_{H^5}\norm u_{H^6}.
\end{equation}
Moreover, the identity
\(C_\alpha=[D^\alpha,q]\nabla(\xi-u)\), obtained from
\eqref{eq:Calpha}, and the initial conditions \(u(0)=\xi(0)=f\)
give
\begin{equation}\label{eq:commutator-initial-zero}
 C_\alpha(\cdot,0)=0.
\end{equation}

\medskip
\noindent\emph{Step 2: Modified energy and spatial estimates.}
The time integration by parts in \eqref{eq:time-ibp-top} produces a mixed
commutator term. Incorporating this term into the energy, we remove the
top-order derivative loss identified in Step~1. Define
\begin{equation}\label{eq:modified-energy}
 \mathcal E_5(t)
 =
 \sum_{|\alpha|\le5}E_\alpha(t)
 +
 \sum_{|\alpha|=5}
 \int_{\R^3}C_\alpha\cdot\nabla U_\alpha\dd x
 +
 K_{\rm en}
 \bigl(\norm u_{H^5}^2+\norm\xi_{H^5}^2\bigr).
\end{equation}
The mixed term in \eqref{eq:modified-energy} satisfies
\begin{equation}\label{eq:mixed-energy-bound}
 \left|
 \sum_{|\alpha|=5}
 \int_{\R^3}C_\alpha\cdot\nabla U_\alpha\dd x
 \right|
 \le
 \varepsilon\norm u_{H^6}^2
 +
 C_\varepsilon M^2
 \bigl(\norm u_{H^5}^2+\norm\xi_{H^5}^2\bigr).
\end{equation}
By the bounds \(q\ge0\) and \(1-q\ge c_0\) in \eqref{eq:Qad},
\eqref{eq:component-energy} yields
\[
\begin{aligned}
 \sum_{|\alpha|\le5}E_\alpha(t)
 \ge{}&\frac12\sum_{|\alpha|\le5}\norm{D^\alpha u_t}_{L^2}^2
 +\frac{c_0}{2}\sum_{|\alpha|\le5}\norm{\nabla D^\alpha u}_{L^2}^2\\
 &+\frac12\sum_{|\alpha|\le5}
 \norm{\sqrt q\,\nabla D^\alpha\xi}_{L^2}^2.
\end{aligned}
\]
Since
\(\norm u_{H^5}^2+
\sum_{|\alpha|\le5}\norm{\nabla D^\alpha u}_{L^2}^2
\asymp\norm u_{H^6}^2\), choose
\(\varepsilon>0\) in \eqref{eq:mixed-energy-bound} small enough to
absorb the mixed term into the gradient energy. Then choose
\(K_{\rm en}=K_{\rm en}(M,c_0)\) large enough to absorb the remaining
lower-order contribution. The bounds \eqref{eq:Cbound} and \eqref{eq:mixed-energy-bound}
also give the upper bound for \(\mathcal E_5(t)\), so
\begin{equation}\label{eq:energy-equivalence}
 \mathcal E_5(t)\asymp
 \norm{u_t}_{H^5}^2
 +\norm u_{H^6}^2
 +\norm\xi_{H^5}^2
 +
 \sum_{|\alpha|\le5}
 \norm{\sqrt q\,\nabla D^\alpha\xi}_{L^2}^2,
\end{equation}
where the equivalence constants depend only on \(M\) and \(c_0\).
The lower-order term in \eqref{eq:modified-energy} supplies the \(L^2\)
component of \(\norm u_{H^6}\). The weighted \(\xi\)-term in
\eqref{eq:energy-equivalence} is used only through its nonnegativity, so no
derivatives of \(\sqrt q\) are required.

The auxiliary equation \eqref{eq:xi-ode} and Young's inequality give
\begin{equation}\label{eq:lower-order-energy-derivative}
 \frac{\mathrm d}{\mathrm dt}
 \bigl(
 \norm u_{H^5}^2+\norm\xi_{H^5}^2
 \bigr)
 \le
 C(\beta)
 \bigl(
 \norm{u_t}_{H^5}^2
 +\norm u_{H^5}^2
 +\norm\xi_{H^5}^2
 \bigr).
\end{equation}
Combining
\eqref{eq:Ealpha-pre},
\eqref{eq:low-commutator-pairing},
\eqref{eq:time-ibp-top},
\eqref{eq:top-commutator-pairing},
\eqref{eq:Cbound}, and
\eqref{eq:lower-order-energy-derivative} yields
\begin{equation}\label{eq:energy-gronwall}
 \mathcal E_5'(t)
 +
 \beta\sum_{|\alpha|\le5}
 \norm{\sqrt q\,\nabla D^\alpha\xi}_{L^2}^2
 \le
 C(M,c_0,\beta)\mathcal E_5(t).
\end{equation}
Equation \eqref{eq:commutator-initial-zero} gives
\[
 \mathcal E_5(0)
 \le
 C(M,c_0)\norm f_{H^6}^2.
\]
Applying Gronwall's inequality to \eqref{eq:energy-gronwall} and using the
energy equivalence \eqref{eq:energy-equivalence}, we get that
\begin{equation}\label{eq:spatial-direct}
 \sup_{0\le t\le T}
 \left(
 \norm{u(t)}_{H^6}
 +\norm{u_t(t)}_{H^5}
 +\norm{\xi(t)}_{H^5}
 \right)
 \le
 C(T,\beta,M,c_0)\norm f_{H^6}.
\end{equation}
The constant in \eqref{eq:spatial-direct} depends on the a priori bound
\(M\), but no smallness assumption on \(M\) is used.

Finally, according to the convolution representation of \(z\) in
\eqref{eq:relaxed-variables}, together with
\eqref{eq:spatial-direct}, we know that
\begin{equation}\label{eq:xi-H6-bound}
 \norm{\xi}_{L^\infty(0,T;H^6)}
 \le
 C(T,\beta,M,c_0)\norm f_{H^6}.
\end{equation}

\medskip
\noindent\emph{Step 3: Time-derivative estimates.}
The spatial estimate \eqref{eq:spatial-direct} provides the regularity needed
to bootstrap the time derivatives in the strong-solution class
\(\XT\). For \(2\le m\le6\), the Leibniz rule gives the product estimate
\begin{equation}\label{eq:divergence-product-estimate}
 \norm{\diver(b\nabla v)}_{H^{m-2}}
 \le
 C\norm b_{W^{m-1,\infty}}\norm v_{H^m},
\end{equation}
for \(b\in W^{m-1,\infty}(\R^3)\) and
\(v\in H^m(\R^3)\).

Since the coefficient \(q\) defined in \eqref{eq:Qad} is independent of
time, differentiating the coupled equation \eqref{eq:memory-system},  we can derive that
\begin{equation}\label{eq:time-derivative-recursion}
 \partial_t^{j+2}u
 =
 \diver\bigl((1-q)\nabla\partial_t^ju\bigr)
 +
 \diver\bigl(q\nabla\partial_t^j\xi\bigr),
 \qquad 0\le j\le3.
\end{equation}
The recursion \eqref{eq:time-derivative-recursion} is applied in increasing
order of \(j\). The bounds
\eqref{eq:spatial-direct} and \eqref{eq:xi-ode} first control
\(u,u_t,\xi,\xi_t\) in \(L^\infty(0,T;H^5)\).
Applying \eqref{eq:divergence-product-estimate} with \(m=5\) to the cases
\(j=0,1\) in \eqref{eq:time-derivative-recursion} yields
\[
 u_{tt},u_{ttt}
 \in L^\infty(0,T;H^3).
\]
From the identities
\begin{equation}\label{eq:xi-time-recursion}
 \xi_{tt}=u_{tt}-\beta\xi_t,
 \qquad
 \xi_{ttt}=u_{ttt}-\beta\xi_{tt},
\end{equation}
which follow from \eqref{eq:xi-ode}, we obtain
\[
 \xi_{tt},\xi_{ttt}\in L^\infty(0,T;H^3(\R^3)).
\]
Finally, using the cases \(j=2,3\) in
\eqref{eq:time-derivative-recursion} together with
\eqref{eq:divergence-product-estimate} for \(m=3\), we obtain that
\[
 u^{(4)},u^{(5)}
 \in L^\infty(0,T;H^1).
\]
Consequently,
\begin{equation}\label{eq:time-direct}
\begin{aligned}
 &\operatorname*{ess\,sup}_{0<t<T}
 \bigl(
 \norm{u_{tt}(t)}_{H^3}
 +\norm{u_{ttt}(t)}_{H^3}
 +\norm{u^{(4)}(t)}_{H^1}
 +\norm{u^{(5)}(t)}_{H^1}
 \bigr)
 \\
 &\qquad\le
 C(T,\beta,M,c_0)\norm f_{H^6}.
\end{aligned}
\end{equation}
The estimates \eqref{eq:spatial-direct} and \eqref{eq:time-direct} give
\eqref{eq:direct-est} for smooth solutions.

The full \(W^{5,\infty}(\R^3)\)-regularity imposed on \(q\) in
\eqref{eq:Qad} is needed only for the top-order spatial commutator estimates
in Step~1. Indeed, when \(|\alpha|=5\), the commutator
\(C_\alpha\) defined in \eqref{eq:Calpha} contains derivatives of \(q\) of
order up to five. By contrast, the time-derivative bootstrap in the present
step uses \eqref{eq:divergence-product-estimate} with \(m=5\) for
\(j=0,1\), and hence requires at most \(q\in W^{4,\infty}(\R^3)\); the
cases \(j=2,3\), where \(m=3\), require only
\(q\in W^{2,\infty}(\R^3)\). Thus the \(W^{5,\infty}\)-assumption is
dictated by the highest-order spatial energy estimate rather than by the
time-derivative estimates.

\medskip
\noindent\emph{Step 4: Fourier cutoff approximation and passage to the
limit.}
We construct solutions of the coupled system
\eqref{eq:xi-ode}--\eqref{eq:memory-system} and pass the estimates
\eqref{eq:spatial-direct} and \eqref{eq:time-direct} to the limit.
For each integer \(N\ge1\), let \(\mathcal P_N\) be the orthogonal
Fourier projection onto \(\{|\zeta|\le N\}\), and set
\[
 H_N=\mathcal P_NL^2(\R^3),\qquad
 \mathcal H_N=H_N\times H_N\times H_N.
\]
The projection \(\mathcal P_N\) is self-adjoint, commutes with spatial
derivatives, and is a contraction on every \(H^m(\R^3)\). Moreover,
\begin{align}\label{eq:hNbound-step4}
	 \mathcal P_Nf\longrightarrow f
 \quad\text{in }H^6(\R^3),
 \qquad
 \norm{\varphi_N}_{H^m}
 \le(1+N^2)^{m/2}\norm{\varphi_N}_{L^2}
 \quad(\varphi_N\in H_N).
\end{align}
Thus \(H_N\) consists of band-limited functions belonging to every
spatial Sobolev space.

For \(U=(u,\upsilon,\xi)\in\mathcal H_N\), define
\[
 \mathcal A_NU
 =
 \begin{pmatrix}
  \upsilon\\[1mm]
  \mathcal P_N\diver((1-q)\nabla u)
  +\mathcal P_N\diver(q\nabla\xi)\\[1mm]
  \upsilon-\beta\xi
 \end{pmatrix}.
\]
The Fourier support condition gives
\(\norm{\nabla\varphi_N}_{L^2}\le N\norm{\varphi_N}_{L^2}\) and
\(\norm{\Delta\varphi_N}_{L^2}\le N^2\norm{\varphi_N}_{L^2}\)
for \(\varphi_N\in H_N\).
Consequently, the coefficient bounds in \eqref{eq:Qad} imply
\[
 \norm{\mathcal P_N\diver(q\nabla\varphi_N)}_{L^2}
 \le
 \bigl(N^2\norm q_{L^\infty}
       +N\norm{\nabla q}_{L^\infty}\bigr)\norm{\varphi_N}_{L^2},
 \qquad \varphi_N\in H_N.
\]
Replacing \(q\) by \(1-q\) in the preceding inequality gives
the same type of bound for \(\mathcal P_N\diver((1-q)\nabla\varphi_N)\).
Thus \(\mathcal A_N\in\mathcal L(\mathcal H_N)\) for each fixed \(N\).
The bounded-operator semigroup theorem
\cite[Section~1.1, Theorem~1.2]{Pazy1983} therefore gives a unique
global solution of
\[
 U_N'(t)=\mathcal A_NU_N(t),
 \qquad
 U_N(0)=(\mathcal P_Nf,0,\mathcal P_Nf).
\]
Writing \(U_N=(u_N,(u_N)_t,\xi_N)\), we obtain
\begin{equation}\label{eq:spectral-system}
\begin{cases}
 (u_N)_{tt}
 -\mathcal P_N\diver((1-q)\nabla u_N)
 -\mathcal P_N\diver(q\nabla\xi_N)=0,\\
 (\xi_N)_t+\beta\xi_N=(u_N)_t,\\
 u_N(\cdot,0)=\xi_N(\cdot,0)=\mathcal P_Nf,
 \qquad (u_N)_t(\cdot,0)=0.
\end{cases}
\end{equation}
The operator norm of \(\mathcal A_N\) may depend on \(N\); this does
not affect the uniform estimates derived next. For each fixed \(N\),
\eqref{eq:spectral-system} is smooth in time with band-limited spatial
components, so the energy calculations in Steps~1--3 are justified.

We verify that the spatial and time-derivative estimates \eqref{eq:spatial-direct} and \eqref{eq:time-direct} hold for the approximate solutions \((u_N,\xi_N)\), with constants independent of \(N\). For each multi-index \(\alpha\), set
\[
U_{N,\alpha}=D^\alpha u_N,\qquad
X_{N,\alpha}=D^\alpha\xi_N,
\]
and define
\[
C_{N,\alpha}
=[D^\alpha,1-q]\nabla u_N
+[D^\alpha,q]\nabla\xi_N.
\]
Since \(D^\alpha\) commutes with \(\mathcal P_N\), differentiating
\eqref{eq:spectral-system} gives
\begin{equation}\label{eq:projected-differentiated-system}
 (U_{N,\alpha})_{tt}
 -\mathcal P_N\diver((1-q)\nabla U_{N,\alpha})
 -\mathcal P_N\diver(q\nabla X_{N,\alpha})
 =\mathcal P_N\diver C_{N,\alpha}.
\end{equation}
For \(a\in H^{-1}(\R^3)\) and \(b\in H_N\cap H^1(\R^3)\),
self-adjointness and \(\mathcal P_Nb=b\) give
\begin{equation}\label{eq:projection-duality}
 \langle\mathcal P_Na,b\rangle_{H^{-1},H^1}
 =\langle a,b\rangle_{H^{-1},H^1}.
\end{equation}
In particular, all projections in
\eqref{eq:projected-differentiated-system} disappear when it is tested
against \((U_{N,\alpha})_t\in H_N\). The commutator identities
\eqref{eq:Calpha}--\eqref{eq:Cbound} therefore apply with
\((u,\xi,C_\alpha)\) replaced by
\((u_N,\xi_N,C_{N,\alpha})\). Since
\(u_N(0)=\xi_N(0)\), the initial commutator vanishes as in
\eqref{eq:commutator-initial-zero}. The top-order time integration by
parts \eqref{eq:time-ibp-top}, the modified-energy estimate
\eqref{eq:energy-gronwall}, and the time-derivative recursion
\eqref{eq:time-derivative-recursion} consequently have the same
constants as in Steps~1--3. For the time-derivative recursion
\eqref{eq:time-derivative-recursion}, we also use that \(\mathcal P_N\)
is a contraction on Sobolev spaces. In
particular,
\begin{equation}\label{eq:uniform-spectral-direct}
 \norm{u_N}_{\XT}
 \le C_{\rm dir}\norm{\mathcal P_Nf}_{H^6}
 \le C_{\rm dir}\norm f_{H^6},
 \qquad N\ge1.
\end{equation}

For completeness, set \(z_N=u_N-\xi_N\). The second equation in
\eqref{eq:spectral-system} implies
\( (z_N)_t+\beta z_N=\beta u_N\), with \(z_N(\cdot,0)=0\). Hence
\[
 z_N(x,t)=\int_0^t\beta e^{-\beta(t-\tau)}u_N(x,\tau)\dd\tau.
\]
The spatial bound in \eqref{eq:spatial-direct} therefore also yields
\begin{equation}\label{eq:xiN-H6}
 \sup_{N\ge1}
 \norm{\xi_N}_{L^\infty(0,T;H^6(\R^3))}
 \le C(T,\beta,M,c_0)\norm f_{H^6(\R^3)}.
\end{equation}
The estimates \eqref{eq:uniform-spectral-direct} and \eqref{eq:xiN-H6} hold with constants independent of \(N\). By contrast, the Fourier support bound in \eqref{eq:hNbound-step4} is used only to establish the boundedness of \(\mathcal A_N\) for each fixed \(N\) and to justify the differentiated energy identities. Its \(N\)-dependent constant does not enter the uniform bounds \eqref{eq:uniform-spectral-direct} and \eqref{eq:xiN-H6}.


By weak-\(*\) compactness and the separability of the relevant preduals
\cite[Theorem~3.28 and Corollary~3.30]{Brezis2011}, a subsequence,
not relabeled, and fields \(u,\xi\) can be chosen so that
\[
\begin{aligned}
 u_N&\rightharpoonup^\ast u
 &&\text{in }L^\infty(0,T;H^6(\R^3)),\\
 (u_N)_t&\rightharpoonup^\ast u_t
 &&\text{in }L^\infty(0,T;H^5(\R^3)),\\
 (u_N)_{tt},(u_N)_{ttt}&\rightharpoonup^\ast
 u_{tt},u_{ttt}
 &&\text{in }L^\infty(0,T;H^3(\R^3)),\\
 (u_N)^{(4)},(u_N)^{(5)}&\rightharpoonup^\ast
 u^{(4)},u^{(5)}
 &&\text{in }L^\infty(0,T;H^1(\R^3)),\\
 \xi_N&\rightharpoonup^\ast\xi
 &&\text{in }L^\infty(0,T;H^6(\R^3)).
\end{aligned}
\]
To identify the weak-\(*\) limits of the time derivatives, we observe that
\[
\partial_t (u_N)^{(j)}=(u_N)^{(j+1)},
\qquad 0\le j\le4.
\]
Passing to the limit in these identities in
\(\mathcal D'(\R^3\times(0,T))\) shows that the limiting fields
coincide with \(\partial_t^j u\) for \(1\le j\le5\).
The uniform bounds in \eqref{eq:uniform-spectral-direct} therefore
imply \(u\in\XT\).

The equations in \eqref{eq:spectral-system} also pass to the limit in
distributions. Indeed, for
\(\varphi\in C_c^\infty(\R^3\times(0,T))\),
self-adjointness gives, for example,
\[
 \int_0^T
 \langle\mathcal P_N\diver(q\nabla\xi_N),\varphi\rangle\dd t
 =-\int_0^T\int_{\R^3}
 q\nabla\xi_N\cdot\nabla(\mathcal P_N\varphi)\dd x\dd t.
\]
Since \(\mathcal P_N\varphi\to\varphi\) strongly in
\(L^1(0,T;H^1(\R^3))\), and \(\nabla\xi_N\) is uniformly bounded
and weakly convergent in the corresponding Sobolev spaces, the
right-hand integral in the preceding identity converges to
\(-\int_0^T\!\int_{\R^3}q\nabla\xi\cdot\nabla\varphi\dd x\dd t\).
The term involving \(1-q\) is treated identically. Hence
\begin{equation}\label{eq:limit-memory-system}
 u_{tt}
 -\diver((1-q)\nabla u)
 -\diver(q\nabla\xi)=0
\end{equation}
in \(\mathcal D'(\R^3\times(0,T))\), together with
\begin{equation}\label{eq:limit-xi-ode}
 \xi_t+\beta\xi=u_t.
\end{equation}
Equation \eqref{eq:limit-xi-ode}, together with the weak-\(*\) bounds
for \(u_t\) and \(\xi\), yields
\[
\xi_t=u_t-\beta\xi\in L^\infty(0,T;H^5(\R^3)).
\]
The weak-\(*\) limits obtained above satisfy
\[
u,\xi\in L^\infty(0,T;H^6(\R^3)),
\qquad
u_{tt}\in L^\infty(0,T;H^3(\R^3)).
\]
Since \(q\in W^{5,\infty}(\R^3)\) by \eqref{eq:Qad}, we also have
\[
\diver((1-q)\nabla u),\;
\diver(q\nabla\xi)
\in L^\infty(0,T;H^4(\R^3)).
\]
Consequently, the distributional identity \eqref{eq:limit-memory-system} holds in \(H^3(\R^3)\) for almost every \(t\in(0,T)\).

To recover the initial conditions, we integrate the equations in \eqref{eq:spectral-system} in time. For every \(t\in[0,T]\),
\begin{equation}\label{eq:spectral-integral-identities}
\begin{aligned}
u_N(t)
&=\mathcal P_Nf+\int_0^t (u_N)_t(\tau)\dd\tau
&&\text{in }H^5(\R^3),\\
(u_N)_t(t)
&=\int_0^t (u_N)_{tt}(\tau)\dd\tau
&&\text{in }H^3(\R^3),\\
\xi_N(t)
&=\mathcal P_Nf+
\int_0^t\bigl((u_N)_t(\tau)-\beta\xi_N(\tau)\bigr)\dd\tau
&&\text{in }H^5(\R^3).
\end{aligned}
\end{equation}
Using the weak-\(*\) convergences established above and \(\mathcal P_Nf\to f\) in \(H^6(\R^3)\), we pass to the limit in \eqref{eq:spectral-integral-identities} in the sense of distributions in time and obtain
\begin{equation}\label{eq:limit-integral-identities}
\begin{aligned}
 u(t)
 &=f+\int_0^t u_t(\tau)\dd\tau
 &&\text{in }H^5(\R^3),\\
 u_t(t)
 &=\int_0^t u_{tt}(\tau)\dd\tau
 &&\text{in }H^3(\R^3),\\
 \xi(t)
 &=f+\int_0^t
 \bigl(u_t(\tau)-\beta\xi(\tau)\bigr)\dd\tau
 &&\text{in }H^5(\R^3).
\end{aligned}
\end{equation}
The identities in \eqref{eq:limit-integral-identities} determine
continuous representatives on \([0,T]\) and give
\[
 u(\cdot,0)=\xi(\cdot,0)=f,\qquad u_t(\cdot,0)=0.
\]
Moreover,
\[
\begin{aligned}
 u&\in C([0,T];H^5(\R^3))\cap C_w([0,T];H^6(\R^3)),\\
 u_t&\in C([0,T];H^3(\R^3))\cap C_w([0,T];H^5(\R^3)),\\
 \xi&\in C([0,T];H^5(\R^3))\cap C_w([0,T];H^6(\R^3)).
\end{aligned}
\]
To justify the weak continuity in the higher-order Sobolev spaces, let \(t_n\to t\) in \([0,T]\). The uniform bounds in \(H^6(\R^3)\) for \(u,\xi\) and in \(H^5(\R^3)\) for \(u_t\), together with their strong continuity in \(H^5(\R^3)\) and \(H^3(\R^3)\), respectively, imply
\[
\begin{aligned}
u(t_n)&\rightharpoonup u(t),
\qquad \xi(t_n)\rightharpoonup\xi(t)
&&\text{in }H^6(\R^3),\\
u_t(t_n)&\rightharpoonup u_t(t)
&&\text{in }H^5(\R^3).
\end{aligned}
\]
Weak compactness and the bounds \eqref{eq:uniform-spectral-direct} and
\eqref{eq:xiN-H6} also show that the continuous representatives
\(u(t),\xi(t)\) belong to \(H^6(\R^3)\) and that \(u_t(t)\) belongs
to \(H^5(\R^3)\) for every \(t\in[0,T]\). Evaluating \eqref{eq:limit-integral-identities} at \(t=0\) and using the injective embeddings
\[
H^6(\R^3)\hookrightarrow H^5(\R^3),
\qquad
H^5(\R^3)\hookrightarrow H^3(\R^3),
\]
we conclude that
\[
u(\cdot,0)=\xi(\cdot,0)=f
\quad\text{in }H^6(\R^3),
\qquad
u_t(\cdot,0)=0
\quad\text{in }H^5(\R^3).
\]

Finally, \eqref{eq:time-direct} gives
\[
 u_{tt}\in W^{1,\infty}(0,T;H^3(\R^3)),\qquad
 u_{ttt}\in W^{1,\infty}(0,T;H^1(\R^3)).
\]
Thus \(u_{tt}\) and \(u_{ttt}\) have continuous representatives in
\(H^3\) and \(H^1\), respectively. Also,
\(\xi_t=u_t-\beta\xi\in C([0,T];H^3(\R^3))\).
Consequently, \eqref{eq:limit-memory-system} and its first time
derivative can be evaluated at \(t=0\). Using
\(\xi_t(0)=-\beta f\) and the initial conditions just established
yields the identities \eqref{eq:forward-initial-traces}.

Weak-\(*\) lower semicontinuity now gives
\[
 \norm u_{\XT}
 \le\liminf_{N\to\infty}\norm{u_N}_{\XT}
 \le C_{\rm dir}\norm f_{H^6(\R^3)},
\]
which proves \eqref{eq:direct-est}.   To recover the original memory equation \eqref{eq:memory}, set \(z=u-\xi\). Equation \eqref{eq:limit-xi-ode} and the initial conditions \(u(0)=\xi(0)=f\) imply that \(z\) satisfies \eqref{eq:z-ode}. The variation-of-constants formula therefore gives the convolution representation in \eqref{eq:relaxed-variables}. Substituting \(\xi=u-z\) into \eqref{eq:limit-memory-system} recovers \eqref{eq:memory}. Hence \(u\in\XT\) solves the original forward problem.

To prove uniqueness, let \(u^{(1)},u^{(2)}\in\XT\) solve
\eqref{eq:memory} for the same coefficient \(q\in\Qad\) and the same
initial data, and put
\[
 \widetilde u=u^{(1)}-u^{(2)},\qquad
 \widetilde\xi=\xi^{(1)}-\xi^{(2)}.
\]
The difference satisfies \eqref{eq:xi-ode}--\eqref{eq:memory-system}
with zero initial data. For \(\alpha=0\), the commutator
\(C_\alpha\) defined in \eqref{eq:Calpha} vanishes, and the identity
\eqref{eq:Ealpha-pre} gives
\[
 E_0(t)
 +\beta\int_0^t\!\int_{\R^3}
 q\abs{\nabla\widetilde\xi}^2\dd x\dd\tau
 =E_0(0)=0,
\]
where \(E_0\) is the energy \eqref{eq:component-energy} evaluated
at \((\widetilde u,\widetilde\xi)\). Since
\(q\ge0\) and \(1-q\ge c_0>0\) by \eqref{eq:Qad}, both terms on
the left are nonnegative. In particular,
\(\widetilde u_t=0\). Together with \(\widetilde u(0)=0\), this
implies \(\widetilde u=0\), and \eqref{eq:xi-ode} then yields
\(\widetilde\xi=0\). Thus the solution is unique.
\end{proof}


\section{Localization and boundary reduction}
\label{sec:localization-reduction}

This section converts the comparison of two admissible coefficients into a
compactly supported source problem to which the Carleman estimates of
Section~\ref{sec:acoustic} can be applied. The first step uses the exponential
relaxation law to eliminate the time convolution and identifies the
coefficient difference through an initial source. The second step localizes
the resulting equation in space and estimates the cutoff terms by the
filtered measurement discrepancy \(\Dt\) defined in
\eqref{eq:data-discrepancy}.


\subsection{Reduction to a local difference equation}
\label{sec:localization}

The exponential kernel in \eqref{eq:memory} permits the memory equation to be
replaced by a local third-order equation. The regularity furnished by
Proposition~\ref{prop:direct} justifies the following differentiation in the
sense of distributions. For any \(q\in\Qad\), applying
\(\partial_t+\beta\) to \eqref{eq:memory} for the corresponding
solution \(u_q\) and using the exponential relaxation law gives
\begin{equation}\label{eq:Lq}
 L_qu_q=0,
 \qquad
 L_q
 =
 \partial_t^3+\beta\partial_t^2-\Delta\partial_t
 -\beta\diver((1-q)\nabla\,\cdot\,).
\end{equation}

Let \(q_1,q_2\in\Qad\), with the admissible class \(\Qad\) defined in
\eqref{eq:Qad}, and let \(u_{q_1},u_{q_2}\) denote the corresponding
solutions of \eqref{eq:memory}. Define
\begin{equation}\label{eq:difference-variables}
 w=u_{q_1}-u_{q_2},
 \qquad
 r=q_1-q_2.
\end{equation}
Subtracting \(L_{q_1}u_{q_1}=0\) and \(L_{q_2}u_{q_2}=0\),
which follow from \eqref{eq:Lq}, yields
\begin{equation}\label{eq:w-eq}
 L_{q_1}w
 =
 -\beta\diver(r\nabla u_{q_2}),
 \qquad
 w(\cdot,0)=w_t(\cdot,0)=w_{tt}(\cdot,0)=0.
\end{equation}
The forcing term \(-\beta\diver(r\nabla u_{q_2})\) in \eqref{eq:w-eq} contains the spatial derivative \(\nabla r\). Consequently, estimating this term directly in \(L^2\) would require control of \(\nabla r\), whereas the desired stability estimate concerns \(\|r\|_{L^2(D)}\).

The coefficient information required by the inverse argument is exposed more
directly after one time differentiation. Set 
\begin{equation}\label{eq:vwt}
 v=w_t.
\end{equation}
Since \(q_1,q_2\), and hence \(r\), are independent of time,
\eqref{eq:w-eq} gives
\begin{equation}\label{eq:v-source}
 L_{q_1}v=\diver F^{\rm coef},
 \qquad
 F^{\rm coef}=-\beta r\nabla (\partial_tu_{q_2}),
\end{equation}
The coefficient-dependent source \(F^{\rm coef}\) defined in \eqref{eq:v-source} satisfies
\[
 \supp F^{\rm coef}(\cdot,t)
 \subset\supp r
 \Subset D,
 \qquad
 F^{\rm coef}(\cdot,0)=0,
\]
where the second identity follows from the zero initial velocity in
\eqref{eq:memory}.

The forward initial-trace identities
\eqref{eq:forward-initial-traces} imply
\begin{equation}\label{eq:v-initial}
 v(\cdot,0)=v_t(\cdot,0)=0,
 \qquad
 v_{tt}(\cdot,0)=h=-\beta\diver(r\nabla f).
\end{equation}
The initial excitation \(f\) satisfies the affine condition
\eqref{eq:affine-f}; in particular,
\(\nabla f=\omega\) near \(\overline D\). Since
\(\supp r\Subset D\), the initial source in \eqref{eq:v-initial} reduces to
\begin{equation}\label{eq:h-direction}
 h=-\beta\partial_\omega r,
 \qquad
 \partial_\omega r=\omega\cdot\nabla r.
\end{equation}
Thus the coefficient-to-source reduction converts the unknown coefficient
difference \(r\) into its directional derivative. Proposition~
\ref{prop:initial-trace} will estimate the source \(h\), while
Lemma~\ref{lem:transport} will subsequently convert the resulting control of
\(\partial_\omega r\) into control of \(r\) itself.

\subsection{Compact localization and boundary reduction}
\label{sec:compact-localization}

The Carleman estimate in Section~\ref{sec:acoustic} requires compact spatial support. We therefore localize the solution \(v\) of \eqref{eq:v-source} in the auxiliary domain \(\Omega_2\) fixed in Section~\ref{sec:setting}. The commutator terms arising from derivatives of the cutoff \(\theta\) are supported outside \(\overline\Omega\), where \(q_1=0\) and \(F^{\rm coef}=0\). In this region, the filtered difference field \(W_\beta\), defined in \eqref{eq:filtered-difference-field}, satisfies the free wave equation and can be estimated in terms of the filtered lateral Cauchy measurements. The auxiliary field \(V\), defined in \eqref{eq:H-V}, enters the localized equation through the compactly supported terms \(\rho V\) and \(-\nabla\rho\cdot V\), where the cutoff \(\rho\) is specified below.

The localization is applied to the difference equation \eqref{eq:v-source},
whose principal operator is \(L_{q_1}\) as defined in \eqref{eq:Lq}.
The coefficient-dependent vector source \(F^{\rm coef}\) retains its
definition in \eqref{eq:v-source}. Choose
\(\theta,\rho\in C_c^\infty(\Omega_2)\) such that
\begin{equation}\label{eq:spatial-cutoffs}
\begin{aligned}
&\theta=1\quad\text{in a neighborhood of }\overline\Omega,\qquad
\supp\nabla\theta\Subset\Omega_2\setminus\overline\Omega,\\
&\rho=1\quad\text{in a neighborhood of }\supp\nabla\theta.
\end{aligned}
\end{equation}

Introduce the filtered difference field
\begin{equation}\label{eq:filtered-difference-field}
 W_\beta
 =v_t+\beta v
 =D_\beta w,
\end{equation}
where \(D_\beta\) is the temporal operator defined in \eqref{eq:Dbeta}.
Consequently, the lateral Cauchy traces of \(W_\beta\) on \(\Sigma_T\) are
precisely the difference of the filtered measurements
\(\Phi_T(q_1)-\Phi_T(q_2)\), where \(\Phi_T\) is defined in
\eqref{eq:observation-map}.

Commuting the cutoff \(\theta\) through \(L_{q_1}\) produces, among other terms,
the zeroth-order expression \((\Delta\theta)W_\beta\). To preserve the divergence
structure of the principal cutoff forcing, set
\begin{equation}\label{eq:H-V}
 H_\theta=(\Delta\theta)W_\beta,
 \qquad
 V=-\nabla(-\Delta)^{-1}H_\theta.
\end{equation}
Then
\[
 \diver V=H_\theta.
\]
With the cutoffs \(\theta,\rho\) specified in
\eqref{eq:spatial-cutoffs}, define
\begin{equation}\label{eq:localized-fields}
 \widetilde v=\theta v,
 \qquad
 F^{\rm loc}=F^{\rm coef}-2W_\beta\nabla\theta+\rho V,
 \qquad
 G^{\rm loc}=-\nabla\rho\cdot V.
\end{equation}
Lemma~\ref{lem:compact-localization} establishes \eqref{eq:localized-equation} for the localized fields in
\eqref{eq:localized-fields}.

\begin{lemma}
\label{lem:compact-localization}
Assume that \(v\) and \(F^{\rm coef}\) satisfy \eqref{eq:v-source}, with
\(\supp F^{\rm coef}(\cdot,t)\Subset D\), and let the fields
\(W_\beta,H_\theta,V,F^{\rm loc},G^{\rm loc}\) be defined by
\eqref{eq:filtered-difference-field}--\eqref{eq:localized-fields}. Then the localized fields defined in \eqref{eq:localized-fields} have compact
support in \(\Omega_2\) and satisfy
\begin{equation}\label{eq:localized-equation}
 L_{q_1}\widetilde v=\diver F^{\rm loc}+G^{\rm loc},
\end{equation}

If
\[
 v(\cdot,0)=v_t(\cdot,0)=0,
 \qquad
 F^{\rm coef}(\cdot,0)=0,
\]
then
\begin{equation}\label{eq:localized-source-initial}
 F^{\rm loc}(\cdot,0)=G^{\rm loc}(\cdot,0)=0.
\end{equation}
\end{lemma}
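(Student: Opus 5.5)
The plan is to compute \(L_{q_1}(\theta v)\) directly, using that \(\theta\) is time-independent and that \(q_1\) vanishes on \(\supp\nabla\theta\). Then I will rewrite every commutator term in divergence form plus a compactly supported zeroth-order remainder, using the auxiliary field \(V\). The regularity \(u_{q_j}\in\XT\) from Proposition~\ref{prop:direct} gives \(v\in W^{2,\infty}(0,T;H^3)\cap L^\infty(0,T;H^5)\) and \(W_\beta\in L^\infty(0,T;H^3)\). This is more than enough to justify the identities below pointwise in \(t\) as equalities in \(H^{1}\) (or in distributions).

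\emph{Step 1 (commutator).} Since \(\theta\) does not depend on \(t\), the terms \(\partial_t^3+\beta\partial_t^2\) commute with \(\theta\). By \eqref{eq:spatial-cutoffs}, \(\supp\nabla\theta\) lies outside \(\overline\Omega\supset\supp q_1\), so \(\nabla\theta\,(1-q_1)=\nabla\theta\) and \(\Delta\theta\,(1-q_1)=\Delta\theta\). Using \([\Delta,\theta]g=2\nabla\theta\cdot\nabla g+(\Delta\theta)g\), this gives
\[
 L_{q_1}(\theta v)=\theta L_{q_1}v-[\Delta,\theta](v_t+\beta v)
 =\theta\,\diver F^{\rm coef}-2\nabla\theta\cdot\nabla W_\beta-(\Delta\theta)W_\beta .
\]
Since \(\theta=1\) near \(\overline\Omega\supset D\supset\supp F^{\rm coef}\), we have \(\theta\,\diver F^{\rm coef}=\diver F^{\rm coef}\).

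\emph{Step 2 (divergence form).} Write \(-2\nabla\theta\cdot\nabla W_\beta=-\diver(2W_\beta\nabla\theta)+2(\Delta\theta)W_\beta\). The right-hand side then becomes \(\diver(F^{\rm coef}-2W_\beta\nabla\theta)+H_\theta\).

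For each \(t\), \(H_\theta(t)\in L^2\) has compact support in \(\Omega_2\setminus\overline\Omega\). Hence \(V=-\nabla(-\Delta)^{-1}H_\theta\), defined via the Newton potential in \(\R^3\), belongs to \(H^1_{\rm loc}\) and satisfies \(\diver V=H_\theta\). Then
\[
 \diver(\rho V)=\rho H_\theta+\nabla\rho\cdot V=H_\theta+\nabla\rho\cdot V,
\]
because \(\rho=1\) near \(\supp\nabla\theta\supset\supp\Delta\theta\). Thus \(H_\theta=\diver(\rho V)-\nabla\rho\cdot V\), which yields \eqref{eq:localized-equation} with \(F^{\rm loc},G^{\rm loc}\) as in \eqref{eq:localized-fields}.

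\emph{Step 3 (supports and initial values).} The field \(\widetilde v=\theta v\) is supported in \(\supp\theta\Subset\Omega_2\). The term \(F^{\rm coef}\) is supported in \(D\), the term \(W_\beta\nabla\theta\) in \(\supp\nabla\theta\), and \(\rho V\), \(\nabla\rho\cdot V\) in \(\supp\rho\Subset\Omega_2\). So all localized fields have compact support in \(\Omega_2\), even though \(V\) itself is not compactly supported.

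If \(v(0)=v_t(0)=0\), then \(W_\beta(0)=0\). Hence \(H_\theta(0)=0\), and by linearity of the time-independent operator \(\nabla(-\Delta)^{-1}\), also \(V(0)=0\). Together with \(F^{\rm coef}(0)=0\), this gives \eqref{eq:localized-source-initial}.

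The only delicate point is the nonlocal field \(V\): it must be well defined and regular enough that \(\rho V\) and \(\nabla\rho\cdot V\) are genuine \(L^2\) fields with the stated supports. I would handle this by the local mapping property of the Newton potential on compactly supported \(L^2\) data. The time continuity of \(V\) is inherited from that of \(W_\beta\).
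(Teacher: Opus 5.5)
Your proposal is correct and follows essentially the same argument as the paper. Both use the commutator computation (exploiting \(q_1=0\) near \(\supp\nabla\theta\) and \(\theta=1\) near \(\supp F^{\rm coef}\)), the identity \(\rho H_\theta=H_\theta\) with \(\diver V=H_\theta\) to put the zeroth-order cutoff term into the form \(\diver(\rho V)-\nabla\rho\cdot V\), and \(W_\beta(0)=0\) for the initial values. The only cosmetic difference is direction: the paper expands \(\diver F^{\rm loc}+G^{\rm loc}\) and matches it to \(L_{q_1}(\theta v)\), while you rewrite \(L_{q_1}(\theta v)\) forward into that form.
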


\begin{proof}
Because \(\supp F^{\rm coef}\Subset D\) and \(\theta=1\) near
\(\overline D\),
\[
 \theta\diver F^{\rm coef}=\diver F^{\rm coef}
\]
in the sense of distributions. Moreover,
\(\supp\nabla\theta\Subset\R^3\setminus\overline\Omega\), while
\(\supp q_1\Subset D\Subset\Omega\), so \(q_1=0\) in a neighborhood of
\(\supp\nabla\theta\). A direct commutator calculation using
\eqref{eq:filtered-difference-field} gives
\[
 L_{q_1}(\theta v)
 =
 \diver F^{\rm coef}
 -2\nabla\theta\cdot\nabla W_\beta
 -(\Delta\theta)W_\beta.
\]
On the other hand, \(\rho H_\theta=H_\theta\), and the definitions
\eqref{eq:H-V}--\eqref{eq:localized-fields} give
\[
\begin{aligned}
 \diver F^{\rm loc}+G^{\rm loc}
 &=
 \diver F^{\rm coef}
 -2\diver(W_\beta\nabla\theta)
 +\rho\diver V=
 \diver F^{\rm coef}
 -2\nabla\theta\cdot\nabla W_\beta
 -(\Delta\theta)W_\beta.
\end{aligned}
\]
This proves \eqref{eq:localized-equation}. Compact support follows directly from the
cutoffs \(\theta\) and \(\rho\).

At \(t=0\), the identities \(v(0)=v_t(0)=0\) imply
\(W_\beta(0)=0\) by \eqref{eq:filtered-difference-field}. Hence
\(H_\theta(0)=V(0)=0\) by \eqref{eq:H-V}. Together with \(F^{\rm coef}(0)=0\),
the definitions in \eqref{eq:localized-fields} yield
\eqref{eq:localized-source-initial}.
\end{proof}

Lemma~\ref{lem:compact-localization} reduces the cutoff error to the filtered field \(W_\beta\)
near \(\supp\nabla\theta\). The next step is therefore to estimate \(W_\beta\) in
a neighborhood of \(\supp\nabla\theta\) lying outside \(\overline\Omega\)
from its lateral Cauchy traces. Choose an open set \(U_\theta\) such that
\begin{equation}\label{eq:exterior-neighborhood}
 \supp\nabla\theta
 \Subset U_\theta
 \Subset\R^3\setminus\overline\Omega.
\end{equation}
For each fixed \(t\), the function \(H_\theta(\cdot,t)\) in
\eqref{eq:H-V} is supported in the fixed compact set
\(\supp\nabla\theta\). The Fourier multiplier defining \(V\) gives
\[
 \norm{\nabla V(t)}_{L^2(\R^3)}
 \le
 C\norm{H_\theta(t)}_{L^2(\R^3)}.
\]
The homogeneous Sobolev inequality then yields
\[
 \norm{V(t)}_{L^2(\Omega_2)}
 \le
 C_{\Omega_2}\norm{V(t)}_{L^6(\R^3)}
 \le
 C\norm{\nabla V(t)}_{L^2(\R^3)}
 \le
 C\norm{H_\theta(t)}_{L^2(\R^3)}.
\]
Applying the Fourier multiplier and Sobolev estimates above to
\(V_t=-\nabla(-\Delta)^{-1}(\Delta\theta)(W_\beta)_t\), and using
\eqref{eq:H-V}--\eqref{eq:localized-fields}, we obtain
\begin{equation}\label{eq:localization-error}
 \norm{F^{\rm loc}-F^{\rm coef}}_{H^1(0,T;L^2(\Omega_2))}
 +
 \norm{G^{\rm loc}}_{H^1(0,T;L^2(\Omega_2))}
 \le
 C
 \norm{W_\beta}_{H^1(0,T;L^2(U_\theta))}.
\end{equation}

It remains to control the right-hand side of \eqref{eq:localization-error}. Outside
\(\Omega\), both \(q_1\) and \(F^{\rm coef}\) vanish. Hence
\eqref{eq:v-source} reduces to
\[
 (\partial_t^2-\Delta)(v_t+\beta v)=0.
\]
By \eqref{eq:filtered-difference-field}, the field \(W_\beta\) therefore satisfies the free
wave equation in
\((\R^3\setminus\overline\Omega)\times(0,T)\).
The following estimate controls such an exterior field from its causal
lateral Cauchy traces.

\begin{lemma}
\label{lem:exterior}
Let \(\mathcal O\Subset\R^3\setminus\overline\Omega\) be open, and let
\(Z\) be a causal finite-energy solution of
\[
 Z_{tt}-\Delta Z=0
 \quad\text{in }
 (\R^3\setminus\overline\Omega)\times(0,T),
\]
with zero initial data in the exterior domain. Suppose that
\[
 g=Z|_{\Sigma_T}
 \in W_+^2(0,T;H^{1/2}(\partial\Omega)),
\]
and
\[
 k=\partial_\nu Z|_{\Sigma_T}
 \in W_+^1(0,T;H^{-1/2}(\partial\Omega)).
\]
Then
\begin{equation}\label{eq:exterior-est}
 \norm{Z}_{L^2(0,T;H^1(\mathcal O))}
 +
 \norm{Z_t}_{L^2(\mathcal O\times(0,T))}
 \le
 C_{\Omega,\mathcal O,T}
 \left(
 \norm{g}_{W_+^2(0,T;H^{1/2}(\partial\Omega))}
 +
 \norm{k}_{W_+^1(0,T;H^{-1/2}(\partial\Omega))}
 \right),
\end{equation}
where \(C_{\Omega,\mathcal O,T}>0\) depends only on
\(\Omega\), \(\mathcal O\), and \(T\), and is independent of
\(Z\), \(g\), and \(k\). 
\end{lemma}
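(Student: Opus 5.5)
We plan to prove Lemma~\ref{lem:exterior} by a Laplace-transform (frequency-domain) argument that uses the causality encoded in the spaces \(W_+^m\). The idea is to represent the exterior field through boundary layer potentials built from its Cauchy data, and to estimate those potentials away from \(\partial\Omega\).

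\emph{Step 1: causal extensions.} Let \(\tilde g\in W_+^2(\R;H^{1/2}(\partial\Omega))\) and \(\tilde k\in W_+^1(\R;H^{-1/2}(\partial\Omega))\) be near-optimal extensions in the quotient norms \eqref{eq:causal-restriction-norm}. Let \(\tilde Z\) be the causal solution on \((\R^3\setminus\overline\Omega)\times\R\) defined by the Kirchhoff/Green representation formula
\[
 \tilde Z=\mathcal S\tilde k-\mathcal D\tilde g .
\]
Here \(\mathcal S\) and \(\mathcal D\) are the retarded single- and double-layer potentials on \(\partial\Omega\), with the sign conventions adapted to the outward normal \(\nu\) of \(\Omega\).

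\emph{Step 2: identification with \(Z\).} By finite speed of propagation and domain of dependence, the values on \(\mathcal O\times(0,T)\) of the exterior solution with zero initial data depend only on the Cauchy data on \(\partial\Omega\times(0,T)\). Green's identity in the exterior domain, applied against the retarded fundamental solution (Huygens' principle in \(\R^3\)), gives \(Z=\mathcal S k-\mathcal D g\) in \(\mathcal O\times(0,T)\). Since the retarded potentials at time \(t\) involve only data on \([0,t]\), we get \(\tilde Z=Z\) on \(\mathcal O\times(0,T)\). A subtlety is that finite energy is needed to discard the contribution at infinity. Because the initial data vanish and the speed is finite, \(Z(\cdot,t)\) vanishes outside a bounded set, so that contribution is zero.

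\emph{Step 3: potential estimates off the boundary.} Take the Laplace transform with \(s=\sigma+i\eta\), where \(\sigma=1/T\) is fixed. The single- and double-layer potentials of the Helmholtz problem \(s^2-\Delta\) have kernels \(e^{-s|x-y|}/(4\pi|x-y|)\) and their normal derivatives. For \(x\in\mathcal O\) we have \(\dist(x,\partial\Omega)\ge d>0\), so these kernels are smooth. Their \(x\)-derivatives up to order one are bounded by \(C|s|^{2}\), with constants depending on \(d,\Omega,\mathcal O,\sigma\). Hence
\[
 \|\hat{\tilde Z}(s)\|_{H^1(\mathcal O)}+|s|\,\|\hat{\tilde Z}(s)\|_{L^2(\mathcal O)}
 \le C\bigl(|s|^2\|\hat{\tilde g}(s)\|_{H^{1/2}}+|s|\|\hat{\tilde k}(s)\|_{H^{-1/2}}\bigr).
\]
The powers of \(|s|\) are chosen crudely. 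One gets \(|s|^{1}\) per normal derivative and \(|s|\) from \(\partial_t\), which is exactly what the regularity orders \(2\) and \(1\) can absorb. Sharper Bamberger--Ha Duong bounds are available but are not needed here, because we stay away from \(\partial\Omega\). Plancherel in \(\eta\) (the Paley--Wiener form of causality) then bounds \(\|e^{-\sigma t}\tilde Z\|_{L^2(\R;H^1(\mathcal O))}\) and the analogous \(Z_t\) norm by the \(W^{2,2}\) and \(W^{1,2}\) norms of \(\tilde g\) and \(\tilde k\). Since \(e^{-\sigma t}\ge e^{-1}\) on \((0,T)\), this yields \eqref{eq:exterior-est} after taking the infimum over extensions.

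\emph{Main obstacle.} The main obstacle is Step 2 in the required weak setting. The traces lie only in \(H^{\pm1/2}\), so Green's representation must be justified by approximation, via density of smooth causal data and continuity of the potential maps from Step 3. If this proves cumbersome, the fallback is to cut off \(Z\) by a spatial function \(\chi\) with \(\chi=1\) near \(\partial\Omega\) and \(\chi=0\) near \(\mathcal O\). One would then apply a standard hyperbolic energy and lifting estimate to \((1-\chi)Z\) in a bounded exterior shell, lifting \(g\) and \(k\) with loss of time derivatives. This loss is absorbed by the extra temporal regularity assumed on \(g\) and \(k\).
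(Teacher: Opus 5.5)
Your architecture (retarded layer potentials plus a Laplace transform on $\operatorname{Re}s=\sigma$) can be made to work. As written, however, Step~3 has a genuine gap in the single-layer term, and it sits exactly where you say sharper bounds are not needed.

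For $x\in\mathcal O$, the value $\widehat{\mathcal S}(s)\widehat k(x)$ is the pairing of $\widehat k(s)\in H^{-1/2}(\partial\Omega)$ with $y\mapsto e^{-s|x-y|}/(4\pi|x-y|)$. Any kernel-based bound therefore needs the $H^{1/2}(\partial\Omega)$-norm of this function. Staying away from $\partial\Omega$ makes the kernel smooth, but not uniformly in $s$: it oscillates along $\partial\Omega$ at frequency $|\operatorname{Im}s|$, so its $H^{1/2}(\partial\Omega)$-norm is of order $|s|^{1/2}$. Adding the factor $|s|$ from $\nabla_x$ or $\partial_t$, your count yields $|s|^{3/2}\norm{\widehat k(s)}_{H^{-1/2}(\partial\Omega)}$. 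After Plancherel this requires $3/2$ time derivatives of $k$, whereas only $k\in W_+^1(0,T;H^{-1/2}(\partial\Omega))$ is assumed. The double-layer term is fine: pairing with $\widehat g(s)\in H^{1/2}\subset L^2$ uses only the $L^2$-norm of the kernel, and $|s|^2$ is the correct count there.

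The bound you display for the $k$-term is true, but it rests on cancellation, i.e.\ on an operator-norm argument rather than on kernel size. You can cite the Bamberger--Ha Duong estimate, or argue by duality. The adjoint of $\varphi\mapsto\widehat{\mathcal S}(s)\varphi|_{\mathcal O}$ is the boundary trace of $w=(s^2-\Delta)^{-1}f$ with $f$ supported in $\mathcal O$. Testing $(s^2-\Delta)w=f$ against $\bar s\bar w$ gives $\sigma\bigl(|s|^2\norm{w}_{L^2}^2+\norm{\nabla w}_{L^2}^2\bigr)\le|s|\norm{f}_{L^2}\norm{w}_{L^2}$, hence an $H^1$-bound on $w$ uniform in $s$. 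The gradient case, with $f=\operatorname{div}F$, costs exactly one factor $|s|$.

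Step~2 also needs a different justification from the one you sketch. Approximating $g$ and $k$ separately by smooth causal data does not produce Cauchy data of exterior solutions. The representation should instead come from the Green formula for $H^1$ solutions of $(s^2-\Delta)u=0$ in the exterior, after extending $Z$ beyond $T$ as the exterior Dirichlet solution with data $\tilde g$.

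The paper avoids all of this by never using $k$. Since $Z$ solves the exterior Dirichlet problem with zero initial data, it is determined by $g$ alone. With the bounded $(1-\Delta)$-harmonic lifting $\mathfrak E:H^{1/2}(\partial\Omega)\to H^1(\R^3\setminus\overline\Omega)$, the field $Z_0=Z-\mathfrak Eg$ has zero Dirichlet and initial data and solves $(Z_0)_{tt}-\Delta Z_0=\mathfrak Eg-\partial_t^2\mathfrak Eg\in L^2$. The two time derivatives in $W_+^2$ pay exactly for $\partial_t^2\mathfrak Eg$, and the basic energy estimate closes the argument. No layer potentials, Laplace transform, or representation formula are needed.

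This lifting argument is the correct version of your fallback. As you phrase it, cutting $Z$ off by $\chi$ and running an energy estimate for $(1-\chi)Z$ is circular. The commutator source $2\nabla\chi\cdot\nabla Z+(\Delta\chi)Z$ involves $Z$ on $\supp\nabla\chi$, which is the very quantity being estimated. You have to subtract a lifting of the boundary datum rather than cut off the solution.
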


\begin{proof}
Let \(\Omega_{\rm ext}=\R^3\setminus\overline\Omega\). The smoothness
of \(\partial\Omega\) ensures the existence of a bounded linear Dirichlet
lifting
\[
 \mathfrak E:H^{1/2}(\partial\Omega)\longrightarrow
 H^1(\Omega_{\rm ext})
\]
such that, for every \(\varphi\in H^{1/2}(\partial\Omega)\),
\begin{equation}\label{eq:exterior-elliptic-lifting}
 \begin{cases}
 (1-\Delta)\mathfrak E\varphi=0
 &\text{in }\Omega_{\rm ext},\\
 (\mathfrak E\varphi)|_{\partial\Omega}=\varphi,
 \end{cases}
 \qquad
 \norm{\mathfrak E\varphi}_{H^1(\Omega_{\rm ext})}
 \le C_\Omega\norm\varphi_{H^{1/2}(\partial\Omega)}.
\end{equation}
Indeed, take any bounded \(H^1\)-extension of \(\varphi\) and solve the
resulting homogeneous-trace problem for \(1-\Delta\) by the Lax--Milgram
theorem.

Since \(g\in W_+^2(0,T;H^{1/2}(\partial\Omega))\), its causal
extensions have vanishing initial traces of orders zero and one; hence
\(g(0)=g_t(0)=0\). Set
\[
 E_g(t)=\mathfrak E g(t).
\]
The boundedness in \eqref{eq:exterior-elliptic-lifting} implies
\begin{equation}\label{eq:exterior-lift-time}
 \norm{E_g}_{W^{2,2}(0,T;H^1(\Omega_{\rm ext}))}
 \le C_\Omega
 \norm g_{W^{2,2}(0,T;H^{1/2}(\partial\Omega))},
 \qquad E_g(\cdot,0)=(E_g)_t(\cdot,0)=0.
\end{equation}
Moreover, \(\Delta E_g=E_g\) in distributions on
\(\Omega_{\rm ext}\), by \eqref{eq:exterior-elliptic-lifting}.

The field \(Z_0=Z-E_g\) has zero Dirichlet and initial data, and
satisfies
\begin{equation}\label{eq:exterior-homogeneous-boundary}
 \begin{cases}
 (Z_0)_{tt}-\Delta Z_0=E_g-(E_g)_{tt}
 &\text{in }\Omega_{\rm ext}\times(0,T),\\
 Z_0|_{\partial\Omega}=0,\qquad
 Z_0(\cdot,0)=(Z_0)_t(\cdot,0)=0.
 \end{cases}
\end{equation}
The forcing \(E_g-(E_g)_{tt}\) belongs to
\(L^2(0,T;L^2(\Omega_{\rm ext}))\) by
\eqref{eq:exterior-lift-time}. The standard energy estimate for
\eqref{eq:exterior-homogeneous-boundary}, first for smooth data and
then by approximation, gives
\[
\begin{aligned}
 &\sup_{0\le t\le T}\Bigl(
 \norm{(Z_0)_t(t)}_{L^2(\Omega_{\rm ext})}
 +\norm{\nabla Z_0(t)}_{L^2(\Omega_{\rm ext})}\Bigr)\\
 &\qquad\le C
 \norm{E_g-(E_g)_{tt}}_{L^1(0,T;L^2(\Omega_{\rm ext}))}
 \le C_T\norm{E_g}_{W^{2,2}(0,T;H^1(\Omega_{\rm ext}))}.
\end{aligned}
\]
The initial condition \(Z_0(0)=0\) also gives
\(\norm{Z_0}_{L^\infty(0,T;L^2(\Omega_{\rm ext}))}\) bounded by
the same right-hand side. The embedding
\(W^{1,2}(0,T;H^1)\hookrightarrow C([0,T];H^1)\), applied to
\(E_g\) and \((E_g)_t\), now yields
\begin{equation}\label{eq:exterior-dirichlet-bound}
 \norm{Z}_{L^2(0,T;H^1(\mathcal O))}
 +\norm{Z_t}_{L^2(\mathcal O\times(0,T))}
 \le C_{\Omega,\mathcal O,T}
 \norm g_{W^{2,2}(0,T;H^{1/2}(\partial\Omega))}.
\end{equation}
Finally, the quotient-norm definition
\eqref{eq:causal-restriction-norm} implies
\[
 \norm g_{W^{2,2}(0,T;H^{1/2}(\partial\Omega))}
 \le \norm g_{W_+^2(0,T;H^{1/2}(\partial\Omega))}.
\]
Thus \eqref{eq:exterior-dirichlet-bound} proves
\eqref{eq:exterior-est}. The argument uses only the Dirichlet trace;
\eqref{eq:exterior-est} retains the full Cauchy pair to match the
measurement discrepancy in \eqref{eq:data-discrepancy}.
\end{proof}

For \(s\ge1\), define the weighted norms of the cutoff-generated terms by
\begin{equation}\label{eq:cutoff-source-norms}
\begin{aligned}
\mathcal F_s^{\rm cut}
&=
\sum_{j=0}^1
\norm{e^{s\phi}\partial_t^j
(F^{\rm loc}-F^{\rm coef})}_{L^2(\Omega_2\times(0,T))}^2,\quad 
\mathcal G_s^{\rm cut}
=
\sum_{j=0}^1
\norm{e^{s\phi}\partial_t^j
G^{\rm loc}}_{L^2(\Omega_2\times(0,T))}^2,
\end{aligned}
\end{equation}
where \(\phi\) is defined in \eqref{eq:phi} and the source fields are defined in \eqref{eq:v-source} and \eqref{eq:localized-fields}.

\begin{proposition}\label{prop:localized-difference}
Let \(q_1,q_2\in\Qad\). The fields \(\widetilde v\), \(F^{\rm loc}\), and \(G^{\rm loc}\) defined in \eqref{eq:localized-fields} have compact spatial support in \(\Omega_2\) and satisfy the equation \eqref{eq:localized-equation} and the source compatibility conditions \eqref{eq:localized-source-initial}. Moreover,
\begin{equation}\label{eq:localized-initial-source}
\widetilde v(\cdot,0)
=\widetilde v_t(\cdot,0)=0,
\qquad
\widetilde v_{tt}(\cdot,0)=h
=-\beta\partial_\omega r.
\end{equation}
The filtered difference field \(W_\beta\), defined in \eqref{eq:filtered-difference-field}, satisfies
\begin{equation}\label{eq:filtered-field-data}
\norm{W_\beta}_{L^2(0,T;H^1(U_\theta))}
+
\norm{(W_\beta)_t}_{L^2(U_\theta\times(0,T))}
\le C\Dt.
\end{equation}
Furthermore, for every \(s\ge1\),
\begin{equation}\label{eq:cutoff-weighted}
\mathcal F_s^{\rm cut}
+\mathcal G_s^{\rm cut}
\le Ce^{C_bs}\Dt^2.
\end{equation}
The constants \(C\) and \(C_b>0\) depend only on the fixed a priori bounds, localization geometry, and Carleman weight, and are independent of \(q_1,q_2\), \(s\), and \(\Dt\).
\end{proposition}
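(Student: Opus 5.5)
The proof assembles results already established in this section. I will take the claims in the order stated.

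\emph{Structure and initial data.} The compact support in \(\Omega_2\), the identity \eqref{eq:localized-equation}, and the compatibility conditions \eqref{eq:localized-source-initial} are the content of Lemma~\ref{lem:compact-localization}. Its hypotheses are checked as follows. Equation \eqref{eq:v-source} holds by Proposition~\ref{prop:direct}. Since \(\supp r\Subset D\), we have \(\supp F^{\rm coef}(\cdot,t)\Subset D\). Finally, \(v(0)=v_t(0)=0\) holds by \eqref{eq:v-initial}, and \(F^{\rm coef}(0)=0\) holds because \(\partial_tu_{q_2}(0)=0\).

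For \eqref{eq:localized-initial-source}, the time regularity in \(\XT\) lets us take traces at \(t=0\). Since \(\theta\) is independent of time, \(\partial_t^j\widetilde v(0)=\theta\,\partial_t^jv(0)\). By \eqref{eq:v-initial} this gives \(\widetilde v(0)=\widetilde v_t(0)=0\) and \(\widetilde v_{tt}(0)=\theta h\). Because \(\theta=1\) near \(\overline D\supset\supp r\), we get \(\theta h=h=-\beta\partial_\omega r\) by \eqref{eq:h-direction}.

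\emph{Exterior estimate.} For \eqref{eq:filtered-field-data}, apply Lemma~\ref{lem:exterior} to \(Z=W_\beta\) with \(\mathcal O=U_\theta\). The hypotheses are verified as follows.
\begin{itemize}
\item \(W_\beta=D_\beta w\) solves the free wave equation in \((\R^3\setminus\overline\Omega)\times(0,T)\), as shown before the lemma.
\item The exterior initial data vanish: \(W_\beta(0)=v_t(0)+\beta v(0)=0\), and \(\partial_tW_\beta(0)=v_{tt}(0)+\beta v_t(0)=h\), which vanishes outside \(D\).
\item It is causal and of finite energy by Proposition~\ref{prop:direct}.
\item Its lateral traces are exactly \(\Phi_T(q_1)-\Phi_T(q_2)\), which lie in \(Y_T\) by Proposition~\ref{prop:measurement}. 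Both measurements have the same vanishing initial traces \eqref{eq:trace-compat}, so their difference belongs to \(Y_T\) with norm \(\Dt\).
\end{itemize}
The estimate \eqref{eq:exterior-est} then gives \eqref{eq:filtered-field-data}.

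\emph{Weighted cutoff terms.} For \eqref{eq:cutoff-weighted}, first bound the weight. On \(\Omega_2\times(0,T)\) we have \(\phi\le M_\psi\), so \(e^{s\phi}\le e^{sM_\psi}\). Hence
\[
\mathcal F^{\rm cut}_s+\mathcal G^{\rm cut}_s\le e^{2sM_\psi}\Bigl(\norm{F^{\rm loc}-F^{\rm coef}}_{H^1(0,T;L^2(\Omega_2))}^2+\norm{G^{\rm loc}}_{H^1(0,T;L^2(\Omega_2))}^2\Bigr).
\]
Estimate \eqref{eq:localization-error} bounds the bracket by \(C\norm{W_\beta}_{H^1(0,T;L^2(U_\theta))}^2\). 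If \(M_\psi\le0\), take \(C_b\) to be any positive constant, for instance \(C_b=2\max(M_\psi,0)+1\); otherwise \(C_b=2M_\psi\) suffices.

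\emph{Main obstacle.} The remaining gap is that \eqref{eq:filtered-field-data} controls \(W_\beta\) and \((W_\beta)_t\) in \(L^2\), while \eqref{eq:localization-error} needs exactly \(\norm{W_\beta}_{H^1(0,T;L^2(U_\theta))}\). These match directly, so \eqref{eq:cutoff-weighted} follows with \(C\) independent of \(s\), \(q_1\), \(q_2\).

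The step I regard as delicate is the justification of \eqref{eq:localization-error} itself, which I take as given from the text preceding the proposition. It rests on the support of \(H_\theta\) lying in \(\supp\nabla\theta\subset U_\theta\). It also uses the bound for \(\nabla(-\Delta)^{-1}\) into \(L^6\), restricted to \(\Omega_2\), applied to \(H_\theta\) and to \(\partial_tH_\theta\).

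A second point to check is that Lemma~\ref{lem:exterior} is applicable in its finite-energy framework. This holds because \(W_\beta\in W^{1,\infty}(0,T;H^3)\) by \eqref{eq:filtered-regularity}.
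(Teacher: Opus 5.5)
Your proposal is correct and follows the same route as the paper. Lemma~\ref{lem:compact-localization} gives the structural claims, \(\theta=1\) near \(\overline D\) gives the initial traces, Lemma~\ref{lem:exterior} with \(Z=W_\beta\) and \(\mathcal O=U_\theta\) gives \eqref{eq:filtered-field-data}, and \eqref{eq:localization-error} together with \(\phi\le M_\psi\) on \(\Omega_2\times(0,T)\) gives \eqref{eq:cutoff-weighted} with \(C_b=2M_\psi\); the case \(M_\psi\le0\) you allow for never occurs, since \(x_0\notin\overline{\Omega_2}\) forces \(\psi>0\) there.
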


\begin{proof}
The compact support properties, the localized equation \eqref{eq:localized-equation}, and the source compatibility conditions \eqref{eq:localized-source-initial} follow from the construction in \eqref{eq:localized-fields}. Since \(\theta=1\) in a neighborhood of \(\overline D\) and \(h=-\beta\partial_\omega r\) is supported in \(\overline D\) by \eqref{eq:h-direction}, the initial conditions \eqref{eq:v-initial} imply \eqref{eq:localized-initial-source}.

To prove \eqref{eq:filtered-field-data}, apply Lemma~\ref{lem:exterior} with \(Z=W_\beta\) and \(\mathcal O=U_\theta\), where \(U_\theta\) is specified in \eqref{eq:exterior-neighborhood}. By \eqref{eq:filtered-difference-field}, \eqref{eq:v-initial}, and \eqref{eq:h-direction},
\[
W_\beta(\cdot,0)=0,
\qquad
(W_\beta)_t(\cdot,0)=h.
\]
Both initial traces vanish in \(\R^3\setminus\overline\Omega\). Moreover, the lateral Cauchy traces of \(W_\beta\) are given by
\[
\Phi_T(q_1)-\Phi_T(q_2)
\]
according to \eqref{eq:filtered-difference-field} and \eqref{eq:observation-map}. Their initial compatibility conditions follow from \eqref{eq:trace-compat} applied to \(q_1\) and \(q_2\). Lemma~\ref{lem:exterior} and the definition of \(\Dt\) in \eqref{eq:data-discrepancy} therefore yield \eqref{eq:filtered-field-data}.

Finally, \eqref{eq:localization-error} and \eqref{eq:filtered-field-data} give
\[
\begin{aligned}
&\norm{F^{\rm loc}-F^{\rm coef}}_{H^1(0,T;L^2(\Omega_2))}
+\norm{G^{\rm loc}}_{H^1(0,T;L^2(\Omega_2))}
\le C\Dt.
\end{aligned}
\]
Since \(\phi\) is bounded above on \(\Omega_2\times(0,T)\), the definitions in \eqref{eq:cutoff-source-norms} imply \eqref{eq:cutoff-weighted}.
\end{proof}

\section{Acoustic Carleman estimate and recovery of the initial trace}
\label{sec:acoustic}

This section establishes weighted estimates for compactly supported
solutions of the third-order equation \eqref{eq:general-Lq}, where
\(q\in\Qad\) is an arbitrary admissible coefficient and \(F,G\) are
source fields satisfying \eqref{eq:acoustic-support}--
\eqref{eq:acoustic-regularity}. The principal difficulty is the divergence
forcing \(\diver F\): applying a scalar Carleman estimate directly would
require an additional spatial derivative of \(F\).
Proposition~\ref{prop:acoustic} avoids this derivative loss by estimating
the associated acoustic variables without spatially differentiating \(F\).
Proposition~\ref{prop:initial-trace} then establishes a weighted estimate
for the initial value \(h=v_{tt}(\cdot, 0)\).

Fix a compact set \(\mathcal K\Subset\Omega_2\), and assume throughout this
section that
\begin{equation}\label{eq:acoustic-support}
 q\in\Qad,\qquad
 \supp v(\cdot,t)\cup\supp F(\cdot,t)\cup\supp G(\cdot,t)
 \subset\mathcal K,\qquad 0\le t\le T.
\end{equation}
The fields satisfy
\begin{equation}\label{eq:general-Lq}
 L_qv=\diver F+G,\qquad
 v(\cdot,0)=v_t(\cdot,0)=0,\qquad
 v_{tt}(\cdot,0)=h,\qquad
 F(\cdot,0)=G(\cdot,0)=0.
\end{equation}
When applying the estimates of this section to the localized difference
equation \eqref{eq:localized-equation}, we take \(q=q_1\) in
\eqref{eq:general-Lq} and identify the fields by
\begin{equation}\label{eq:localized-acoustic-identification}
 v=\widetilde v,\qquad
 F=F^{\rm loc},\qquad
 G=G^{\rm loc}.
\end{equation}
By Proposition~\ref{prop:localized-difference}, these fields satisfy the
equation and initial conditions in \eqref{eq:general-Lq}. The support
condition \eqref{eq:acoustic-support} holds with a fixed compact set
\(\mathcal K\Subset\Omega_2\) containing
\(\overline D\cup\supp\theta\cup\supp\rho\), by
\eqref{eq:v-source} and \eqref{eq:localized-fields}. In particular,
\eqref{eq:localized-initial-source} gives
\[
 h=\widetilde v_{tt}(\cdot,0)=-\beta\partial_\omega r.
\]
Moreover, the acoustic variable \(Y=v_t+\beta v\), defined in
\eqref{eq:acoustic-vars}, satisfies \(Y=\theta W_\beta\) by
\eqref{eq:localized-fields} and \eqref{eq:filtered-difference-field}.

Unless stated otherwise, all space-time norms in this section are taken over
\begin{equation}\label{eq:space-time-cylinder}
 Q_T=\R^3\times(0,T).
\end{equation}
For the Carleman parameter \(s\ge1\), use the weight \(\phi\) defined in
\eqref{eq:phi} and set
\begin{equation}\label{eq:weighted-norms}
\begin{aligned}
 \Lambda_s&=(s^2-\Delta_x)^{1/2},\quad  \norm U_s=\norm{e^{s\phi}U}_{L^2(Q_T)},\quad \norm U_{-1,s}=\norm{\Lambda_s^{-1}(e^{s\phi}U)}_{L^2(Q_T)},\\
 \mathcal F_s&=\norm{F_t}_s^2+\norm{F}_s^2,
 \qquad
 \mathcal G_s=\norm{G_t}_s^2+\norm{G}_s^2.
\end{aligned}
\end{equation}
The notation in \eqref{eq:weighted-norms} will be used throughout the
section.

Assume in addition that
\begin{equation}\label{eq:acoustic-regularity}
\begin{aligned}
 &v\in L^\infty(0,T;H^5(\R^3)),\quad v_t,v_{tt}\in L^\infty(0,T;H^3(\R^3)),\quad v_{ttt},v^{(4)}\in L^\infty(0,T;H^1(\R^3)),\\
 &F\in H^1(0,T;L^2(\R^3;\R^3)),
 \qquad
 G\in H^1(0,T;L^2(\R^3)).
\end{aligned}
\end{equation}
The initial conditions in \eqref{eq:general-Lq} are understood in the
corresponding trace spaces. Under the identification \eqref{eq:localized-acoustic-identification},
the regularity in \eqref{eq:acoustic-regularity} follows from
Proposition~\ref{prop:direct}, the identity
\(\widetilde v=\theta w_t\), and the cutoff estimate
\eqref{eq:localization-error}.

\subsection{First-order acoustic reduction}

The third-order equation \eqref{eq:general-Lq} is first rewritten as a
first-order acoustic system. Define
\begin{equation}\label{eq:acoustic-vars}
 Y=v_t+\beta v,\qquad
 a=Y_t,\qquad
 p=\nabla v,\qquad
 \pi=\nabla Y+F-\beta qp.
\end{equation}
With these definitions, \(a\) is the wave component, whereas \(p\) and
\(\pi\) satisfy first-order evolution equations. The vector source \(F\)
remains undifferentiated in the resulting lower-order terms.

\begin{lemma}\label{lem:acoustic-id}
Assume \eqref{eq:acoustic-support}, \eqref{eq:general-Lq}, and
\eqref{eq:acoustic-regularity}. Then the variables defined in
\eqref{eq:acoustic-vars} satisfy
\begin{equation}\label{eq:acoustic-system}
\begin{cases}
 a_t-\diver\pi=G,\\
 \pi_t+\beta q\pi-\nabla a=R,\\
 p_t+\beta(1-q)p=\pi-F,
\end{cases}
\qquad
R=\beta^2q(1-q)p+F_t+\beta qF.
\end{equation}
\end{lemma}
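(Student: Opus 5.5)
The plan is to verify the three equations in \eqref{eq:acoustic-system} by direct substitution of the definitions \eqref{eq:acoustic-vars}. I will first check that every term makes sense as an element of \(L^\infty(0,T;L^2(\R^3))\) or \(L^2(Q_T)\), so that the identities can be read almost everywhere in time. By \eqref{eq:acoustic-regularity}, \(v\in L^\infty(0,T;H^5)\) and \(v_t,v_{tt}\in L^\infty(0,T;H^3)\). Hence \(Y=v_t+\beta v\) and \(a=v_{tt}+\beta v_t\) lie in \(L^\infty(0,T;H^3)\), and \(a_t=v_{ttt}+\beta v_{tt}\in L^\infty(0,T;H^1)\). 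Also \(p=\nabla v\), \(p_t=\nabla v_t\) and \(\nabla Y\) belong to \(L^\infty(0,T;H^2)\). Since \(q\in W^{5,\infty}\), the products \(qp\) and \(q\pi\) inherit this regularity up to the contribution of \(F\). The field \(F\) enters \(\pi\) only undifferentiated in space and through \(F_t\in L^2(Q_T)\). This is exactly what \(F\in H^1(0,T;L^2)\) supplies, so all time derivatives below are taken in the distributional sense in \(t\), with values in \(L^2\) or \(H^{-1}\).

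For the first equation, compute \(\diver\pi=\Delta v_t+\beta\Delta v+\diver F-\beta\diver(q\nabla v)\). Then
\[
 a_t-\diver\pi
 =v_{ttt}+\beta v_{tt}-\Delta v_t-\beta\diver((1-q)\nabla v)-\diver F
 =L_qv-\diver F,
\]
which equals \(G\) by \eqref{eq:general-Lq}. Here \(\beta\Delta v-\beta\diver(q\nabla v)=\beta\diver((1-q)\nabla v)\) has been combined. For the third equation, compute
\[
 p_t+\beta(1-q)p=\nabla(v_t+\beta v)-\beta qp=\nabla Y-\beta qp,
\]
which equals \(\pi-F\) by definition of \(\pi\). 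Note that \(q\) is time-independent and no derivative of \(q\) appears.

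For the second equation, differentiate \(\pi\) in time: \(\pi_t=\nabla Y_t+F_t-\beta qp_t=\nabla a+F_t-\beta qp_t\). Substitute the third equation in the form \(p_t=\pi-F-\beta(1-q)p\). This gives
\[
 \pi_t=\nabla a+F_t-\beta q\pi+\beta qF+\beta^2q(1-q)p ,
\]
that is, \(\pi_t+\beta q\pi-\nabla a=R\) with \(R\) as stated.

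I expect no real obstacle. The only delicate point is bookkeeping of regularity: \(\pi_t\) contains \(F_t\), which is only in \(L^2(Q_T)\), so the second equation must be understood in \(L^2(Q_T)\) rather than pointwise in time. This is harmless because it is a linear identity between distributions, valid once each side is shown to be a well-defined distribution. Finally, since all fields are supported in \(\mathcal K\) by \eqref{eq:acoustic-support}, the identities hold on all of \(Q_T\).
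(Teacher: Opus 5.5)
Your proposal is correct and follows the paper's proof essentially verbatim. The first equation comes from \eqref{eq:general-Lq} after combining $\beta\Delta v-\beta\diver(q\nabla v)=\beta\diver((1-q)\nabla v)$, the third from $p_t=\nabla Y-\beta p=\pi-F-\beta(1-q)p$, and the second by differentiating $\pi$ in time and substituting this expression for $p_t$, with your extra regularity bookkeeping (reading the second equation in $L^2(Q_T)$ because of $F_t$) being a harmless supplement the paper leaves implicit.
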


\begin{proof}
The first equation in \eqref{eq:acoustic-system} follows from
\eqref{eq:general-Lq} and the definition of \(\pi\) in
\eqref{eq:acoustic-vars}. Moreover,
\[
 p_t
 =\nabla Y-\beta p
 =\pi-F-\beta(1-q)p,
\]
which gives the third equation. Differentiating the definition of \(\pi\)
in \eqref{eq:acoustic-vars} and substituting the preceding identity for
\(p_t\) gives the second equation.
\end{proof}

The initial conditions in \eqref{eq:general-Lq} and the definitions
\eqref{eq:acoustic-vars} give
\[
 a(0)=h,\qquad \pi(0)=p(0)=0.
\]
It remains to determine \(a_t(0)\). The regularity
\eqref{eq:acoustic-regularity} implies
\[
 v,v_t\in C([0,T];H^3(\R^3)),
 \qquad
 v_{tt},v_{ttt}\in C([0,T];H^1(\R^3)),
\]
and
\[
 F,G\in C([0,T];L^2(\R^3)).
\]
Hence every term in \eqref{eq:general-Lq} is continuous with values in
\(H^{-1}(\R^3)\), and \eqref{eq:general-Lq} may be evaluated at \(t=0\).
Using the zero initial traces of \(v,v_t,F\), and \(G\) gives
\[
 v_{ttt}(0)+\beta v_{tt}(0)=0.
\]
Since \(a_t=v_{ttt}+\beta v_{tt}\), it follows that
\begin{equation}\label{eq:acoustic-initial}
 a(\cdot,0)=h,\qquad
 \pi(\cdot,0)=p(\cdot,0)=0,\qquad
 a_t(\cdot,0)=0.
\end{equation}

The regularity in \eqref{eq:acoustic-regularity} further gives
\begin{equation}\label{eq:acoustic-field-regularity}
 a\in L^\infty(0,T;H^3),\qquad
 a_t,a_{tt}\in L^\infty(0,T;H^1),\qquad
 p\in L^\infty(0,T;H^4),\qquad
 \pi,R\in L^2(Q_T).
\end{equation}
Since \(\nabla a\in L^2(Q_T)\), the last two equations in
\eqref{eq:acoustic-system} imply
\[
 p,\pi\in W^{1,2}(0,T;L^2(\R^3)).
\]
Thus the initial values in \eqref{eq:acoustic-initial} are strong
\(L^2\)-traces.

\subsection{Scalar Carleman estimate and temporal localization}

The wave component \(a\) will be estimated by a scalar Carleman inequality.
The estimate is first stated on the symmetric interval \((-T,T)\), after
which the acoustic fields are extended across \(t=0\) and localized away
from \(t=\pm T\).

\begin{lemma}
\label{lem:scalar-Carleman}
Suppose that \(x_0\notin\overline{\Omega_2}\), \(0<\gamma<1\), and
\(\phi\) is the weight defined in \eqref{eq:phi}. Define
\begin{equation}\label{eq:conjugated-wave-operator}
 L_{\phi,s}Z
 =
 e^{s\phi}(\partial_t^2-\Delta)(e^{-s\phi}Z).
\end{equation}
There exist \(s_0\ge1\) and \(C>0\), depending only on
\(T,\gamma\) and the fixed geometry of \(\Omega_2\) and \(x_0\), such that
\begin{equation}\label{eq:scalar-Carleman}
 s\norm{\partial_tZ-s\phi_tZ}_{L^2}^2
 +s\norm{\nabla Z-s\nabla\psi Z}_{L^2}^2
 +s^3\norm Z_{L^2}^2
 \le C\norm{L_{\phi,s}Z}_{L^2}^2
\end{equation}
for every \(s\ge s_0\) and
\(Z\in C_c^\infty(\Omega_2\times(-T,T))\). The spatial weight \(\psi\) is defined in \eqref{eq:psi}. The norms in
\eqref{eq:scalar-Carleman} are taken over
\(\Omega_2\times(-T,T)\).
\end{lemma}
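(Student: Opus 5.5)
The plan is the standard conjugation-and-expansion proof of a Carleman estimate for \(\partial_t^2-\Delta\) with the pseudoconvex weight \(\phi=\psi-\gamma t^2\), \(\psi=|x-x_0|^2\). Write \(P=\partial_t^2-\Delta\) and \(Z=e^{s\phi}z\). Then
\[
L_{\phi,s}Z=P Z-2s(\phi_tZ_t-\nabla\psi\cdot\nabla Z)+s^2(\phi_t^2-|\nabla\psi|^2)Z-s(\phi_{tt}-\Delta\psi)Z .
\]
I will split this as \(L_{\phi,s}Z=P_1Z+P_2Z\). The symmetric part is
\[
P_1Z=Z_{tt}-\Delta Z+s^2(\phi_t^2-|\nabla\psi|^2)Z,
\]
and the antisymmetric part is
\[
P_2Z=-2s(\phi_tZ_t-\nabla\psi\cdot\nabla Z)-s\sigma Z,
\]
with \(\sigma=\phi_{tt}-\Delta\psi=-2\gamma-6\) (note \(\Delta\psi=6\) in \(\R^3\)). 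A multiple \(\lambda s Z\) will be moved between the two parts to balance the zeroth-order terms, with \(\lambda\) fixed later. The inequality follows from
\[
\|L_{\phi,s}Z\|^2\ge 2\langle P_1Z,P_2Z\rangle ,
\]
so the whole proof reduces to showing that the cross term dominates the left-hand side of \eqref{eq:scalar-Carleman}. No boundary terms arise because \(Z\in C_c^\infty(\Omega_2\times(-T,T))\).

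Next I compute \(2\langle P_1Z,P_2Z\rangle\) by integrating by parts. The standard identities give a first-order quadratic form
\[
s\int\Bigl(-2\phi_{tt}Z_t^2+4\,\nabla^2\psi(\nabla Z,\nabla Z)+\text{mixed }\nabla_{x,t}\phi\text{ terms}\Bigr)+\text{balancing terms from }\lambda ,
\]
plus a zeroth-order term
\[
s^3\int\Bigl(\ldots\Bigr)Z^2 ,
\]
whose coefficient, in terms of \(\nabla\psi\), \(\phi_t=-2\gamma t\), and the Hessians, is
\[
-2\bigl(\phi_{tt}\phi_t^2-\nabla^2\psi(\nabla\psi,\nabla\psi)\bigr)+\text{divergence corrections}
\]
(up to normalisation), plus \(O(s)\) lower-order terms. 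The point is that \(\psi\) is convex, \(\nabla^2\psi=2I\), and \(\phi_{tt}=-2\gamma\). The first-order form is then at least
\[
s\int\bigl(4\gamma Z_t^2+8|\nabla Z|^2\bigr)
\]
before the \(\lambda\)-shift. With \(\lambda\) chosen between the time and space coefficients, the shift makes both \(Z_t^2\) and \(|\nabla Z|^2\) carry coefficients of order \(\min(\gamma,1)\), which is positive because \(0<\gamma<1\).

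The cubic coefficient reduces, after the \(\lambda\)-correction, to a positive multiple of \(|\nabla\psi|^2-\gamma^2t^2\) plus convexity terms. Here \(|\nabla\psi|=2|x-x_0|\), and \(|x-x_0|\ge\operatorname{dist}(x_0,\Omega_2)>0\) on \(\Omega_2\), so the cubic term is bounded below by \(c\,s^3\|Z\|^2\) (possibly after using \(\gamma<1\) and the bounded time range to control the \(t\)-contributions). The \(O(s^2)\) and \(O(s)\) zeroth-order leftovers are then absorbed for \(s\ge s_0\).

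Finally I convert back to the conjugated gradients. The quantities \(\partial_tZ-s\phi_tZ\) and \(\nabla Z-s\nabla\psi Z\) differ from \(Z_t\) and \(\nabla Z\) by \(O(s)Z\), and these contributions are controlled by \(s\cdot s^2\|Z\|^2\), which is already dominated by the \(s^3\) term. This yields \eqref{eq:scalar-Carleman}.

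I expect the main obstacle to be the positivity of the cubic coefficient simultaneously with positivity of the gradient form. The time part of the weight, \(-\gamma t^2\), makes the time contribution have the wrong sign unless \(\lambda\) is chosen carefully, and the standard condition is exactly \(\gamma<1\) combined with \(x_0\notin\overline{\Omega_2}\). I would follow the classical Imanuvilov / Bellassoued–Yamamoto computation for the weight \(|x-x_0|^2-\gamma t^2\). If the pointwise cubic positivity fails for large \(|t|\) when \(\gamma\) is near \(1\), I would use an exponentiated weight \(e^{\lambda\phi}\) in the computation; the present linear weight would then be recovered only if the constants allow, so I would cite the known result for the final form.
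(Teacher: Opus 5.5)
There is a genuine gap, and it is at the step you flagged. Your $P_1,P_2$ coincide with the paper's $A_s,B_s$, so $2\ip{P_1Z}{P_2Z}=\ip{[A_s,B_s]Z}{Z}$. This equals $8s\bigl(\norm{\nabla Z}^2-\gamma\norm{Z_t}^2\bigr)+32s^3\int(|y|^2-\gamma^3t^2)|Z|^2$, with $y=x-x_0$. So the $Z_t^2$ coefficient is $4s\phi_{tt}=-8\gamma s$, not $+4\gamma s$.

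Normalize a constant shift as adding $s\lambda\ip{L_{\phi,s}Z}{Z}$. The gradient form becomes $(8+\lambda)|\nabla Z|^2-(8\gamma+\lambda)|Z_t|^2$, which is coercive only for $-8<\lambda<-8\gamma$. The cubic coefficient becomes $4\bigl((8-\lambda)|y|^2-(8\gamma-\lambda)\gamma^2t^2\bigr)$. For such $\lambda$ it is positive only where $\gamma^2t^2/|y|^2<(8-\lambda)/(8\gamma-\lambda)$, and this threshold is always below $(1+\gamma)/(2\gamma)$.

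The lemma is asserted for every $T$, and the theorem needs $T$ large through \eqref{eq:long-time}. So $\Omega_2\times(-T,T)$ can contain points with $\gamma|t|$ large compared with $|x-x_0|$, where no constant $\lambda$ makes both forms positive. This is a sign obstruction, so the bounded time range cannot repair it.

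Your fallback does not close the gap either. An exponentiated weight proves a different inequality, and citing the literature is not a proof. The later sections need the linear weight itself: they use $e^{s\phi}=e^{s\psi}e^{-s\gamma t^2}$ in Lemma~\ref{lem:Volterra} and the gap \eqref{eq:weight-gap}.

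The missing idea is a multiplier that varies across the cone $\gamma|t|=|y|$. The paper takes $\lambda=\lambda(\varrho)$ with $\varrho=\gamma^2t^2/|y|^2$, equal to $-b_\gamma$ for $\varrho\le1$ and to $+b_\gamma$ for $\varrho\ge1+\varepsilon_\gamma$, where $b_\gamma=4(1+\gamma)$. Then $V_\lambda\ge16(1-\gamma)d^2$ on the whole cylinder. The derivatives of $\lambda$ only produce $O(s+s^2)\norm{Z}^2$, which the $s^3$ term absorbs.

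The price is that for $\varrho\ge1$ the gradient form $\mathfrak q_\lambda$ is indefinite. It is positive only on the hyperplane $y\cdot\xi+\gamma t\tau=0$, where $|\tau|\le|\xi|$. The component along $(y,\gamma t)$ is recovered from $\norm{B_sZ}^2$ through the identity $y\cdot\nabla Z+\gamma tZ_t=(4s)^{-1}B_sZ-\tfrac{3+\gamma}{2}Z$.

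This is why the paper keeps the full identity $\norm{L_{\phi,s}Z}^2=\norm{A_sZ}^2+\norm{B_sZ}^2+\ip{[A_s,B_s]Z}{Z}$. Your reduction $\norm{L_{\phi,s}Z}^2\ge2\ip{P_1Z}{P_2Z}$ discards exactly the term $\norm{P_2Z}^2$ that the argument needs in the region $\gamma|t|>|x-x_0|$.
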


\begin{proof}
A direct proof is given for the quadratic weight \(\phi\). General
Carleman estimates of this type are discussed in
\cite[Chapter~28]{HormanderIV}. Set
\[
 y=x-x_0,\qquad
 d=\dist(x_0,\overline{\Omega_2})>0,
\]
and define
\begin{equation}\label{eq:Carleman-splitting}
\begin{aligned}
 A_s&=\partial_t^2-\Delta
       +4s^2(\gamma^2t^2-|y|^2),\\
 B_s&=4s(\gamma t\partial_t+y\cdot\nabla)
       +2s(3+\gamma).
\end{aligned}
\end{equation}
By \eqref{eq:conjugated-wave-operator},
\[
 L_{\phi,s}=A_s+B_s.
\]
On compactly supported functions, \(A_s\) is formally self-adjoint and
\(B_s\) is formally skew-adjoint. Their commutator is
\[
 [A_s,B_s]
 =8s\gamma\partial_t^2-8s\Delta
   +32s^3(|y|^2-\gamma^3t^2).
\]
Consequently, \begin{equation}\label{eq:Carleman-basic-identity}
\begin{aligned}
\norm{L_{\phi,s}Z}_{L^2}^2
={}&\norm{A_sZ}_{L^2}^2+\norm{B_sZ}_{L^2}^2\\
&+8s\bigl(\norm{\nabla Z}_{L^2}^2
-\gamma\norm{Z_t}_{L^2}^2\bigr)\\
&+32s^3\int_{-T}^{T}\int_{\Omega_2}
\bigl(|y|^2-\gamma^3t^2\bigr)|Z(x,t)|^2
\dd x\dd t.
\end{aligned}
\end{equation}

The coefficient of \(|Z|^2\) in
\eqref{eq:Carleman-basic-identity} need not remain positive on a long time
interval. Set
\[
 b_\gamma=4(1+\gamma),\qquad
 \varepsilon_\gamma=\frac{1-\gamma}{1+3\gamma},\qquad
 \varrho=\frac{\gamma^2t^2}{|y|^2}.
\]
Choose a smooth real-valued function \(\lambda=\lambda(\varrho)\) such that
\[
 |\lambda|\le b_\gamma,\qquad
 \lambda=-b_\gamma\quad(\varrho\le1),\qquad
 \lambda=b_\gamma\quad(\varrho\ge1+\varepsilon_\gamma).
\]
The function \(\lambda\) and its first two derivatives are bounded on
\(\overline{\Omega_2}\times[-T,T]\), independently of \(s\).

Define
\begin{equation}\label{eq:Carleman-forms}
\begin{aligned}
 \mathfrak q_\lambda(\xi,\tau)
   &=(8+\lambda)|\xi|^2-(8\gamma+\lambda)|\tau|^2,\quad  V_\lambda
   =4\bigl((8-\lambda)|y|^2
             +\gamma^2(\lambda-8\gamma)t^2\bigr),
\end{aligned}
\end{equation}
and
\begin{equation}\label{eq:Carleman-remainder-definition}
\begin{aligned}
\mathcal E_s
={}&\frac{s}{2}\int_{-T}^{T}\int_{\Omega_2}
(\lambda_{tt}-\Delta\lambda)|Z(x,t)|^2
\dd x\dd t\\
&-2s^2\int_{-T}^{T}\int_{\Omega_2}
(\gamma t\lambda_t+y\cdot\nabla\lambda)
|Z(x,t)|^2\dd x\dd t.
\end{aligned}
\end{equation}
The boundedness of the derivatives of \(\lambda\) on the fixed cylinder
implies
\begin{equation}\label{eq:Carleman-error-bound}
 |\mathcal E_s|
 \le C(s+s^2)\norm Z_{L^2}^2.
\end{equation}

Adding
\(s\Re\ip{L_{\phi,s}Z}{\lambda Z}\) to
\eqref{eq:Carleman-basic-identity} and integrating by parts over
\(\Omega_2\times(-T,T)\), we obtain
\begin{equation}\label{eq:Carleman-modified-identity}
\begin{aligned}
&\norm{L_{\phi,s}Z}_{L^2}^2
+s\Re\ip{L_{\phi,s}Z}{\lambda Z}\\
&\quad=
\norm{A_sZ}_{L^2}^2+\norm{B_sZ}_{L^2}^2\\
&\qquad
+s\int_{-T}^{T}\int_{\Omega_2}
\mathfrak q_\lambda(\nabla Z,Z_t)\dd x\dd t\\
&\qquad
+s^3\int_{-T}^{T}\int_{\Omega_2}
V_\lambda(x,t)|Z(x,t)|^2\dd x\dd t
+\mathcal E_s.
\end{aligned}
\end{equation}
Since \(Z\in C_c^\infty(\Omega_2\times(-T,T))\), no boundary terms arise in the integrations by parts.

For \(V_\lambda\) in \eqref{eq:Carleman-forms}, if
\(\varrho\le1\), then
\[
 \frac{V_\lambda}{4|y|^2}
 =(8+b_\gamma)-(8\gamma+b_\gamma)\varrho
 \ge8(1-\gamma).
\]
For \(1\le\varrho\le1+\varepsilon_\gamma\), observe that
\[
 \frac{V_\lambda}{4|y|^2}
 =8-8\gamma\varrho+\lambda(\varrho-1).
\]
Since \(\varrho-1\ge0\) and \(\lambda\ge-b_\gamma\), replacing
\(\lambda\) by \(-b_\gamma\) gives a lower bound. This lower bound
is decreasing in \(\varrho\) and equals \(4(1-\gamma)\) at
\(\varrho=1+\varepsilon_\gamma\). If
\(\varrho\ge1+\varepsilon_\gamma\), then
\[
 \frac{V_\lambda}{4|y|^2}
 =(8-b_\gamma)+(b_\gamma-8\gamma)\varrho
 \ge4(1-\gamma).
\]
Since \(|y|\ge d\) on \(\Omega_2\),
\begin{equation}\label{eq:Vlambda-positive}
 V_\lambda\ge16(1-\gamma)d^2.
\end{equation}

Next consider \(\mathfrak q_\lambda\) in
\eqref{eq:Carleman-forms}. For \(\varrho\le1\),
\[
 \mathfrak q_\lambda(\xi,\tau)
 =
 4(1-\gamma)(|\xi|^2+|\tau|^2).
\]
For \(\varrho\ge1\), restrict first to the kernel of
\[
 (\xi,\tau)\longmapsto y\cdot\xi+\gamma t\tau.
\]
On this kernel,
\[
 |\tau|^2\le\frac{|\xi|^2}{\varrho}\le|\xi|^2.
\]
If \(8\gamma+\lambda\ge0\), then
\[
 \mathfrak q_\lambda(\xi,\tau)
 \ge8(1-\gamma)|\xi|^2,
\]
whereas if \(8\gamma+\lambda<0\),
\[
 \mathfrak q_\lambda(\xi,\tau)
 \ge(8-b_\gamma)|\xi|^2
 =4(1-\gamma)|\xi|^2.
\]
Because \(|\tau|^2\le|\xi|^2\) on this kernel, both cases give
\(\mathfrak q_\lambda(\xi,\tau)\ge
2(1-\gamma)(|\xi|^2+|\tau|^2)\).
To extend this lower bound to arbitrary \((\xi,\tau)\), put
\(n=(y,\gamma t)\) and write
\((\xi,\tau)=v_\perp+\alpha n\), where
\(n\cdot v_\perp=0\) and
\(\alpha=n\cdot(\xi,\tau)/|n|^2\). The coefficients of
\(\mathfrak q_\lambda\) are uniformly bounded on the fixed cylinder,
while \(|n|\ge d\). Applying Young's inequality to the mixed term in the expansion of
\(\mathfrak q_\lambda(v_\perp+\alpha n)\) gives constants
\(c,C>0\), independent of \(x,t\), such that
\begin{equation}\label{eq:qlambda-pointwise}
 \mathfrak q_\lambda(\xi,\tau)
 \ge
 c(|\xi|^2+|\tau|^2)
 -C|y\cdot\xi+\gamma t\tau|^2.
\end{equation}
Here the negative term controls only the component of
\((\xi,\tau)\) parallel to \(n\), since
\(|\alpha n|^2=|n\cdot(\xi,\tau)|^2/|n|^2\).

Furthermore,
\begin{equation}\label{eq:Bs-directional-identity}
 y\cdot\nabla Z+\gamma tZ_t
 =
 \frac{1}{4s}B_sZ-\frac{3+\gamma}{2}Z.
\end{equation}
Applying \eqref{eq:qlambda-pointwise} to the real and imaginary parts of
\((\nabla Z,Z_t)\), using
\eqref{eq:Bs-directional-identity}, and integrating gives
\begin{equation}\label{eq:qlambda-integrated}
\begin{aligned}
&s\int_{-T}^{T}\int_{\Omega_2}
\mathfrak q_\lambda(\nabla Z,Z_t)\dd x\dd t\\
&\qquad\ge
cs\bigl(\norm{\nabla Z}_{L^2}^2+\norm{Z_t}_{L^2}^2\bigr)
-Cs^{-1}\norm{B_sZ}_{L^2}^2
-Cs\norm Z_{L^2}^2.
\end{aligned}
\end{equation}

Substituting
\eqref{eq:Carleman-error-bound},
\eqref{eq:Vlambda-positive}, and
\eqref{eq:qlambda-integrated} into
\eqref{eq:Carleman-modified-identity} gives a positive contribution
\(c s^3\|Z\|_{L^2}^2\) from \eqref{eq:Vlambda-positive} and a term
\(c s(\|\nabla Z\|_{L^2}^2+\|Z_t\|_{L^2}^2)\) from
\eqref{eq:qlambda-integrated}. The negative term
\(Cs^{-1}\|B_sZ\|_{L^2}^2\) is absorbed by
\(\|B_sZ\|_{L^2}^2\) for sufficiently large \(s\), while the
\(O(s^2)\|Z\|_{L^2}^2\) terms are absorbed by the positive
\(s^3\|Z\|_{L^2}^2\) contribution. Thus, after increasing \(s_0\),
\begin{equation}\label{eq:Carleman-absorbed-identity}
 s\bigl(\norm{\nabla Z}_{L^2}^2+\norm{Z_t}_{L^2}^2\bigr)
 +s^3\norm Z_{L^2}^2
 \le C\norm{L_{\phi,s}Z}_{L^2}^2
 +Cs\bigl|\Re\ip{L_{\phi,s}Z}{\lambda Z}\bigr|.
\end{equation}
The last term satisfies
\begin{equation}\label{eq:Carleman-cross-term}
 s\bigl|\Re\ip{L_{\phi,s}Z}{\lambda Z}\bigr|
 \le
 \frac12\norm{L_{\phi,s}Z}_{L^2}^2
 +\frac12b_\gamma^2s^2\norm Z_{L^2}^2.
\end{equation}
After increasing \(s_0\) once more, the last term on the right can be
absorbed, and
\begin{equation}\label{eq:unshifted-Carleman}
 s\bigl(\norm{\nabla Z}_{L^2}^2+\norm{Z_t}_{L^2}^2\bigr)
 +s^3\norm Z_{L^2}^2
 \le C\norm{L_{\phi,s}Z}_{L^2}^2.
\end{equation}
Since \(\phi_t\) and \(\nabla\psi\) are bounded on the fixed cylinder,
\eqref{eq:unshifted-Carleman} implies
\eqref{eq:scalar-Carleman}.
\end{proof}

In the subsequent estimates, a threshold denoted by \(s_0\) may be
enlarged; each such threshold depends only on the fixed a priori
parameters and is independent of the particular \(q\in\Qad\).

The scalar Carleman estimate \eqref{eq:scalar-Carleman} is stated on
\((-T,T)\), whereas the acoustic system \eqref{eq:acoustic-system}
is given on \((0,T)\). For a space--time field \(b=b(x,t)\) defined
on \(\R^3\times(0,T)\), write
\[
 b^e(x,t)=b(x,|t|),\qquad
 b^o(x,t)=\operatorname{sgn}(t)b(x,|t|),
 \quad -T<t<T.
\]
Set \(\sigma(t)=\operatorname{sgn}(t)\). Fix the parameter \(\delta\)
from Section~\ref{sec:setting}, and choose an even function
\(\eta\in C_c^\infty((-T,T))\), with \(0\le\eta\le1\), such that
\begin{equation}\label{eq:temporal-cutoff}
 \eta=1\quad\text{for }|t|\le T-2\delta,
 \qquad
 \eta=0\quad\text{for }|t|\ge T-\delta.
\end{equation}
Define the reflected and temporally localized fields by
\begin{equation}\label{eq:eta-fields}
 a_\eta=\eta a^e,\qquad
 \pi_\eta=\eta\pi^o,\qquad
 G_\eta=\eta G^o+\eta'a^e,\qquad
 R_\eta=\eta R^e+\eta'\pi^o.
\end{equation}
For \(0<t<T\), also set
\begin{equation}\label{eq:localized-p-F}
 p_\eta=\eta p,\qquad F_\eta=\eta F.
\end{equation}
The terms generated by the derivatives of \(\eta\) are measured by
\begin{equation}\label{eq:terminal-remainder}
\begin{aligned}
 \mathcal L_s={}&
 \norm{\eta'p}_s^2+\norm{\eta'\pi}_s^2
 +\norm{\eta'F}_s^2+\norm{\eta'a}_s^2\\
 &+\norm{\eta''a+\eta'a_t+\eta'G}_s^2.
\end{aligned}
\end{equation}
The norms in \eqref{eq:terminal-remainder} are taken over \(Q_T\)
according to \eqref{eq:weighted-norms}. Every term in
\eqref{eq:terminal-remainder} is supported in
\(T-2\delta\le t\le T-\delta\).

The following lemma establishes the equations and initial conditions satisfied by the reflected and temporally localized fields, together with estimates for the localized source terms in terms of \(\mathcal F_s\), \(\mathcal G_s\), and \(\mathcal L_s\).

\begin{lemma}
\label{lem:temporal-localization}
Assume \eqref{eq:acoustic-support}, \eqref{eq:general-Lq}, and
\eqref{eq:acoustic-regularity}, with arbitrary \(q\in\Qad\).
Let \(a^e,R^e\) be the even time extensions of \(a,R\), and let
\(\pi^o,G^o\) be the odd time extensions of \(\pi,G\), as defined
before \eqref{eq:temporal-cutoff}. For the cutoff \(\eta\) specified
in \eqref{eq:temporal-cutoff}, let
\(a_\eta,\pi_\eta,G_\eta,R_\eta\) be defined by
\eqref{eq:eta-fields} on \((-T,T)\), and let
\(p_\eta,F_\eta\) be defined by \eqref{eq:localized-p-F}
on \((0,T)\).

The extensions \(a^e,\pi^o,G^o,R^e\) satisfy, in the sense of
distributions on \(\R^3\times(-T,T)\),
\begin{equation}\label{eq:extended-system}
(a^e)_t-\diver\pi^o=G^o,
\qquad
(\pi^o)_t+\beta q\sigma\pi^o-\nabla a^e=R^e.
\end{equation}
The temporally localized fields satisfy
\begin{equation}\label{eq:temporally-localized-system}
 (a_\eta)_t-\diver\pi_\eta=G_\eta,
 \qquad
 (\pi_\eta)_t+\beta q\sigma\pi_\eta-\nabla a_\eta=R_\eta,
\end{equation}
with initial traces
\begin{equation}\label{eq:temporally-localized-traces}
 a_\eta(\cdot,0)=h,\qquad
 (a_\eta)_t(\cdot,0)=0,\qquad
 \pi_\eta(\cdot,0)=G_\eta(\cdot,0)=0.
\end{equation}
In addition, the localized wave component satisfies
\begin{equation}\label{eq:localized-scalar-a}
 (a_\eta)_{tt}-\Delta a_\eta+\beta q\sigma(a_\eta)_t
 =
 \diver R_\eta
 -\beta\sigma\nabla q\cdot\pi_\eta
 +(G_\eta)_t+\beta q\sigma G_\eta.
\end{equation}
On \((0,T)\), the localized vector fields satisfy
\begin{equation}\label{eq:p-eta}
 (p_\eta)_t+\beta(1-q)p_\eta
 =\pi_\eta-F_\eta+\eta'p,
\end{equation}
and
\begin{equation}\label{eq:R-eta}
 R_\eta
 =\beta^2q(1-q)p_\eta
 +(F_\eta)_t+\beta qF_\eta
 +\eta'(\pi-F).
\end{equation}
For every \(s\ge1\), the temporally localized sources satisfy
\begin{equation}\label{eq:localized-G-bound}
 \norm{e^{s\phi}G_\eta}_{L^2(Q_T)}^2
 +\norm{e^{s\phi}(G_\eta)_t}_{L^2(Q_T)}^2
 \le C(\mathcal G_s+\mathcal L_s),
\end{equation}
and
\begin{equation}\label{eq:localized-F-bound}
 \norm{F_\eta}_s^2+\norm{(F_\eta)_t}_s^2
 \le C(\mathcal F_s+\mathcal L_s),
\end{equation}
where \(\mathcal F_s\) and \(\mathcal G_s\) are formed from the
original fields \(F,G\) in \eqref{eq:weighted-norms}, and
\(\mathcal L_s\) is defined in \eqref{eq:terminal-remainder}.
The constant \(C\) depends on the fixed cutoff \(\eta\) and is
independent of \(q\in\Qad\) and \(s\).
\end{lemma}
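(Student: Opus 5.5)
The argument is a direct verification in three stages: first the reflected system \eqref{eq:extended-system}, then the effect of multiplying by \(\eta\), and finally the weighted bounds.

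\emph{Reflected system.} On \((0,T)\), \eqref{eq:extended-system} is just \eqref{eq:acoustic-system}, since \(\sigma=1\) there. For \(t<0\), set \(t'=-t\).
\begin{itemize}
\item \((a^e)_t(x,t)=-a_t(x,t')\) and \(\diver\pi^o(x,t)=-\diver\pi(x,t')\). Hence \((a^e)_t-\diver\pi^o=-G(t')=G^o\).
\item \((\pi^o)_t(x,t)=\pi_t(x,t')\), \(\beta q\sigma\pi^o=\beta q\pi(t')\), and \(\nabla a^e=\nabla a(t')\). Hence the second equation gives \(R(t')=R^e\).
\end{itemize}
To pass to distributions on \(\R^3\times(-T,T)\), we must check that no singular term appears at \(t=0\).
\begin{itemize}
\item The even extension \(a^e\) is \(W^{1,2}\) in time, because \(a\in W^{1,\infty}(0,T;H^1)\) by \eqref{eq:acoustic-field-regularity}. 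Its derivative \((a^e)_t\) is the odd extension of \(a_t\), which produces no jump term.
\item The odd extension \(\pi^o\) of \(\pi\in W^{1,2}(0,T;L^2)\) has distributional derivative equal to the even extension of \(\pi_t\) plus \(2\pi(0)\delta_{t=0}\). This extra term vanishes because \(\pi(0)=0\) by \eqref{eq:acoustic-initial}.
\end{itemize}
Since \(\pi(0)=0\) and \(G(0)=0\), the odd extensions \(\pi^o\) and \(G^o\) are continuous across \(t=0\). This gives \eqref{eq:extended-system}.

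\emph{Multiplication by \(\eta\).} Multiplying \eqref{eq:extended-system} by \(\eta\) generates \(\eta'a^e\) and \(\eta'\pi^o\). These are exactly the extra terms in \(G_\eta\) and \(R_\eta\) in \eqref{eq:eta-fields}, so \eqref{eq:temporally-localized-system} follows; note that \(\eta\) is time-independent of \(x\) and commutes with \(q\sigma\). For the traces \eqref{eq:temporally-localized-traces}:
\begin{itemize}
\item \(\eta(0)=1\) and \(\eta'(0)=0\), since \(\eta\) is even and equals one near \(0\).
\item Hence \(a_\eta(0)=h\), \((a_\eta)_t(0)=a_t(0)=0\), \(\pi_\eta(0)=0\), and \(G_\eta(0)=G^o(0)=0\).
\end{itemize}
To obtain \eqref{eq:localized-scalar-a}, apply \(\partial_t\) to the first equation of \eqref{eq:temporally-localized-system}. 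Then substitute \(\diver(\pi_\eta)_t=\diver R_\eta-\beta\diver(q\sigma\pi_\eta)+\Delta a_\eta\) from the second equation. Expand \(\diver(q\sigma\pi_\eta)=\sigma\nabla q\cdot\pi_\eta+q\sigma\diver\pi_\eta\), and replace \(\diver\pi_\eta=(a_\eta)_t-G_\eta\). The factor \(\sigma\) is constant on each half-interval, and the \(t=0\) contributions cancel because \(\pi_\eta(0)=0\). This is the one point requiring care: I would verify it by working in distributions and testing against \(\varphi\in C_c^\infty\).

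\emph{Identities \eqref{eq:p-eta} and \eqref{eq:R-eta}.} Identity \eqref{eq:p-eta} comes from multiplying the third equation of \eqref{eq:acoustic-system} by \(\eta\). For \eqref{eq:R-eta}, write \(R_\eta=\eta R+\eta'\pi\) on \((0,T)\) and use \(\eta F_t=(F_\eta)_t-\eta'F\).

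\emph{Weighted bounds.} For \eqref{eq:localized-G-bound}, first note that the weight \(\phi\) is even in \(t\). Next compute \((G_\eta)_t=\eta'G^o+\eta G^o_t+\eta''a^e+\eta'a^e_t\). The weighted norms of the reflected terms over \((-T,T)\) are twice the corresponding norms over \(Q_T\). Therefore \(\eta G\) and \(\eta G_t\) are bounded by \(\mathcal G_s\), using \(0\le\eta\le1\). The terms \(\eta'a\) and \(\eta''a+\eta'a_t+\eta'G\) are exactly components of \(\mathcal L_s\), and the remaining \(\eta'G\) is controlled by \(\mathcal G_s\) with a constant depending on \(\norm{\eta'}_\infty\).

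For \eqref{eq:localized-F-bound}, write \((F_\eta)_t=\eta F_t+\eta'F\). The first term is bounded by \(\mathcal F_s\) and the second is a component of \(\mathcal L_s\). All constants depend only on \(\eta\), and are independent of \(q\) and \(s\).
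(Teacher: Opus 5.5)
Your proposal is correct and follows essentially the same route as the paper. You verify the reflected system on each half-interval and use \(\pi(0)=0\) (together with \(G(0)=0\) and \(a_t(0)=0\)) to exclude terms supported at \(t=0\); you then multiply by \(\eta\) and read off the traces, the identities \eqref{eq:p-eta}--\eqref{eq:R-eta}, and the weighted bounds from the definitions. The step you flag as delicate in deriving \eqref{eq:localized-scalar-a} is automatic: once \eqref{eq:extended-system} holds in distributions on \(\R^3\times(-T,T)\), you only apply \(\partial_t\) to the first equation and \(\diver\) to the second, so \(\sigma\) is never differentiated, as the paper remarks.
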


\begin{proof}
For \(t<0\), the first equality in \eqref{eq:extended-system} is
obtained by taking the negative of the first equation in
\eqref{eq:acoustic-system} at \(-t\); the second equality is the
second equation at \(-t\). The regularity
\eqref{eq:acoustic-field-regularity} gives
\(a\in W^{2,2}(0,T;L^2)\) and
\(\pi\in W^{1,2}(0,T;L^2)\), while
\(G\in W^{1,2}(0,T;L^2)\) by
\eqref{eq:acoustic-regularity}. Because \(a_t(\cdot,0)=0\) and
\(\pi(\cdot,0)=0\) by \eqref{eq:acoustic-initial}, and
\(G(\cdot,0)=0\) by \eqref{eq:general-Lq}, the reflections satisfy
\begin{equation}\label{eq:even-reflection}
 (a^e)_t=(a_t)^o,
 \qquad
 (a^e)_{tt}=(a_{tt})^e,
\end{equation}
and
\begin{equation}\label{eq:odd-reflection}
 (\pi^o)_t=(\pi_t)^e,
 \qquad
 (G^o)_t=(G_t)^e,
\end{equation}
without distributions supported at \(t=0\). The even extension
\(R^e\in L^2(\R^3\times(-T,T))\) requires no initial trace
condition. These identities justify \eqref{eq:extended-system} across
\(t=0\); in particular, no derivative of \(\sigma\) is taken.

Since \(\eta\) depends only on time, applying the product rule to
\eqref{eq:extended-system} and using \eqref{eq:eta-fields} yields
\eqref{eq:temporally-localized-system}. The cutoff \(\eta\) is
constant and equal to one near \(t=0\), so
\eqref{eq:acoustic-initial} and \eqref{eq:general-Lq} give
\eqref{eq:temporally-localized-traces}. Differentiating the first
equation in \eqref{eq:temporally-localized-system} in time and taking
the divergence of the second gives \eqref{eq:localized-scalar-a}; here
we use \(\diver\pi_\eta=(a_\eta)_t-G_\eta\).

On \((0,T)\), we have \(\sigma=1\) and \(\pi_\eta=\eta\pi\).
Multiplying the third equation in \eqref{eq:acoustic-system} by
\(\eta\) and differentiating \(p_\eta=\eta p\) proves
\eqref{eq:p-eta}. Similarly, \(R_\eta=\eta R+\eta'\pi\), and the
formula for \(R\) in \eqref{eq:acoustic-system}, together with
\((F_\eta)_t=\eta F_t+\eta'F\), gives \eqref{eq:R-eta}.

For \(0<t<T\), the definitions \eqref{eq:eta-fields} imply
\[
 G_\eta=\eta G+\eta'a,\qquad
 (G_\eta)_t=\eta G_t+\eta'G+\eta''a+\eta'a_t.
\]
The bounds \(|\eta|\le1\), \eqref{eq:weighted-norms}, and
\eqref{eq:terminal-remainder} imply
\eqref{eq:localized-G-bound}. Likewise,
\(F_\eta=\eta F\) and \((F_\eta)_t=\eta F_t+\eta'F\)
give \eqref{eq:localized-F-bound}.
\end{proof}

In the remainder of this section, \(a,\pi,p,F,G,R\) always denote
the original acoustic fields in \eqref{eq:acoustic-vars} and
\eqref{eq:acoustic-system}. The temporally localized fields retain
the subscript \(\eta\), as specified in \eqref{eq:eta-fields} and
\eqref{eq:localized-p-F}. In particular, \(\mathcal F_s,\mathcal G_s\)
are defined using the original sources. Since \(\phi(x,t)\) and
\(\eta(t)\) are even in time, squared full-time \(L^2\)-norms of the
reflected and localized fields equal twice their positive-time
counterparts. This fixed factor is absorbed into the constants below.

\subsection{Bessel regularization and the negative-order estimate}

The temporally localized scalar equation \eqref{eq:localized-scalar-a},
established in Lemma~\ref{lem:temporal-localization}, contains the
divergence forcing \(\diver R_\eta\). Estimating this term directly in \(L^2\) would require
one additional spatial derivative of \(R_\eta\), which is unavailable in the
application to \eqref{eq:v-source}. A spatial Bessel potential of order
\(-1\) removes this derivative loss.

Set
\begin{equation}\label{eq:conjugated-acoustic-variables}
 A=e^{s\phi}a_\eta,\qquad
 P=e^{s\phi}\pi_\eta,\qquad
 \mathcal R=e^{s\phi}R_\eta,\qquad
 H=e^{s\phi}(a_\eta)_t=A_t-s\phi_tA.
\end{equation}
The first equation in \eqref{eq:temporally-localized-system},
conjugated by \(e^{s\phi}\), gives
\begin{equation}\label{eq:H-divP}
 H
 =
 \diver P-s\nabla\psi\cdot P+e^{s\phi}G_\eta.
\end{equation}

A Bessel potential does not preserve compact spatial support, whereas
Lemma~\ref{lem:scalar-Carleman} requires compact support in
\(\Omega_2\). The following lemma inserts fixed spatial cutoffs and
estimates both the associated off-diagonal errors and the coefficient
commutators.

\begin{lemma}
\label{lem:Bessel}
Let \(q\in\Qad\), where \(\Qad\) is defined in \eqref{eq:Qad}.
Let \(A,P,H\) be the weighted fields defined in
\eqref{eq:conjugated-acoustic-variables}, with \(H\) satisfying
\eqref{eq:H-divP}. Assume that, for every relevant time,
\[
 \supp_x A(\cdot,t)\cup\supp_x P(\cdot,t)
 \cup\supp_x G_\eta(\cdot,t)
 \subset\mathcal K,
\]
where \(\mathcal K\Subset\Omega_2\) is fixed.

Choose
\(\widetilde\chi,\chi\in C_c^\infty(\Omega_2)\) such that
\(\widetilde\chi=1\) in a neighborhood of \(\mathcal K\) and
\(\chi=1\) in a neighborhood of \(\supp\widetilde\chi\). In particular,
\[
 \dist\bigl(\supp\widetilde\chi,\supp(1-\chi)\bigr)>0,
\]
and this separation is fixed independently of the Carleman parameter \(s\).  Define
\begin{equation}\label{eq:Bessel-operators}
 J_s^0=\Lambda_s^{-1}=(s^2-\Delta_x)^{-1/2},
 \qquad
 J_s=\chi J_s^0\widetilde\chi,
\end{equation}
where \(\Lambda_s\) is defined in \eqref{eq:weighted-norms}.

For every fixed integer \(\ell\ge1\), there exists \(C_\ell>0\) such
that
\begin{equation}\label{eq:parametrix}
 \norm{\Lambda_sJ_sA-A}_{L^2}
 \le
 C_\ell s^{-\ell}\norm A_{L^2},
 \qquad s\ge1.
\end{equation}
Moreover,
\begin{equation}\label{eq:J-basic}
 \norm{[L_{\phi,s},J_s]A}_{L^2}
 \le C\norm A_{L^2},
\end{equation}
where $L_{\phi,s}$ is defined by \eqref{eq:conjugated-wave-operator}. 
For every \(L^2\)-field \(U\) whose spatial support is contained in
\(\mathcal K\),
\begin{equation}\label{eq:J-order-minus-one}
 \norm{sJ_sU}_{L^2}
 \le C\norm U_{L^2},
\end{equation}
and, if \(U\) is vector-valued,
\begin{equation}\label{eq:J-divergence}
 \norm{J_s\diver U}_{L^2}
 \le C\norm U_{L^2}.
\end{equation}
The coefficient commutators satisfy
\begin{equation}\label{eq:Jq-comm}
 \norm{[J_s,q]U}_{L^2}
 \le CMs^{-2}\norm U_{L^2},
\end{equation}
for scalar- or vector-valued \(U\), and
\begin{equation}\label{eq:Jq-divP}
 \norm{[J_s,q]\diver P}_{L^2}
 \le CMs^{-1}\norm P_{L^2}.
\end{equation}

Finally, the identity \eqref{eq:H-divP} and the fixed separation of
\(\mathcal K\) from \(\supp(1-\chi)\) imply that, for every fixed
integer \(\ell\ge1\),
\begin{equation}\label{eq:J0-vs-J}
 \norm{J_s^0H}_{L^2}
 \le
 C\norm{J_sH}_{L^2}
 +C_\ell s^{-\ell}
 \left(
 \norm P_{L^2}
 +\norm{e^{s\phi}G_\eta}_{L^2}
 \right),
 \qquad s\ge1.
\end{equation}
The constant \(C\) is independent of \(s\ge1\) and of the particular
coefficient \(q\in\Qad\).
The constants \(C\) and \(C_\ell\) may depend on the fixed weight,
\(\mathcal K\), and the cutoff functions
\(\widetilde\chi,\chi\), while \(C_\ell\) may additionally depend on
the decay order \(\ell\).
\end{lemma}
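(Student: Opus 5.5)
The plan is to treat \(J_s^0=\Lambda_s^{-1}\) as a semiclassical-type Fourier multiplier with symbol \((s^2+|\zeta|^2)^{-1/2}\). Its kernel \(K_s(x)\) is smooth away from \(x=0\) and decays like \(e^{-s|x|/2}\) together with all derivatives for \(|x|\ge c>0\). Every estimate will then follow from two facts. The first is Plancherel bounds for multipliers of order \(-1\), \(0\), \(-2\) in the weighted sense \(|\partial_\zeta^\alpha m|\lesssim (s^2+|\zeta|^2)^{(k-|\alpha|)/2}\). The second is off-diagonal exponential smallness between sets at fixed positive distance, which gives the \(C_\ell s^{-\ell}\) terms. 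Thus \eqref{eq:J-order-minus-one} is immediate, since \(\norm{sJ^0_s}\le1\) and the cutoffs are bounded. For \eqref{eq:J-divergence} write \(J_s\diver U=\chi J^0_s\diver(\widetilde\chi U)-\chi J_s^0(\nabla\widetilde\chi\cdot U)\); the first operator \(J^0_s\diver\) has symbol \(i\zeta/\Lambda_s\) of norm at most \(1\), and the second term is controlled by \(s^{-1}\). Actually \(\widetilde\chi U=U\) for \(\supp U\subset\mathcal K\), which is simpler.

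For \eqref{eq:parametrix}, since \(\widetilde\chi A=A\), I would write \(\Lambda_sJ_sA-A=\Lambda_s\chi J^0_sA-\Lambda_s J^0_sA=-\Lambda_s(1-\chi)J_s^0\widetilde\chi A\). The operator \((1-\chi)J_s^0\widetilde\chi\) has the kernel \((1-\chi(x))K_s(x-y)\widetilde\chi(y)\), supported in \(|x-y|\ge c\). It is therefore smoothing with all Sobolev operator norms bounded by \(C_\ell e^{-cs/2}\le C_\ell s^{-\ell}\) (also after applying \(\Lambda_s\), which costs a power of \(s\) and is harmless). The same off-diagonal argument gives \eqref{eq:J0-vs-J}: \(J^0_sH=J_sH+(1-\chi)J^0_s H\), and \(H\) is supported in \(\mathcal K\), so \(H=\widetilde\chi H\). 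Using \eqref{eq:H-divP}, I would move the divergence onto the kernel by integrating by parts, obtaining a kernel bounded by \(C_\ell s^{-\ell}\) acting on \(P\) and \(e^{s\phi}G_\eta\). The term \(s\nabla\psi\cdot P\) also costs one factor of \(s\), which is absorbed.

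The commutator estimates are handled by symbol calculus. For \eqref{eq:Jq-comm}, write \([J_s,q]U=\chi[J^0_s,q]U\) plus cutoff pieces; those pieces vanish or are off-diagonal once \(\widetilde\chi=1\) on \(\supp U\) and \(\chi=1\) near \(\supp\widetilde\chi\). The commutator \([m(\mathsf D),q]\) of a multiplier of order \(-1\) with \(q\in W^{5,\infty}\) has leading term \(\sum_j (\partial_jq)\,\partial_{\zeta_j}m(\mathsf D)/i\). Here \(\partial_\zeta m\) is of order \(-2\) in the \(s\)-weighted sense, with norm at most \(Cs^{-2}\), and the remainder is of order \(-3\) with a bound depending on \(\norm q_{W^{2,\infty}}\). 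To make this rigorous without a pseudodifferential theory, I would use the kernel representation \([J^0_s,q]U(x)=\int K_s(x-y)(q(y)-q(x))U(y)\dd y\). Taylor-expanding \(q\) and noting that \(\int |z||K_s(z)|\dd z\lesssim s^{-2}\) (scaling \(K_s(z)=sK_1(sz)\) in three dimensions with \(K_1\) integrable, including \(|z|K_1\)) gives \(CMs^{-2}\) directly by Young's inequality. Then \eqref{eq:Jq-divP} follows from \([J_s,q]\diver P=\diver([J_s,q]P)-[J_s,\nabla q]\cdot P\) modulo cutoff terms. The operator \(\diver[J^0_s,q]\) has kernel \(\nabla K_s(z)\,(q(y)-q(x))\) with \(\int|z||\nabla K_s|\lesssim s^{-1}\), and the second term is \(O(Ms^{-2})\).

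The main obstacle is \eqref{eq:J-basic}, because \(L_{\phi,s}\) has coefficients growing in \(s\): it equals \(\partial_t^2-\Delta+2s\nabla\psi\cdot\nabla-2s\phi_t\partial_t+s^2(\phi_t^2-|\nabla\psi|^2)+s(\Delta\psi-\phi_{tt})\). The multiplier \(J^0_s\) commutes with \(\partial_t\), \(\partial_t^2\) and \(\Delta\), and \(\phi_t\) is spatially constant, so only the terms with spatial coefficients remain. These are \(2s[J^0_s,\nabla\psi\cdot\nabla]\), \(s^2[J^0_s,|\nabla\psi|^2]\), \(s[J^0_s,\Delta\psi]\), and the cutoff commutators such as \([\chi,\Delta]J_s^0\). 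By the kernel bounds above, \(s\cdot\)(order \(-1\) gain from commuting with a smooth function, applied to \(\nabla\)) gives \(s\cdot s^{-1}=O(1)\), since \([J_s^0,\nabla\psi]\nabla\) has norm at most \(Cs^{-1}\) by the \(\int|z||\nabla K_s|\) bound. Likewise \(s^2\cdot s^{-2}=O(1)\), and \(s\cdot s^{-2}\) is smaller still. The delicate point is the \(\partial_t\)-term, \(s\phi_t\) times \(\partial_t\): it commutes exactly because \(\phi_t=-2\gamma t\) does not depend on \(x\), and I would emphasize that this is where the quadratic-in-time weight is used. The cutoff commutators \([\chi,\Delta]J_s^0\widetilde\chi\) and \([\chi,\nabla\psi\cdot\nabla]\) are supported where \(\nabla\chi\ne0\), at fixed distance from \(\supp\widetilde\chi\), so they are again \(O(s^{-\ell})\) even after multiplication by \(s^2\). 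Collecting all terms gives \(\norm{[L_{\phi,s},J_s]A}\le C\norm A\) uniformly in \(s\ge1\).
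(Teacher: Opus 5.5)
Your proposal is correct and follows essentially the paper's route. Both arguments rest on the kernel representation of $J_s^0$ with the moment bounds $\int|z|K_s(z)\dd z\lesssim s^{-2}$ and $\int|z|\,|\nabla K_s(z)|\dd z\lesssim s^{-1}$, off-diagonal decay for \eqref{eq:parametrix} and \eqref{eq:J0-vs-J}, and Lipschitz-kernel commutator bounds for \eqref{eq:Jq-comm}--\eqref{eq:Jq-divP}. The differences are cosmetic: in \eqref{eq:J-basic} you use kernel moments where the paper uses the explicit symbol derivatives $\nabla_\xi j_s,\Delta_\xi j_s$, and for the cutoff commutator $[L_{\phi,s},\chi]J_s^0A$ you use off-diagonal decay where the paper uses the plain bounds $\norm{\nabla J_s^0}\le1$, $\norm{sJ_s^0}\le1$. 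One small slip: the correct scaling is $K_s(z)=s^2K_1(sz)$, not $sK_1(sz)$, and it is this scaling that gives the moment bounds you state.
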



\begin{proof}
The kernel representation and multiplier properties of the Bessel
potential used below are standard; see
\cite{AdamsFournier2003,AronszajnSmith1961}.
All estimates in the proof are first understood as spatial estimates for a
fixed time. Since their constants are independent of time, the corresponding
space-time estimates follow by integration over the relevant time interval.

\medskip
\noindent\emph{Step 1: Kernel estimates and the localized parametrix.}
Let \(K_s\) denote the convolution kernel of
\(J_s^0=(s^2-\Delta_x)^{-1/2}\). The Laplace representation of its
Fourier multiplier gives
\begin{equation}\label{eq:Bessel-kernel}
\begin{aligned}
 (s^2+|\xi|^2)^{-1/2}
 &=
 \frac1{\Gamma(1/2)}
 \int_0^\infty
 \tau^{-1/2}e^{-\tau(s^2+|\xi|^2)}\dd\tau,\\
 K_s(x)
 &=
 c_3\int_0^\infty
 \tau^{-2}e^{-s^2\tau-|x|^2/(4\tau)}\dd\tau,
\end{aligned}
\end{equation}
where \(c_3>0\) is the heat-kernel normalization constant.
Rescaling in \eqref{eq:Bessel-kernel} gives
\[
 K_s(x)=s^2K_1(sx),
 \qquad
 K_s(x)\le C|x|^{-2},
 \qquad x\ne0.
\]
Gaussian integration also gives
\begin{equation}\label{eq:Bessel-kernel-moments}
 \int_{\R^3}K_s(x)\dd x=s^{-1},\qquad
 \int_{\R^3}|x|K_s(x)\dd x\le Cs^{-2},
 \qquad
 \int_{\R^3}|x|\,|\nabla K_s(x)|\dd x\le Cs^{-1}.
\end{equation}
For the last estimate, differentiating the Gaussian in
\eqref{eq:Bessel-kernel} and integrating its second spatial moment reduces
the remaining integral to
\[
 C\int_0^\infty\tau^{-1/2}e^{-s^2\tau}\dd\tau.
\]

The inequality
\[
 s^2\tau+\frac{|x|^2}{4\tau}\ge s|x|
\]
implies rapid decay of \(K_s\) away from the origin. Indeed,
\(K_s(x)=s^2K_1(sx)\), and differentiation under the integral in
\eqref{eq:Bessel-kernel} shows that for every multi-index \(\alpha\)
and every integer \(N\ge1\),
\[
 |\partial^\alpha K_1(z)|
 \le C_{\alpha,N}(1+|z|)^{-N},
 \qquad |z|\ge1.
\]
After the change of variables \(z=sx\), this bound gives the following
uniform off-diagonal estimate: for fixed multi-indices
\(\alpha,\mu\), every \(d>0\), and every integer \(\ell\ge1\),
\begin{equation}\label{eq:Bessel-offdiagonal}
 \int_{|x|\ge d}
 |x|^{|\mu|}
 |\partial_x^\alpha K_s(x)|\dd x
 \le
 C_{\ell,\alpha,\mu,d}s^{-\ell},
 \qquad s\ge1.
\end{equation}
Only the cases \(|\alpha|\le1\) are used below.

Since \(\supp A\subset\mathcal K\) and
\(\chi=1\) in a neighborhood of \(\supp\widetilde\chi\supset\mathcal K\),
the sets \(\supp A\) and \(\supp(1-\chi)\) are separated by a fixed
positive distance. Set
\[
 g=(1-\chi)J_s^0A.
\]
Because \(\widetilde\chi A=A\) and
\(\Lambda_sJ_s^0A=A\),
\[
 \Lambda_sJ_sA-A
 =
 -\Lambda_sg.
\]
Using \eqref{eq:Bessel-offdiagonal} with a sufficiently high decay order
to absorb the factor \(s\), we obtain, for every fixed \(\ell\ge1\),
\begin{equation}\label{eq:parametrix-local-bound}
 s\norm g_{L^2}
 +\norm{\nabla g}_{L^2}
 \le
 C_\ell s^{-\ell}\norm A_{L^2}.
\end{equation}
Since
\[
 \norm{\Lambda_sg}_{L^2}^2
 =
 s^2\norm g_{L^2}^2+\norm{\nabla g}_{L^2}^2,
\]
\eqref{eq:parametrix-local-bound} gives
\[
 \norm{\Lambda_sJ_sA-A}_{L^2}
 \le
 C_\ell s^{-\ell}\norm A_{L^2},
\]
which proves \eqref{eq:parametrix}.

\medskip
\noindent\emph{Step 2: Multiplier and coefficient commutators.}
The estimates \eqref{eq:J-order-minus-one} and
\eqref{eq:J-divergence} follow directly from the Fourier multipliers
\[
 s(s^2+|\xi|^2)^{-1/2},
 \qquad
 i\xi(s^2+|\xi|^2)^{-1/2},
\]
whose \(L^\infty_\xi\)-norms are bounded uniformly for \(s\ge1\).
Indeed, if \(U\) is supported in \(\mathcal K\), then
\(\widetilde\chi U=U\). If \(U\) is vector-valued, the distribution
\(\diver U\) is also supported in \(\mathcal K\), and hence
\[
 \widetilde\chi\diver U=\diver U.
\]
Multiplication by the fixed cutoff \(\chi\) therefore preserves the
corresponding \(L^2\)-bounds.

We next prove \eqref{eq:J-basic}. Since \(J_s\) acts only in the spatial
variables, it commutes with \(\partial_t\) and with multiplication by
functions of \(t\) alone. Set
\[
 j_s(\xi)=(s^2+|\xi|^2)^{-1/2}.
\]
The conjugated wave operator introduced in
\eqref{eq:conjugated-wave-operator} is
\[
\begin{aligned}
 L_{\phi,s}
 ={}&
 \partial_t^2-\Delta
 -2s\phi_t\partial_t
 +2s\nabla\psi\cdot\nabla+s^2(\phi_t^2-|\nabla\psi|^2)
 -s(\phi_{tt}-\Delta\psi).
\end{aligned}
\]
Write \(y=x-x_0\), so that \(\nabla\psi=2y\) and
\(|\nabla\psi|^2=4|y|^2\). The time-dependent coefficients in
\(L_{\phi,s}\) commute with \(J_s^0\), as do \(-\Delta_x\) and the
constant \(\Delta\psi\). Consequently,
\begin{equation}\label{eq:Bessel-unlocalized-commutator}
 [L_{\phi,s},J_s^0]A
 =4s\sum_{j=1}^3[y_j,J_s^0]\partial_{x_j}A
 -4s^2[|y|^2,J_s^0]A.
\end{equation}
The Fourier transform in the spatial variables gives
\begin{equation}\label{eq:Bessel-coordinate-commutators}
\begin{aligned}
 \widehat{[y_j,J_s^0]A}
 &=i(\partial_{\xi_j}j_s)\widehat A,\\
 \widehat{[|y|^2,J_s^0]A}
 &=-(\Delta_\xi j_s)\widehat A
   +2i\nabla_\xi j_s\cdot\widehat{yA}.
\end{aligned}
\end{equation}
Here \(yA\in L^2\) because \(\supp A\subset\mathcal K\), and
\(\|yA\|_{L^2}\le C_{\mathcal K,x_0}\|A\|_{L^2}\).
The multiplier derivatives are
\begin{equation}\label{eq:Bessel-symbol-derivatives}
 \nabla_\xi j_s
 =-\xi(s^2+|\xi|^2)^{-3/2},
 \qquad
 \Delta_\xi j_s
 =-3s^2(s^2+|\xi|^2)^{-5/2}.
\end{equation}
Rescaling \(\xi=s\zeta\) shows that
\begin{equation}\label{eq:Bessel-symbol-bound}
 \sup_{\xi\in\R^3}
 \left[
 (s|\xi|+s^2)|\nabla_\xi j_s|
 +s^2|\Delta_\xi j_s|
 \right]
 \le C,
 \qquad s\ge1.
\end{equation}
Applying Plancherel's theorem to
\eqref{eq:Bessel-unlocalized-commutator} and using
\eqref{eq:Bessel-coordinate-commutators}--
\eqref{eq:Bessel-symbol-bound}, we obtain
\[
\begin{aligned}
 \norm{[L_{\phi,s},J_s^0]A}_{L^2}
 &\le C\left(
 s\sup_\xi |\xi|\,|\nabla_\xi j_s|
 +s^2\sup_\xi |\Delta_\xi j_s|
 \right)\norm A_{L^2}\\
 &\quad +Cs^2\sup_\xi|\nabla_\xi j_s|\,
 \norm{yA}_{L^2}
 \le C_{\mathcal K,x_0}\norm A_{L^2},
\end{aligned}
\]
with a constant independent of \(s\ge1\).

It remains to account for the spatial cutoffs in \(J_s\).
Since \(\widetilde\chi=1\) near \(\supp A\), the term
\(\chi J_s^0[L_{\phi,s},\widetilde\chi]A\) vanishes.
The commutator identity therefore reads
\[
 [L_{\phi,s},J_s]A
 =\chi[L_{\phi,s},J_s^0]A
  +[L_{\phi,s},\chi]J_s^0A,
\]
where the cutoff commutator is the first-order operator
\[
 [L_{\phi,s},\chi]
 =-2\nabla\chi\cdot\nabla-\Delta\chi
   +2s\nabla\psi\cdot\nabla\chi.
\]
The cutoff functions and \(\nabla\psi\) are bounded on their fixed
supports, while
\(\|\nabla J_s^0U\|_{L^2}\le\|U\|_{L^2}\) and
\(\|sJ_s^0U\|_{L^2}\le\|U\|_{L^2}\) by Plancherel's theorem.
Together with the unlocalized bound above, these estimates prove
\eqref{eq:J-basic}.

We now prove the coefficient commutator estimates. Since
\(J_s^0\) is convolution with \(K_s\),
\[
 [J_s^0,q]U(x)
 =
 \int_{\R^3}
 K_s(x-y)\bigl(q(y)-q(x)\bigr)U(y)\dd y.
\]
The Lipschitz estimate
\[
 |q(y)-q(x)|
 \le
 \norm{\nabla q}_{L^\infty}|x-y|,
\]
together with the second estimate in
\eqref{eq:Bessel-kernel-moments} and the
\(L^1*L^2\to L^2\) convolution inequality
\cite[Theorem~1.2.10]{Grafakos2014}, gives
\[
 \norm{[J_s^0,q]U}_{L^2}
 \le
 Cs^{-2}\norm{\nabla q}_{L^\infty}\norm U_{L^2}.
\]
For \(U\) supported in \(\mathcal K\),
\(\widetilde\chi U=U\), and multiplication by \(q\) commutes with the
cutoff functions. Hence
\[
 [J_s,q]U
 =
 \chi[J_s^0,q]U.
\]
Using
\(\norm{\nabla q}_{L^\infty}\le M\) proves
\eqref{eq:Jq-comm}.

For the divergence commutator in \eqref{eq:Jq-divP}, first take
\(P\) smooth and compactly supported. Integrating by parts in \(y\) gives
\begin{equation}\label{eq:Jq-divergence-formula}
\begin{aligned}
 [J_s^0,q]\diver P(x)
 ={}&
 -\int_{\R^3}
 \nabla_yK_s(x-y)
 \bigl(q(y)-q(x)\bigr)\cdot P(y)\dd y\\
 &-\int_{\R^3}
 K_s(x-y)\nabla q(y)\cdot P(y)\dd y.
\end{aligned}
\end{equation}
The integration may first be carried out on
\(\{|x-y|>\varepsilon\}\). The boundary term on
\(|x-y|=\varepsilon\) tends to zero as \(\varepsilon\to0\), because
\[
 K_s(x-y)\le C|x-y|^{-2}
\]
and
\[
 |q(y)-q(x)|
 \le C|x-y|.
\]
The first and third estimates in
\eqref{eq:Bessel-kernel-moments} and Young's inequality then give
\[
 \norm{[J_s^0,q]\diver P}_{L^2}
 \le
 Cs^{-1}\norm{\nabla q}_{L^\infty}\norm P_{L^2}.
\]
For \(P\in L^2(\R^3;\R^3)\), the commutator
\([J_s^0,q]\diver P\) is initially understood as a distribution:
\(\diver P\in H^{-1}\), and multiplication by the Lipschitz function
\(q\) is bounded on \(H^{-1}\). The integral formula above defines
an \(L^2\) representative with the stated bound. Approximating
compactly supported \(P\) by smooth vector fields and passing to the
limit in distributions extends both the identity and the estimate.

Since \(\widetilde\chi=1\) near \(\supp P\),
\[
 \widetilde\chi\diver P=\diver P
\]
in the sense of distributions. Therefore
\[
 [J_s,q]\diver P
 =
 \chi[J_s^0,q]\diver P,
\]
and the bound
\(\norm{\nabla q}_{L^\infty}\le M\) proves
\eqref{eq:Jq-divP}.

\medskip
\noindent\emph{Step 3: Comparison of \(J_s^0H\) and \(J_sH\).}
By \eqref{eq:H-divP} and the support assumptions on \(P\) and \(G_\eta\),
the field \(H\) is supported in \(\mathcal K\). Hence
\(\widetilde\chi H=H\), and
\[
 J_s^0H-J_sH
 =
 (1-\chi)J_s^0H.
\]
Applying \((1-\chi)J_s^0\) to \eqref{eq:H-divP} gives
\begin{equation}\label{eq:J0-J-decomposition}
\begin{aligned}
 J_s^0H-J_sH
 ={}&
 (1-\chi)J_s^0\diver P-s(1-\chi)J_s^0(\nabla\psi\cdot P)
 +(1-\chi)J_s^0(e^{s\phi}G_\eta).
\end{aligned}
\end{equation}
The supports of \(P\) and \(G_\eta\) are contained in
\(\mathcal K\), which is separated by a fixed positive distance from
\(\supp(1-\chi)\). For the first term in
\eqref{eq:J0-J-decomposition}, integration by parts transfers the
divergence from \(P\) to the kernel \(K_s\), so
\eqref{eq:Bessel-offdiagonal} with \(|\alpha|=1\) applies. For the
remaining terms, \eqref{eq:Bessel-offdiagonal} with
\(|\alpha|=0\) applies; in the second term, the decay order is increased
by one to absorb the explicit factor \(s\). Since \(\nabla\psi\) is
bounded on the fixed set \(\mathcal K\), for every fixed
\(\ell\ge1\),
\begin{equation}\label{eq:J0-J-offdiagonal}
 \norm{J_s^0H-J_sH}_{L^2}
 \le
 C_\ell s^{-\ell}
 \left(
 \norm P_{L^2}
 +\norm{e^{s\phi}G_\eta}_{L^2}
 \right).
\end{equation}
Finally, the triangle inequality gives
\[
 \norm{J_s^0H}_{L^2}
 \le
 \norm{J_sH}_{L^2}
 +\norm{J_s^0H-J_sH}_{L^2},
\]
and \eqref{eq:J0-J-offdiagonal} proves
\eqref{eq:J0-vs-J}.
\end{proof}


Lemma~\ref{lem:Bessel} removes the spatial derivative from the divergence
term \(\diver R_\eta\) in the localized scalar equation
\eqref{eq:localized-scalar-a}. We now combine this regularization with the
scalar Carleman estimate \eqref{eq:scalar-Carleman}. The resulting estimate
controls the conjugated wave variable \(A\) and the negative-order quantity
\(J_s^0H\) in terms of \(\mathcal R\), \(P\), and the source quantities
\(\mathcal G_s\) and \(\mathcal L_s\). The term involving \(\mathcal R\) is subsequently estimated using
Lemma~\ref{lem:Volterra}, which yields \eqref{eq:p-est}--\eqref{eq:R-est}.
The remaining term involving the full vector field \(P\) is treated by the
frequency decomposition in Lemma~\ref{lem:Helmholtz} and the component
estimates in Lemmas~\ref{lem:low}--\ref{lem:transverse}, leading to the
closed acoustic estimate in Proposition~\ref{prop:acoustic}.

\begin{proposition}\label{prop:negative-wave}
Assume \eqref{eq:acoustic-support},
\eqref{eq:general-Lq}, and
\eqref{eq:acoustic-regularity}, and let the acoustic fields be extended and
temporally localized as in Lemma~\ref{lem:temporal-localization}.
Let
\(A,P,\mathcal R,H\) be the conjugated fields defined in
\eqref{eq:conjugated-acoustic-variables}, and let
\(J_s^0,J_s\) be the Bessel operators defined in
\eqref{eq:Bessel-operators}.

There exist \(s_{\rm B}\ge1\) and \(C>0\), depending only on the fixed
a priori parameters, the compact set \(\mathcal K\), and the cutoff functions
chosen in  \eqref{eq:temporal-cutoff} and \eqref{eq:Bessel-operators}, such
that
\begin{equation}\label{eq:negative-wave}
 s\norm A_{L^2}^2
 +s\norm{J_s^0H}_{L^2}^2
 \le
 C\norm{\mathcal R}_{L^2}^2
 +Cs^{-2}\norm P_{L^2}^2
 +Cs^{-2}\mathcal G_s
 +C\mathcal L_s
\end{equation}
for every \(s\ge s_{\rm B}\). Here
\(\mathcal G_s\) and \(\mathcal L_s\) are defined in
\eqref{eq:weighted-norms} and \eqref{eq:terminal-remainder},
respectively.
\end{proposition}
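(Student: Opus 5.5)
The plan is to apply the scalar Carleman estimate of Lemma~\ref{lem:scalar-Carleman} to \(Z=J_sA\), which is compactly supported in \(\Omega_2\times(-T,T)\) because \(\chi\in C_c^\infty(\Omega_2)\) and \(\eta\) vanishes near \(t=\pm T\). A density argument, using the regularity \eqref{eq:acoustic-field-regularity}, allows the estimate to be applied to this \(Z\) rather than only to smooth compactly supported fields.

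\emph{Step 1: the conjugated equation.} Conjugating \eqref{eq:localized-scalar-a} by \(e^{s\phi}\) gives
\[
 L_{\phi,s}A=e^{s\phi}\diver R_\eta-\beta q\sigma H-\beta\sigma\nabla q\cdot P+e^{s\phi}\bigl((G_\eta)_t+\beta q\sigma G_\eta\bigr).
\]
We then write \(e^{s\phi}\diver R_\eta=\diver\mathcal R-s\nabla\psi\cdot\mathcal R\). Applying \(J_s\) and using \eqref{eq:J-basic}, we obtain
\[
 L_{\phi,s}(J_sA)=J_sL_{\phi,s}A+O(\norm A_{L^2}).
\]
The right-hand side is estimated term by term:
\begin{itemize}
\item \(\norm{J_s\diver\mathcal R}_{L^2}+\norm{sJ_s(\nabla\psi\cdot\mathcal R)}_{L^2}\le C\norm{\mathcal R}_{L^2}\), by \eqref{eq:J-divergence} and \eqref{eq:J-order-minus-one};
\item \(\norm{J_s(\nabla q\cdot P)}_{L^2}\le Cs^{-1}\norm P_{L^2}\), by \eqref{eq:J-order-minus-one};
\item the \(G_\eta\) terms contribute \(Cs^{-2}\mathcal G_s+Cs^{-2}\mathcal L_s\), by \eqref{eq:J-order-minus-one} and \eqref{eq:localized-G-bound}. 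Since \(e^{s\phi}(G_\eta)_t\) is not differentiated by \(J_s\), the factor \(s^{-1}\) is gained directly.
\end{itemize}
The term \(\beta J_s(q\sigma H)\) is written as \(\beta q\sigma J_sH+\beta\sigma[J_s,q]H\). The commutator is handled through \eqref{eq:H-divP}: we use \eqref{eq:Jq-divP} for \(\diver P\) and \eqref{eq:Jq-comm} for the remaining pieces. This bounds it by \(Cs^{-1}\norm P_{L^2}+Cs^{-2}\norm{e^{s\phi}G_\eta}_{L^2}\). The main part is \(C\norm{J_sH}_{L^2}\).

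\emph{Step 2: lower bound and absorption.} The left side of \eqref{eq:scalar-Carleman} dominates \(s^3\norm{J_sA}^2\) and \(s\norm{\partial_t(J_sA)-s\phi_tJ_sA}^2\). The latter equals \(s\norm{J_sH}^2\), since \(J_s\) commutes with \(\partial_t\) and with \(\phi_t=-2\gamma t\). Next, \eqref{eq:parametrix} gives \(\norm A\le\norm{\Lambda_sJ_sA}+Cs^{-\ell}\norm A\). Moreover, \(\norm{\Lambda_sJ_sA}^2=s^2\norm{J_sA}^2+\norm{\nabla J_sA}^2\), and the gradient part is controlled by the term \(s\norm{\nabla Z-s\nabla\psi Z}^2\) together with \(s^3\norm Z^2\). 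Hence \(s\norm A^2\) is bounded by the Carleman left side, up to negligible errors.

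The right side contains \(C\norm{J_sH}^2+C\norm A^2\). These are absorbed into \(s\norm{J_sH}^2+s\norm A^2\) once \(s\ge s_{\rm B}\) is large. This absorption is the crucial point: the damping term \(\beta q\sigma(a_\eta)_t\) is of order zero relative to the Carleman gain of order \(s\).

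\emph{Step 3: from \(J_s\) to \(J_s^0\).} Finally, \eqref{eq:J0-vs-J} converts \(s\norm{J_sH}^2\) into \(s\norm{J_s^0H}^2\), at the cost of \(Cs^{1-2\ell}\bigl(\norm P^2+\norm{e^{s\phi}G_\eta}^2\bigr)\). With \(\ell=2\), this is absorbed into \(Cs^{-2}\norm P^2+Cs^{-2}(\mathcal G_s+\mathcal L_s)\).

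\emph{Expected obstacle and bookkeeping.} The main obstacle is the powers of \(s\) on \(P\). The \(P\)-terms enter the square of the right side as \(Cs^{-2}\norm P^2\), which is exactly the claimed weight. We must check that no term of the form \(s^0\norm P\) survives. Such a term could arise from \(J_s(\sigma H)\) if the commutator with \(\sigma\) were mishandled. However, \(\sigma\) depends only on \(t\) and commutes with \(J_s\), so only \([J_s,q]\) appears, and it is controlled by \eqref{eq:Jq-divP}.
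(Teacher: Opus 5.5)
Your proposal is correct and follows the paper's proof essentially step for step. You apply the scalar Carleman estimate to $Z=J_sA$ using $\partial_t(J_sA)-s\phi_tJ_sA=J_sH$, bound $J_sL_{\phi,s}A$ term by term (splitting the damping term as $q\sigma J_sH+[J_s,q]\sigma H$ and controlling the commutator through \eqref{eq:H-divP}), absorb $C\|A\|^2+C\|J_sH\|^2$ via the parametrix bound with $\ell=2$, and pass from $J_s$ to $J_s^0$ by \eqref{eq:J0-vs-J} — all with the same bookkeeping of the powers of $s$ on $P$ and on the $G_\eta$ terms as the paper.
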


\begin{proof}
We apply the scalar Carleman estimate
\eqref{eq:scalar-Carleman} to
\[
 Z=J_sA,
\]
where \(A=e^{s\phi}a_\eta\) is defined in
\eqref{eq:conjugated-acoustic-variables}.

We first justify this application at the regularity level
\eqref{eq:acoustic-regularity}. The condition \(a_t(0)=0\) in
\eqref{eq:acoustic-initial}, together with the even-reflection identity
\eqref{eq:even-reflection} and the regularity
\eqref{eq:acoustic-field-regularity}, implies that the even extension of
\(a\) has two weak time derivatives in the required \(L^2\)-spaces. After
the temporal localization \eqref{eq:eta-fields} and zero extension outside
\((-T,T)\), multiplication by \(e^{s\phi}\) preserves this regularity for
each fixed \(s\). Since \(J_s\) acts only in the spatial variables and is
of order \(-1\),
\[
 J_sA\in H^2(\R^3\times\R).
\]
Moreover, the outer spatial cutoff \(\chi\) in
\eqref{eq:Bessel-operators} and the temporal cutoff \(\eta\) in
\eqref{eq:temporal-cutoff} give
\[
 \supp(J_sA)\Subset\Omega_2\times(-T,T).
\]
Hence \(J_sA\) can be approximated in \(H^2\) by functions in
\(C_c^\infty(\Omega_2\times(-T,T))\). The estimate
\eqref{eq:scalar-Carleman} therefore extends to \(J_sA\) by density. The
approximation is performed for each fixed \(s\), while the constant in
\eqref{eq:scalar-Carleman} remains independent of \(s\).

On the fixed spatial support of \(J_sA\), the boundedness of
\(\nabla\psi\) gives
\begin{equation}\label{eq:Lambda-JA-control}
 s\norm{\Lambda_sJ_sA}_{L^2}^2
 \le
 C\left(
 s\norm{\nabla J_sA-s\nabla\psi\,J_sA}_{L^2}^2
 +s^3\norm{J_sA}_{L^2}^2
 \right).
\end{equation}
Since \(J_s\) is independent of \(t\) and
\(\phi_t=-2\gamma t\) depends only on \(t\),
\[
 \partial_t(J_sA)-s\phi_tJ_sA
 =
 J_s(A_t-s\phi_tA)
 =
 J_sH,
\]
where \(H\) is defined in
\eqref{eq:conjugated-acoustic-variables}. Thus
\eqref{eq:scalar-Carleman} and \eqref{eq:Lambda-JA-control} imply
\begin{equation}\label{eq:regularized-Carleman-left}
 s\norm{\Lambda_sJ_sA}_{L^2}^2
 +s\norm{J_sH}_{L^2}^2
 \le
 C\norm{L_{\phi,s}(J_sA)}_{L^2}^2.
\end{equation}

Conjugating the localized scalar equation
\eqref{eq:localized-scalar-a} by \(e^{s\phi}\) gives
\begin{equation}\label{eq:conjugated-scalar-equation}
 L_{\phi,s}A
 =
 \diver\mathcal R
 -s\nabla\psi\cdot\mathcal R
 -\beta\sigma\nabla q\cdot P
 +e^{s\phi}(G_\eta)_t
 +\beta q\sigma e^{s\phi}G_\eta
 -\beta q\sigma H,
\end{equation}
where \(A,P,\mathcal R,H\) are defined from the localized fields
in \eqref{eq:conjugated-acoustic-variables}. Therefore,
\[
 L_{\phi,s}(J_sA)
 =
 J_s(L_{\phi,s}A)
 +[L_{\phi,s},J_s]A.
\]

The estimates \eqref{eq:J-order-minus-one} and
\eqref{eq:J-divergence}, together with the bounds on \(q\),
\(\nabla q\), and \(\nabla\psi\), yield
\begin{equation}\label{eq:regularized-forcing-bounds}
\begin{aligned}
 \norm{
 J_s(\diver\mathcal R-s\nabla\psi\cdot\mathcal R)
 }_{L^2}
 &\le C\norm{\mathcal R}_{L^2},\\
 \norm{J_s(\sigma\nabla q\cdot P)}_{L^2}
 &\le Cs^{-1}\norm P_{L^2},\\
 \norm{
 J_s(e^{s\phi}(G_\eta)_t+\beta q\sigma e^{s\phi}G_\eta)
 }_{L^2}
 &\le
 Cs^{-1}
 \bigl(
 \norm{e^{s\phi}(G_\eta)_t}_{L^2}
 +\norm{e^{s\phi}G_\eta}_{L^2}\bigr).
\end{aligned}
\end{equation}
Here the coefficient bounds follow from \eqref{eq:Qad}, while
\(\nabla\psi\) is bounded on the fixed spatial support. The bound \eqref{eq:localized-G-bound} controls these two
\(G_\eta\)-norms by \(\mathcal G_s\) from the original source
\(G\) and the terminal remainder \(\mathcal L_s\).

It remains to estimate the damping term \(-\beta q\sigma H\) in
\eqref{eq:conjugated-scalar-equation}. Since \(J_s\) acts only in the
spatial variables, it commutes with the time-dependent factor \(\sigma\),
and hence
\[
 J_s(q\sigma H)
 =
 q\sigma J_sH+[J_s,q]\sigma H.
\]
Using \eqref{eq:H-divP},
\[
 H=\diver P-s\nabla\psi\cdot P+e^{s\phi}G_\eta,
\]
together with \eqref{eq:Jq-comm} and
\eqref{eq:Jq-divP}, gives
\begin{equation}\label{eq:regularized-damping-bound}
 \norm{[J_s,q]H}_{L^2}
 \le
 CMs^{-1}\norm P_{L^2}
 +CMs^{-2}\norm{e^{s\phi}G_\eta}_{L^2}.
\end{equation}
Indeed, \eqref{eq:Jq-divP} controls the contribution of
\(\diver P\), whereas \eqref{eq:Jq-comm} controls the contributions of
\(s\nabla\psi\cdot P\) and \(e^{s\phi}G_\eta\).

Substituting \eqref{eq:conjugated-scalar-equation} into
\eqref{eq:regularized-Carleman-left} and using
\eqref{eq:J-basic},
\eqref{eq:regularized-forcing-bounds},
\eqref{eq:regularized-damping-bound}, and
\eqref{eq:localized-G-bound}, we obtain
\begin{equation}\label{eq:regularized-wave-preliminary}
\begin{aligned}
 &s\norm{\Lambda_sJ_sA}_{L^2}^2
 +s\norm{J_sH}_{L^2}^2\\
 &\qquad\le
 C\norm{\mathcal R}_{L^2}^2
 +Cs^{-2}\norm P_{L^2}^2
 +Cs^{-2}\mathcal G_s
 +C\norm A_{L^2}^2
 +C\norm{J_sH}_{L^2}^2
 +C\mathcal L_s.
\end{aligned}
\end{equation}

We now increase the lower threshold for \(s\), if necessary. Fix the
decay order \(\ell=2\) in the parametrix estimate
\eqref{eq:parametrix}. Then
\[
 \norm A_{L^2}
 \le
 \norm{\Lambda_sJ_sA}_{L^2}
 +C_2s^{-2}\norm A_{L^2}.
\]
For \(s\) sufficiently large,
\(C_2s^{-2}\le1/2\), and therefore
\begin{equation}\label{eq:A-by-parametrix}
 \norm A_{L^2}
 \le
 2\norm{\Lambda_sJ_sA}_{L^2}.
\end{equation}
Using \eqref{eq:A-by-parametrix}, the term
\(C\norm A_{L^2}^2\) on the right-hand side of
\eqref{eq:regularized-wave-preliminary} is absorbed by
\(s\norm{\Lambda_sJ_sA}_{L^2}^2\) after increasing \(s\) once more.
Similarly, \(C\norm{J_sH}_{L^2}^2\) is absorbed directly by
\(s\norm{J_sH}_{L^2}^2\). Consequently,
\begin{equation}\label{eq:regularized-wave-absorbed}
 s\norm A_{L^2}^2
 +s\norm{J_sH}_{L^2}^2
 \le
 C\norm{\mathcal R}_{L^2}^2
 +Cs^{-2}\norm P_{L^2}^2
 +Cs^{-2}\mathcal G_s
 +C\mathcal L_s.
\end{equation}

We next use \eqref{eq:J0-vs-J}, again with \(\ell=2\). Squaring that
estimate and multiplying by \(s\) gives
\[
 s\norm{J_s^0H}_{L^2}^2
 \le
 Cs\norm{J_sH}_{L^2}^2
 +Cs^{-3}
 \left(
 \norm P_{L^2}^2
 +\norm{e^{s\phi}G_\eta}_{L^2}^2
 \right).
\]
The localized source estimate \eqref{eq:localized-G-bound} and
\(s\ge1\) yield
\[
\begin{aligned}
 &Cs^{-3}\norm P_{L^2}^2
 +Cs^{-3}\norm{e^{s\phi}G_\eta}_{L^2}^2\\
 &\qquad\le
 Cs^{-2}\norm P_{L^2}^2
 +Cs^{-2}\mathcal G_s+C\mathcal L_s.
\end{aligned}
\]
Combining this inequality with
\eqref{eq:regularized-wave-absorbed} yields \eqref{eq:negative-wave}.
\end{proof}


\subsection{Weighted Volterra estimate}
\label{sec:volterra}

The negative-order estimate \eqref{eq:negative-wave} in
Proposition~\ref{prop:negative-wave} still contains the weighted source
\(\mathcal R=e^{s\phi}R_\eta\). By \eqref{eq:R-eta},
\(R_\eta\) contains the localized field \(p_\eta\), which satisfies
\eqref{eq:p-eta}. Since
\[
 e^{s\phi(x,t)}=e^{s\psi(x)}e^{-s\gamma t^2},
\]
the Gaussian temporal factor in the Carleman weight can be used to estimate
\(p_\eta\) with a factor \(s^{-1}\) in the squared norm. The weighted Volterra estimate \eqref{eq:Volterra} will be used again in
the next subsection, specifically in Lemmas~\ref{lem:low} and
\ref{lem:transverse}, to control the low-frequency and high-frequency
transverse components of \(P=e^{s\phi}\pi_\eta\).

\begin{lemma}
\label{lem:Volterra}
Let \(m\ge1\), and let \(Q_T\) be the space-time cylinder defined in
\eqref{eq:space-time-cylinder}. Suppose that
\[
 B\in L^\infty(Q_T;\R^{m\times m}),
 \qquad
 \norm B_{L^\infty(Q_T)}\le C_B,
\]
where the matrix norm is induced by the Euclidean norm. Let
\[
 U\in H^1(0,T;L^2(\R^3;\R^m)),
 \qquad
 R_U\in L^2(Q_T;\R^m),
\]
and assume that
\[
 U_t+BU=R_U
 \quad\text{in }Q_T,
 \qquad
 U(\cdot,0)=0.
\]
Then there exists \(C>0\), depending only on \(\gamma\) and \(C_B\), such
that
\begin{equation}\label{eq:Volterra}
 \int_0^T
 e^{-2s\gamma t^2}
 \norm{U(t)}_{L^2(\R^3)}^2\dd t
 \le
 \frac Cs
 \int_0^T
 e^{-2s\gamma t^2}
 \norm{R_U(t)}_{L^2(\R^3)}^2\dd t,
 \qquad s\ge1.
\end{equation}
Equivalently, if
\begin{equation}\label{eq:Volterra-conjugated-fields}
 \mathbf U=e^{-s\gamma t^2}U,
 \qquad
 \mathbf R_U=e^{-s\gamma t^2}R_U,
\end{equation}
then
\[
 \mathbf U_t+(2s\gamma t+B)\mathbf U=\mathbf R_U,
 \qquad
 \mathbf U(\cdot,0)=0,
\]
and
\[
 \norm{\mathbf U}_{L^2(Q_T)}^2
 \le
 Cs^{-1}\norm{\mathbf R_U}_{L^2(Q_T)}^2,
 \qquad s\ge1.
\]
\end{lemma}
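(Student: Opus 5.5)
The plan is a pointwise-in-time energy argument for the conjugated field \(\mathbf U\), exploiting the large linear damping \(2s\gamma t\). First I would verify the equivalent formulation. Since \(U\in H^1(0,T;L^2)\) and \(e^{-s\gamma t^2}\) is smooth, the product rule gives
\[
\mathbf U_t=e^{-s\gamma t^2}(U_t-2s\gamma tU)=\mathbf R_U-(2s\gamma t+B)\mathbf U,
\]
with \(\mathbf U(\cdot,0)=0\). Moreover, \(\norm{\mathbf U}_{L^2(Q_T)}^2\) equals the left side of \eqref{eq:Volterra}, and similarly for \(\mathbf R_U\), so it suffices to prove the conjugated bound.

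Next, I would take the \(L^2(\R^3)\) inner product of the conjugated equation with \(\mathbf U\). This is legitimate because \(t\mapsto\norm{\mathbf U(t)}^2\) is absolutely continuous for \(H^1(0,T;L^2)\) fields. With \(y(t)=\norm{\mathbf U(t)}_{L^2}^2\) and \(\rho(t)=\norm{\mathbf R_U(t)}_{L^2}^2\), the bound on \(B\) and Young's inequality give
\[
\tfrac12y'+2s\gamma t\,y\le C_By+\sqrt{y\rho}.
\]
The main obstacle is that the damping coefficient \(2s\gamma t\) vanishes at \(t=0\). It is therefore not uniformly large, and the naive absorption \(\sqrt{y\rho}\le \varepsilon s y+\rho/(\varepsilon s)\) fails near \(t=0\).

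I would handle this by splitting \((0,T)\) at \(t_s=s^{-1/2}\), with \(s\) large so that \(t_s<T\); for bounded \(s\) the claim follows from a plain Gronwall bound with constant depending on \(T\), \(C_B\) and \(s_0\).

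\emph{Near zero.} On \((0,2t_s)\) I would drop the damping. Gronwall applied to \(\tfrac{d}{dt}\sqrt y\le C_B\sqrt y+\sqrt\rho\), using \(\mathbf U(0)=0\), gives
\[
\sqrt{y(t)}\le e^{C_BT}\int_0^t\sqrt\rho\le e^{C_BT}\sqrt t\,\norm{\mathbf R_U}_{L^2(Q_T)}.
\]
Integrating \(y\) over \((0,2t_s)\) then yields the bound \(Ct_s^2\norm{\mathbf R_U}^2=Cs^{-1}\norm{\mathbf R_U}^2\). In particular \(y(t_s)\le Cs^{-1/2}\norm{\mathbf R_U}^2\).

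\emph{Away from zero.} On \((t_s,T)\) the damping satisfies \(2s\gamma t\ge2\gamma s^{1/2}\), which dominates \(C_B\) for large \(s\). Absorbing with \(\sqrt{y\rho}\le \tfrac{\gamma}{2} s t\,y+\rho/(2\gamma s t)\), I get
\[
y'+\gamma s t\,y\le \frac{C\rho}{st}.
\]
Multiplying by \(t\) instead and integrating is cleaner: \((ty)'=y+ty'\), and with \(z=ty\),
\[
z'+\gamma s t\,z\le y+C\rho/s.
\]
Integrating over \((t_s,T)\) and using \(\gamma s t\,z=\gamma st^2y\ge\gamma y\) for \(t\ge t_s\) gives
\[
\gamma\int_{t_s}^T y\le z(t_s)+\int_{t_s}^T y+\frac{C}{s}\int\rho .
\]
The term \(\int y\) cannot be absorbed unless \(\gamma\) is large, so I would refine the cutoff to \(t\ge Ks^{-1/2}\) with \(K^2\gamma\ge2\). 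Then \(\gamma st^2\ge2\), absorption works, and the near-zero piece still costs \(CK^2s^{-1}\). The boundary term satisfies \(z(t_s)=t_sy(t_s)\le Cs^{-1}\norm{\mathbf R_U}^2\). Summing the two pieces gives \(\norm{\mathbf U}_{L^2(Q_T)}^2\le Cs^{-1}\norm{\mathbf R_U}_{L^2(Q_T)}^2\), with \(C\) depending only on \(\gamma\), \(C_B\) (and \(T\) through \(e^{C_BT}\)), as claimed in \eqref{eq:Volterra}.
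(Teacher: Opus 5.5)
Your argument is correct, and it takes a genuinely different route from the paper's. The paper works pointwise in $x$. It writes $\mathbf U$ by variation of constants with the fundamental matrix of $\partial_t+B$, which it bounds by $e^{C_B(t-\tau)}$. It then applies Schur's test to the Volterra kernel $\mathbf 1_{\{\tau<t\}}e^{-s\gamma(t^2-\tau^2)+C_B(t-\tau)}$. The elementary inequality $t^2-\tau^2\ge(t-\tau)^2$ dominates this kernel by a convolution kernel of $L^1$-norm $O(s^{-1/2})$. As a result, the vanishing of the damping $2s\gamma t$ at $t=0$ never has to be treated separately, and no case distinction in $s$ is needed.

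Your energy argument instead isolates that degeneracy explicitly. It uses a short-time Gronwall bound on a window of length of order $s^{-1/2}$, whose length squared is exactly the $s^{-1}$ gain. Beyond the window, a $t$-weighted estimate (the multiplier $z=ty$) closes because $\gamma st^2\ge\gamma K^2\ge2$ there. Your route only uses the one-sided bound $\langle B\mathbf U,\mathbf U\rangle\ge -C_B|\mathbf U|^2$ in $L^2(\R^3)$, with no fundamental matrices or pointwise-in-$x$ ODE theory. It would therefore survive if $B$ were an operator that is merely accretive up to $C_B$, for instance with added skew-adjoint parts. The paper's route is shorter and gives the constant directly.

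One small discrepancy: as written, your constant depends on $T$ through $e^{C_BT}$ and through the separate bounded-$s$ case, whereas the lemma claims dependence only on $\gamma$ and $C_B$. You can remove this within your own framework. Choose $K\ge\max\{\sqrt{2/\gamma},\,C_B/\gamma\}$. Then on $t\ge Ks^{-1/2}$ you have $s\gamma t\ge\gamma K\ge C_B$ for every $s\ge1$, so the absorption needs no large-$s$ threshold. On the window $t\le Ks^{-1/2}\le K$, the Gronwall factor is $e^{C_BK}$ rather than $e^{C_BT}$. If $Ks^{-1/2}\ge T$, the window covers all of $(0,T)$ and the same bound $\int_0^T y\le e^{2C_BK}K^2(2s)^{-1}\norm{\mathbf R_U}_{L^2(Q_T)}^2$ applies. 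In the paper's application $T$ is fixed, so this is cosmetic.
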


\begin{proof}
For almost every \(x\in\R^3\), let \(E_B(t,\tau;x)\) denote the
fundamental matrix determined by
\[
 \partial_tE_B(t,\tau;x)
 +B(x,t)E_B(t,\tau;x)=0,
 \qquad
 E_B(\tau,\tau;x)=I,
 \qquad
 0\le\tau\le t\le T.
\]
Since \(\norm{B(x,t)}\le C_B\) for almost every \((x,t)\), the integral
equation for \(E_B\) gives
\[
 \norm{E_B(t,\tau;x)}
 \le
 1+C_B\int_\tau^t\norm{E_B(t',\tau;x)}\dd t'.
\]
Gronwall's inequality therefore yields
\[
 \norm{E_B(t,\tau;x)}
 \le
 e^{C_B(t-\tau)},
 \qquad
 0\le\tau\le t\le T,
\]
for almost every \(x\in\R^3\). 
Applying variation of constants to
\(\mathbf U_t+(2s\gamma t+B)\mathbf U=\mathbf R_U\), with
\(\mathbf U,\mathbf R_U\) defined in
\eqref{eq:Volterra-conjugated-fields}, gives
\[
 \mathbf U(x,t)
 =
 \int_0^t
 e^{-s\gamma(t^2-\tau^2)}
 E_B(t,\tau;x)
 \mathbf R_U(x,\tau)\dd\tau.
\]
Consequently,
\[
 |\mathbf U(x,t)|
 \le
 \int_0^T
 k_s(t,\tau)
 |\mathbf R_U(x,\tau)|\dd\tau,
\]
where
\[
 k_s(t,\tau)
 =
 \mathbf1_{\{0\le\tau<t\le T\}}
 e^{-s\gamma(t^2-\tau^2)+C_B(t-\tau)}.
\]

For \(0\le\tau\le t\),
\[
 t^2-\tau^2
 =(t-\tau)(t+\tau)
 \ge(t-\tau)^2.
\]
Hence
\[
\begin{aligned}
 \max\left\{
 \sup_{0\le t\le T}\int_0^T k_s(t,\tau)\dd\tau,\,
 \sup_{0\le\tau\le T}\int_0^T k_s(t,\tau)\dd t
 \right\}
 &\le
 \int_0^\infty
 e^{-s\gamma(t')^2+C_Bt'}\dd t'\\
 &\le
 e^{C_B^2/(2s\gamma)}
 \int_0^\infty
 e^{-s\gamma(t')^2/2}\dd t'\\
 &\le
 Cs^{-1/2},
 \qquad s\ge1.
\end{aligned}
\]
Schur's test
\cite[Appendix~A.1, p.~589]{GrafakosModern2014}
therefore gives
\[
 \norm{\mathbf U(x,\cdot)}_{L^2(0,T)}
 \le
 Cs^{-1/2}
 \norm{\mathbf R_U(x,\cdot)}_{L^2(0,T)}
\]
for almost every \(x\), with \(C\) independent of \(x\) and \(s\ge1\).
Squaring this inequality and integrating over \(x\in\R^3\) proves
\eqref{eq:Volterra}.
\end{proof}

The weighted Volterra estimate \eqref{eq:Volterra} can now be applied to
the first-order equation \eqref{eq:p-eta}. This controls the localized field \(p_\eta\)
appearing in \(R_\eta\) defined by \eqref{eq:R-eta}, and thereby
removes the term \(\mathcal R=e^{s\phi}R_\eta\) from the negative-order estimate
\eqref{eq:negative-wave}.

\begin{lemma} 
\label{lem:Volterra-reduction}
Assume the hypotheses of Proposition~\ref{prop:negative-wave}. Then, for
\(s\ge s_{\rm B}\),
\begin{equation}\label{eq:p-est}
 \norm{p_\eta}_s^2
 \le
 \frac Cs
 \bigl(
 \norm P_{L^2}^2+\norm F_s^2
 \bigr)
 +C\mathcal L_s,
\end{equation}
and
\begin{equation}\label{eq:R-est}
 \norm{\mathcal R}_{L^2}^2
 \le
 \frac Cs\norm P_{L^2}^2
 +C\mathcal F_s
 +C\mathcal L_s.
\end{equation}
Consequently,
\begin{equation}\label{eq:negative-wave2}
 s\norm A_{L^2}^2
 +s\norm{J_s^0H}_{L^2}^2
 \le
 \frac Cs\norm P_{L^2}^2
 +C\mathcal F_s
 +Cs^{-2}\mathcal G_s
 +C\mathcal L_s.
\end{equation}
Here \(C>0\) depends only on the fixed a priori parameters and the fixed
cutoff functions introduced above.
\end{lemma}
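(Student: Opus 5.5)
We apply the weighted Volterra estimate of Lemma~\ref{lem:Volterra} to the first-order equation \eqref{eq:p-eta} on \((0,T)\). Since \(e^{s\phi}=e^{s\psi(x)}e^{-s\gamma t^2}\) and \(\psi\) is time independent, we set \(U=e^{s\psi}p_\eta\). Multiplication by \(e^{s\psi}\) commutes with \(\partial_t\) and with the matrix \(B=\beta(1-q)I\), so
\[
 U_t+\beta(1-q)U=e^{s\psi}\bigl(\pi_\eta-F_\eta+\eta'p\bigr),\qquad U(\cdot,0)=0,
\]
where the initial condition comes from \(p(\cdot,0)=0\) in \eqref{eq:acoustic-initial}. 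Here \(C_B\le\beta\) uniformly for \(q\in\Qad\). The regularity \(U\in H^1(0,T;L^2)\) follows from \(p\in W^{1,2}(0,T;L^2)\) and the compact support in \(\mathcal K\), on which \(e^{s\psi}\) is bounded for fixed \(s\). Lemma~\ref{lem:Volterra} then gives
\[
 \norm{p_\eta}_s^2\le \frac Cs\bigl(\norm{\pi_\eta}_s^2+\norm{F_\eta}_s^2+\norm{\eta'p}_s^2\bigr).
\]
To convert this into \eqref{eq:p-est}, we use \(\norm{\pi_\eta}_s^2\le\norm P_{L^2}^2\). The positive-time half of the even full-time norm is at most the full norm, which fixes the factor-two convention noted after Lemma~\ref{lem:temporal-localization}. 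We also use \(\norm{F_\eta}_s\le\norm F_s\) since \(|\eta|\le1\), and \(s^{-1}\norm{\eta'p}_s^2\le\mathcal L_s\) by \eqref{eq:terminal-remainder}.

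For \eqref{eq:R-est}, we expand \(R_\eta\) by \eqref{eq:R-eta} on \((0,T)\), using that \(R^e\) is even so its full norm is twice the positive-time norm. Each term is bounded using \(|q|,|1-q|\le1\):
\[
 \norm{\mathcal R}_{L^2}^2\le C\bigl(\norm{p_\eta}_s^2+\norm{(F_\eta)_t}_s^2+\norm{F_\eta}_s^2+\norm{\eta'\pi}_s^2+\norm{\eta'F}_s^2\bigr).
\]
The \(F_\eta\)-terms are controlled by \(C(\mathcal F_s+\mathcal L_s)\) through \eqref{eq:localized-F-bound}, and the \(\eta'\)-terms are part of \(\mathcal L_s\). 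Inserting \eqref{eq:p-est}, the term \(s^{-1}\norm F_s^2\) is bounded by \(\mathcal F_s\), which gives \eqref{eq:R-est}.

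Finally, we substitute \eqref{eq:R-est} into \eqref{eq:negative-wave} of Proposition~\ref{prop:negative-wave}. The term \(Cs^{-2}\norm P_{L^2}^2\) there is dominated by \(Cs^{-1}\norm P_{L^2}^2\) for \(s\ge1\), and this yields \eqref{eq:negative-wave2}. The argument has no genuine obstacle, since the \(s^{-1}\) gain comes entirely from Lemma~\ref{lem:Volterra}. The only points needing care are bookkeeping ones: the \(e^{s\psi}\) conjugation must commute with the time ODE (true because \(\psi\) depends only on \(x\) and \(B\) is a multiplication), the initial value \(p_\eta(0)=0\) must be used, and the positive-time norms of \(p_\eta,F_\eta\) must be compared with the full-time norms of the reflected fields \(P,\mathcal R\) consistently up to fixed factors.
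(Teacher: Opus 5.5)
Your proposal is correct and follows the paper's proof essentially step for step. You apply Lemma~\ref{lem:Volterra} to \eqref{eq:p-eta} with $U=e^{s\psi}p_\eta$ and $B=\beta(1-q)I_3$, then expand \eqref{eq:R-eta} using \eqref{eq:localized-F-bound}, and finally substitute into \eqref{eq:negative-wave}. Your extra bookkeeping comparing full-time and positive-time norms is sound and makes explicit a fixed factor the paper absorbs into its constants. One small correction: the evenness you need is that of $R_\eta=\eta R^e+\eta'\pi^o$ as a whole, not just of $R^e$; this holds because $\eta'$ and $\pi^o$ are both odd.
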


\begin{proof}
Apply Lemma~\ref{lem:Volterra} to the first-order equation
\eqref{eq:p-eta}. Set
\[
 U=e^{s\psi}p_\eta,
 \qquad
 B=\beta(1-q)I_3,
\]
and
\[
 R_U
 =
 e^{s\psi}
 \bigl(\pi_\eta-F_\eta+\eta'p\bigr).
\]
The coefficient bounds in \eqref{eq:Qad} imply a uniform
\(L^\infty\)-bound for \(B\), and
\[
 p_\eta(0)=p(0)=0
\]
by \eqref{eq:acoustic-initial}. Since
\[
 e^{s\phi}=e^{s\psi}e^{-s\gamma t^2},
\]
the Volterra estimate \eqref{eq:Volterra} gives
\[
 \norm{p_\eta}_s^2
 \le
 \frac Cs
 \left(
 \norm P_{L^2}^2
 +\norm{F_\eta}_s^2
 +\norm{\eta'p}_s^2
 \right).
\]
The term \(\norm{\eta'p}_s^2\) is included in
\(\mathcal L_s\) by \eqref{eq:terminal-remainder}, while
\(\norm{F_\eta}_s\le\norm F_s\) follows from
\eqref{eq:localized-p-F} and \(0\le\eta\le1\) in
\eqref{eq:temporal-cutoff}. Since \(s\ge1\), this proves
\eqref{eq:p-est}.

Next, the representation \eqref{eq:R-eta}, together with the
localized vector-source bound \eqref{eq:localized-F-bound}, gives
\[
 \norm{\mathcal R}_{L^2}^2
 \le
 C\norm{p_\eta}_s^2
 +C\bigl(
 \norm{F_t}_s^2+\norm F_s^2
 \bigr)
 +C\mathcal L_s.
\]
Substituting \eqref{eq:p-est} and using the definition of
\(\mathcal F_s\) in \eqref{eq:weighted-norms} yields
\eqref{eq:R-est}.

Finally, substituting \eqref{eq:R-est} into the negative-order estimate
\eqref{eq:negative-wave} of Proposition~\ref{prop:negative-wave} gives
\eqref{eq:negative-wave2}.
\end{proof}

The estimate \eqref{eq:negative-wave2} no longer contains the remainder
\(\mathcal R\). Its only uncontrolled acoustic quantity is the full vector
field \(P=e^{s\phi}\pi_\eta\). The frequency decomposition in
Lemma~\ref{lem:Helmholtz}, followed by
Lemmas~\ref{lem:low}--\ref{lem:transverse}, provides the remaining control
of \(P\).



\subsection{Fourier decomposition and closure}
\label{sec:fourier-closure}

Lemma~\ref{lem:Volterra-reduction} reduces the negative-order estimate
\eqref{eq:negative-wave} to \eqref{eq:negative-wave2}. The only
uncontrolled acoustic quantity on the right-hand side of
\eqref{eq:negative-wave2} is the full vector field
$
 P=e^{s\phi}\pi_\eta
$
defined in \eqref{eq:conjugated-acoustic-variables}. The relation
\eqref{eq:H-divP} controls a weighted divergence of \(P\), but does not by
itself determine all components of \(P\).

To recover the full vector field \(P=e^{s\phi}\pi_\eta\), we use the
decomposition
\[
 P=T_0P+T_LP+T_TP
\]
from \eqref{eq:decomp}, where \(T_0P\), \(T_LP\), and \(T_TP\) denote,
respectively, the low-frequency, high-frequency longitudinal, and
high-frequency transverse components of \(P\). The low-frequency component is estimated from the evolution equation
\eqref{eq:P-eq} by the weighted Volterra estimate
\eqref{eq:Volterra}. The high-frequency longitudinal component is controlled
directly by the divergence relation \eqref{eq:H-divP}, while the transverse
component is again estimated from \eqref{eq:P-eq}, using the fact that the
transverse projection annihilates \(\nabla A\). The three estimates are
combined in Proposition~\ref{prop:acoustic}.

\begin{lemma}
\label{lem:Helmholtz}
Let \(q\in\Qad\), where \(\Qad\) is defined in \eqref{eq:Qad},
and fix \(K\ge2\). Choose smooth radial functions
\(0\le\chi_0,\chi_1\le1\) satisfying
\begin{equation}\label{eq:frequency-cutoffs}
 \chi_0+\chi_1=1,\qquad
 \chi_0=1\quad\text{on }|\xi|\le K,\qquad
 \chi_0=0\quad\text{on }|\xi|\ge2K.
\end{equation}
For \(s\ge1\), define
\begin{equation}\label{eq:Helmholtz-multipliers}
 T_0=\chi_0(\mathsf D/s),\qquad
 T_L=\chi_1(\mathsf D/s)
 \frac{\mathsf D\otimes\mathsf D}{|\mathsf D|^2},
 \qquad
 T_T=\chi_1(\mathsf D/s)
 \left(
 I-\frac{\mathsf D\otimes\mathsf D}{|\mathsf D|^2}
 \right).
\end{equation}
These operators are bounded on \(L^2(\R^3;\R^3)\), uniformly in
\(s\ge1\), and satisfy
\begin{equation}\label{eq:decomp}
 I=T_0+T_L+T_T,
 \qquad
 T_T\nabla A=0
\end{equation}
for every scalar \(A\in H^1(\R^3)\). Moreover,
\begin{equation}\label{eq:T-comm}
 \norm{[T_0,q]}_{L^2\to L^2}
 +
 \norm{[T_T,q]}_{L^2\to L^2}
 \le
 C_Ks^{-1},
 \qquad s\ge1,
\end{equation}
where \(C_K>0\) may depend on \(K\), \(M\), and the fixed radial cutoffs
in \eqref{eq:frequency-cutoffs}, but is independent of \(s\) and of the
particular coefficient \(q\).
\end{lemma}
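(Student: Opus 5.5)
The plan is to work entirely on the Fourier side, since \(T_0,T_L,T_T\) are Fourier multipliers (matrix-valued for \(T_L,T_T\)). Write \(\zeta=\xi/|\xi|\) and \(\Pi_L(\xi)=\zeta\otimes\zeta\), \(\Pi_T(\xi)=I-\zeta\otimes\zeta\). These are orthogonal projections for each \(\xi\ne0\), so their operator norms are \(\le1\). Together with \(0\le\chi_0,\chi_1\le1\), Plancherel then gives uniform \(L^2\)-boundedness of all three operators, independently of \(s\).

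The identity \(I=T_0+T_L+T_T\) follows from \(\chi_0+\chi_1=1\) and \(\Pi_L+\Pi_T=I\). The singularity of \(\Pi_L,\Pi_T\) at \(\xi=0\) is harmless, because \(\chi_1(\xi/s)=0\) for \(|\xi|\le Ks\). For \(A\in H^1\), we have \(\widehat{\nabla A}=i\xi\widehat A\), and \(\Pi_T(\xi)\xi=0\); hence \(T_T\nabla A=0\).

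For the commutator bounds \eqref{eq:T-comm}, I would write each multiplier as \(m(\mathsf D/s)\), where \(m\) is a fixed matrix symbol independent of \(s\). Both \(m=\chi_0\) and \(m=\chi_1\Pi_T\) are homogeneous of degree zero in the dilation sense: \(\Pi_T(\xi/s)=\Pi_T(\xi)\). The key observation is that \(m\) is smooth on \(\R^3\). For \(\chi_0\) this is clear, and \(\chi_1\Pi_T\) vanishes near the origin. Moreover \(m\) is bounded with all derivatives bounded, and \(\partial_\xi^\alpha m=O(|\xi|^{-|\alpha|})\) for \(|\xi|\ge K\).

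To handle the commutator with \(q\), I would reduce to a statement about \(\nabla q\). Write \(m=m_\infty+\tilde m\), where \(m_\infty\) is the part that is constant at infinity. For \(T_0\) take \(m_\infty=0\), so that \(\tilde m=\chi_0\in C_c^\infty\). For \(T_T\), take \(m_\infty=\Pi_T\) and \(\tilde m=-\chi_0\Pi_T\). Then \(\tilde m\) is compactly supported and bounded, but singular at \(0\). The part \(\Pi_T(\mathsf D)\), i.e. \(I\) minus the Riesz-transform tensor \(R\otimes R\), has a Calderón commutator with \(q\), and the standard bound for it is in terms of \(\|\nabla q\|_{L^\infty}\). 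That bound carries no factor \(s^{-1}\), so this splitting is the wrong route.

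Instead I would argue directly with the symbol \(m(\xi/s)\) and the kernel estimate of the Coifman–Meyer type. The symbol has \(|\partial^\alpha_\xi m(\xi/s)|\le C_\alpha(s+|\xi|)^{-|\alpha|}\) for \(|\alpha|\le 4\), using smoothness near \(0\) and homogeneous decay beyond \(Ks\). Its kernel \(k_s\) therefore satisfies \(|k_s(z)|\le C\,s^3(1+s|z|)^{-4}|z|^{-0}\) off the diagonal. More usefully, \(\int|z|\,|k_s^{\rm reg}(z)|\,dz\le Cs^{-1}\) for the part localized to \(|\xi|\lesssim s\). The commutator kernel is \(k_s(x-y)(q(y)-q(x))\). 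For the smooth low-frequency piece \(\chi_0(\mathsf D/s)\), its kernel is \(s^3\check\chi_0(sz)\), a Schwartz function. The bound \(|q(y)-q(x)|\le M|x-y|\) and Young's inequality then give \(Cs^{-1}M\), exactly as in the proof of \eqref{eq:Jq-comm}.

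For \(T_T\), note \(\chi_1\Pi_T=\Pi_T-\chi_0\Pi_T\), so \([T_T,q]=[\Pi_T(\mathsf D),q]-[\chi_0\Pi_T(\mathsf D/s),q]\). This is the main obstacle: the first term is \(O(1)\), not \(O(s^{-1})\). I would therefore not commute with \(\Pi_T\) alone. Instead I would use the symbolic calculus estimate \(\|[m(\mathsf D/s),q]\|\le C\sup_{|\alpha|=1}\sup_\xi|\partial_\xi^\alpha m(\xi/s)|\cdot\|\nabla q\|_{W^{N,\infty}}\) plus higher-order terms. Here \(\sup_\xi|\nabla_\xi[m(\xi/s)]|\le C_K s^{-1}\), because \(m\) is smooth with globally bounded gradient (its gradient is \(O(1/K)\) for \(|\xi|\ge K\)). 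This is valid for symbols in \(S^0\) uniformly in the dilation parameter, using \(q\in W^{5,\infty}\). Concretely, the first-order term of the commutator expansion is \(-i\nabla_\xi m(\mathsf D/s)\cdot\nabla q\), which has norm \(\le C_K M s^{-1}\). The remainder involves second derivatives of the symbol, of size \(s^{-2}\), and derivatives of \(q\) up to a fixed order \(\le5\). Controlling this remainder uniformly in \(s\) is routine pseudodifferential calculus (e.g. via the Kato/Calderón–Vaillancourt bound) but would need to be checked carefully, and I expect it to be the technical core of the proof.
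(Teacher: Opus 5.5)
Your arguments for the uniform $L^2$-boundedness, for $I=T_0+T_L+T_T$ and $T_T\nabla A=0$, and for $[T_0,q]$ coincide with the paper's. For $[T_0,q]$ this means the Schwartz kernel $s^3\check\chi_0(s\,\cdot)$ with first moment $O(s^{-1})$, the Lipschitz bound on $q$, and Young's inequality. The gap is in the one nontrivial part of \eqref{eq:T-comm}, the bound for $[T_T,q]$. You assert a symbolic expansion of $[m(\mathsf D/s),q]$. You check only that its first-order term is $O(s^{-1})$, because $\nabla_\xi[m(\xi/s)]=s^{-1}(\nabla m)(\xi/s)$. The remainder you leave as ``routine'' but unverified. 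That remainder is exactly where uniformity in $s$ must be established. Bounding it by an $L^\infty$-based Calder\'on--Vaillancourt argument requires counting $x$-derivatives of $q$ against the available $W^{5,\infty}$ regularity, which you do not do. As written, the key estimate is asserted rather than proved.

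The idea you are missing is the paper's: factor $\chi_1\Pi_T$ multiplicatively instead of additively. Take a fixed smooth symbol $\widetilde\Pi_T\in S^0$ equal to $\Pi_T$ for $|\xi|\ge1$. Then $T_T=\chi_1(\mathsf D/s)\widetilde\Pi_T(\mathsf D)$ and
\[
 [T_T,q]=\chi_1(\mathsf D/s)\,[\widetilde\Pi_T(\mathsf D),q]-[T_0,q]\,\widetilde\Pi_T(\mathsf D).
\]
The second term is already controlled. The commutator $[\widetilde\Pi_T(\mathsf D),q]$ is $O(1)$ on $L^2$, as you noticed, but it is smoothing of order one. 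A Calder\'on-type commutator estimate gives $\norm{[\widetilde\Pi_T(\mathsf D),q]U}_{H^1}\le C\norm{q}_{W^{1,\infty}}\norm{U}_{L^2}$. Since $\chi_1(\mathsf D/s)$ vanishes for $|\xi|\le Ks$, it converts this derivative into the factor $(Ks)^{-1}$.

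Alternatively, your own observation that $m$ is smooth on $\R^3$ with bounded gradient finishes the proof with no expansion at all. Use the convention $\hat f(\zeta)=\int_{\R^3}e^{-ix\cdot\zeta}f(x)\dd x$, and let $a$ be any globally Lipschitz (matrix) symbol. Then
\[
 \widehat{[a(\mathsf D),q]u}(\zeta)=(2\pi)^{-3}\int_{\R^3}\bigl(a(\zeta)-a(\xi)\bigr)\hat q(\zeta-\xi)\hat u(\xi)\dd\xi.
\]
The bound $|a(\zeta)-a(\xi)|\le\norm{\nabla a}_{L^\infty}|\zeta-\xi|$, Young's inequality, and Plancherel give $\norm{[a(\mathsf D),q]}_{L^2\to L^2}\le(2\pi)^{-3}\norm{\nabla a}_{L^\infty}\norm{\,|\eta|\hat q\,}_{L^1}$. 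For $a=m(\cdot/s)$ this yields the factor $s^{-1}\norm{\nabla m}_{L^\infty}$. Also $\norm{\,|\eta|\hat q\,}_{L^1}\le C\norm{q}_{H^3}\le C|D|^{1/2}M$, because $\supp q\subset D$. This handles $T_0$ and $T_T$ simultaneously. Compared with the paper's route, which uses only $\norm{\nabla q}_{L^\infty}$, it costs a dependence on $|D|$ and on three derivatives of $q$; both are harmless here.
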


\begin{proof}
The identities in \eqref{eq:decomp} follow directly from
\eqref{eq:frequency-cutoffs}--\eqref{eq:Helmholtz-multipliers}. In
particular, the factor \(\chi_1\) vanishes near \(\xi=0\), so the
longitudinal and transverse projection symbols in
\eqref{eq:Helmholtz-multipliers} are smooth on their supports.

Let \(k_K=\mathcal F^{-1}\chi_0\), where \(\chi_0\) is defined in \eqref{eq:frequency-cutoffs}. Since \(\chi_0\in C_c^\infty(\R^3)\), its inverse Fourier transform \(k_K\) belongs to the Schwartz space \(\mathcal S(\R^3)\); see \cite[Section~2.2]{Grafakos2014}. By Fourier scaling, the operator \(T_0=\chi_0(\mathsf D/s)\) defined in \eqref{eq:Helmholtz-multipliers} is convolution with \(s^3k_K(s\cdot)\). The change of variables \(z=sx\) gives
\[
\begin{aligned}
\int_{\R^3}|x|s^3|k_K(sx)|\dd x
&=s^{-1}\int_{\R^3}|z||k_K(z)|\dd z \le C_Ks^{-1},
\end{aligned}
\]
where \(C_K\) is independent of \(s\).

For \(U\in L^2(\R^3;\R^3)\), the commutator \([T_0,q]U=T_0(qU)-qT_0U\) has the integral representation
\[
[T_0,q]U(x)
=
\int_{\R^3}s^3k_K(s(x-y))
\bigl(q(y)-q(x)\bigr)U(y)\dd y.
\]
Since \(q\in\Qad\), the Lipschitz bound
\[
|q(y)-q(x)|
\le \norm{\nabla q}_{L^\infty}|x-y|
\]
holds. Young's convolution inequality therefore yields
\[
\begin{aligned}
\norm{[T_0,q]U}_{L^2}
&\le \norm{\nabla q}_{L^\infty}
\left(\int_{\R^3}|x|s^3|k_K(sx)|\dd x\right)
\norm U_{L^2}.
\end{aligned}
\]
Using the uniform bound \(\norm{\nabla q}_{L^\infty}\le M\) from \eqref{eq:Qad}, we obtain
\begin{equation}\label{eq:T0-commutator}
\norm{[T_0,q]U}_{L^2}
\le C_Ks^{-1}\norm U_{L^2},
\end{equation}
where \(C_K\) is independent of \(s\) and of the particular coefficient \(q\in\Qad\).

It remains to establish the bound for \([T_T,q]\) in \eqref{eq:T-comm}. Choose a fixed smooth order-zero matrix-valued symbol \(\widetilde\Pi_T\) such that
\[
\widetilde\Pi_T(\xi)
=
I-\frac{\xi\otimes\xi}{|\xi|^2},
\qquad |\xi|\ge1.
\]
Then
\[
 T_T
 =
 \chi_1(\mathsf D/s)\widetilde\Pi_T(\mathsf D),
\]
and
\begin{equation}\label{eq:TT-commutator-decomposition}
\begin{aligned}
 [T_T,q]
 ={}&
 \chi_1(\mathsf D/s)
 [\widetilde\Pi_T(\mathsf D),q]\\
 &+
 [\chi_1(\mathsf D/s),q]
 \widetilde\Pi_T(\mathsf D).
\end{aligned}
\end{equation}
Since \(\chi_1=1-\chi_0\), the second term in
\eqref{eq:TT-commutator-decomposition} is controlled by
\eqref{eq:T0-commutator}.

For the first term in \eqref{eq:TT-commutator-decomposition}, the
fixed symbol \(\widetilde\Pi_T\) belongs to the order-zero class
\(S^0(\R^3)\). Since \(q\in\Qad\), the coefficient is Lipschitz,
with \(\norm{q}_{W^{1,\infty}}\le M\). Apply the order-zero case of
\cite[Corollary~A.2]{Smith2014}, with Sobolev exponent zero, to
\(q\) and each scalar entry of the fixed matrix-valued symbol
\(\widetilde\Pi_T\). Summing the resulting component estimates gives
\begin{equation}\label{eq:fixed-transverse-commutator}
 \norm{[\widetilde\Pi_T(\mathsf D),q]U}_{H^1(\R^3)}
 \le C\norm{q}_{W^{1,\infty}(\R^3)}\norm U_{L^2(\R^3)}.
\end{equation}
Here \(C\) depends only on the fixed symbol \(\widetilde\Pi_T\), not on
\(K\), \(s\), or the particular coefficient \(q\).

By \eqref{eq:frequency-cutoffs}, the symbol
\(\chi_1(\xi/s)\) vanishes for \(|\xi|\le Ks\), and
\(|\chi_1|\le1\). Hence Plancherel's theorem gives
\begin{equation}\label{eq:high-frequency-smoothing}
 \norm{\chi_1(\mathsf D/s)g}_{L^2(\R^3)}
 \le (Ks)^{-1}\norm{\nabla g}_{L^2(\R^3)}
 \le (Ks)^{-1}\norm g_{H^1(\R^3)},
 \qquad g\in H^1(\R^3).
\end{equation}
Combining \eqref{eq:fixed-transverse-commutator} and
\eqref{eq:high-frequency-smoothing}, and using \(K\ge2\), yields
\begin{equation}\label{eq:high-PiT-commutator}
 \norm{\chi_1(\mathsf D/s)
 [\widetilde\Pi_T(\mathsf D),q]U}_{L^2}
 \le C M(Ks)^{-1}\norm U_{L^2}
 \le C M s^{-1}\norm U_{L^2}.
\end{equation}
Finally, \eqref{eq:TT-commutator-decomposition},
\eqref{eq:T0-commutator}, and \eqref{eq:high-PiT-commutator}
prove \eqref{eq:T-comm}.
\end{proof}

On \((0,T)\), where \(\sigma=1\), multiplying the second equation
in the temporally localized system
\eqref{eq:temporally-localized-system} by \(e^{s\phi}\) gives
\begin{equation}\label{eq:P-eq}
 P_t+2s\gamma tP+\beta qP-\nabla A+sA\nabla\psi
 =
 \mathcal R.
\end{equation}
In Lemmas~\ref{lem:low}--\ref{lem:transverse},
\(\mathcal F_s\) and \(\mathcal G_s\) denote the weighted source
quantities defined in \eqref{eq:weighted-norms}, before temporal
localization, and \(\mathcal L_s\) denotes the terminal remainder defined
in \eqref{eq:terminal-remainder}.

For the low-frequency component \(T_0P\), the Fourier support condition \eqref{eq:frequency-cutoffs} gives
\[
\norm{T_0\nabla A}_{L^2}
\le 2Ks\norm A_{L^2}.
\]
We apply the conjugated Volterra estimate \eqref{eq:Volterra} to the evolution equation for \(T_0P\) obtained by applying \(T_0\) to \eqref{eq:P-eq}, as detailed in the proof of Lemma~\ref{lem:low}.

\begin{lemma}
\label{lem:low}
Assume the hypotheses of Lemma~\ref{lem:Volterra-reduction}. For every
fixed \(K\ge2\), there exists \(C_K>0\), independent of \(s\) and of the
particular coefficient \(q\in\Qad\), such that
\begin{equation}\label{eq:low}
 \norm{T_0P}_{L^2}^2
 \le
 \frac{C_K}{s}\norm P_{L^2}^2
 +C_K\mathcal F_s
 +C_Ks^{-2}\mathcal G_s
 +C_K\mathcal L_s
\end{equation}
for every \(s\ge s_{\rm B}\).
\end{lemma}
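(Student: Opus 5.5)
We apply \(T_0\) to the conjugated evolution equation \eqref{eq:P-eq} on \((0,T)\) and view the result as a first-order Volterra equation for \(\mathbf U=T_0P\). Since \(T_0\) acts only in space, it commutes with \(\partial_t\) and with multiplication by \(2s\gamma t\). Writing \(T_0(qP)=qT_0P+[T_0,q]P\), we obtain
\[
 (T_0P)_t+(2s\gamma t+\beta q)T_0P
 =
 T_0\mathcal R+T_0\nabla A-sT_0(A\nabla\psi)-\beta[T_0,q]P .
\]
The initial value vanishes, \(T_0P(\cdot,0)=0\), because \(\pi_\eta(\cdot,0)=0\) by \eqref{eq:temporally-localized-traces}. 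The coefficient \(B=\beta q I_3\) is uniformly bounded by \eqref{eq:Qad}. The conjugated form of Lemma~\ref{lem:Volterra} then gives
\[
 \norm{T_0P}_{L^2}^2
 \le \frac Cs\Bigl(\norm{T_0\mathcal R}_{L^2}^2+\norm{T_0\nabla A}_{L^2}^2
 +s^2\norm{T_0(A\nabla\psi)}_{L^2}^2+\norm{[T_0,q]P}_{L^2}^2\Bigr).
\]
Here we use that \(P\), \(\mathcal R\) and \(A\) already carry the factor \(e^{-s\gamma t^2}\) through \(e^{s\phi}\). The required \(H^1\)-in-time regularity of \(T_0P\) follows from \eqref{eq:acoustic-field-regularity}.

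Next we bound each term on the right.
\begin{itemize}
\item The Fourier support of \(\chi_0(\cdot/s)\) gives \(\norm{T_0\nabla A}_{L^2}\le 2Ks\norm A_{L^2}\).
\item Since \(\nabla\psi\) is bounded on the fixed support \(\mathcal K\) and \(T_0\) is a contraction, \(s\norm{T_0(A\nabla\psi)}_{L^2}\le Cs\norm A_{L^2}\).
\item By \eqref{eq:T-comm}, \(\norm{[T_0,q]P}_{L^2}\le C_Ks^{-1}\norm P_{L^2}\).
\item By \eqref{eq:R-est}, \(\norm{T_0\mathcal R}_{L^2}^2\le\norm{\mathcal R}_{L^2}^2\le Cs^{-1}\norm P_{L^2}^2+C\mathcal F_s+C\mathcal L_s\).
\end{itemize}
Collecting these, we get
\[
 \norm{T_0P}_{L^2}^2\le C_K s\norm A_{L^2}^2+\frac{C_K}{s}\norm P_{L^2}^2+C\mathcal F_s+C\mathcal L_s .
\]

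The step I expect to be the main obstacle is the term \(C_Ks\norm A_{L^2}^2\). The Volterra gain \(s^{-1}\) only just compensates the factor \(s^2\) coming from the low-frequency gradient and from the weight term \(sA\nabla\psi\). So we must invoke \eqref{eq:negative-wave2} with its full strength:
\[
 s\norm A_{L^2}^2\le \frac Cs\norm P_{L^2}^2+C\mathcal F_s+Cs^{-2}\mathcal G_s+C\mathcal L_s .
\]
Substituting this bound yields exactly \eqref{eq:low}, with constants depending on \(K\) but not on \(s\) or \(q\). The threshold stays \(s\ge s_{\rm B}\), inherited from Lemma~\ref{lem:Volterra-reduction}.

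One detail needs checking. Lemma~\ref{lem:Volterra} is stated on \((0,T)\) with the weight \(e^{-2s\gamma t^2}\), whereas \eqref{eq:P-eq} holds on \((0,T)\) with \(\sigma=1\). The full-time norms of the reflected fields differ from these only by a factor \(2\), as noted after Lemma~\ref{lem:temporal-localization}, so the estimate transfers to the norms used in \eqref{eq:low}.
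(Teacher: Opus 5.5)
Your proposal is correct and follows the paper's proof essentially step for step. You apply $T_0$ to \eqref{eq:P-eq}, use the conjugated Volterra estimate with $B=\beta qI_3$ and $T_0P(0)=0$, bound $T_0\nabla A$ and $sT_0(A\nabla\psi)$ by $Cs\norm A_{L^2}$ and the commutator via \eqref{eq:T-comm}, and then substitute \eqref{eq:R-est} and \eqref{eq:negative-wave2} to absorb the $C_Ks\norm A_{L^2}^2$ term.
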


\begin{proof}
Applying \(T_0\) to \eqref{eq:P-eq} gives
\[
\begin{aligned}
 (T_0P)_t
 +(2s\gamma t+\beta q)T_0P
 ={}&
 T_0(\mathcal R+\nabla A-sA\nabla\psi)-\beta[T_0,q]P.
\end{aligned}
\]
By the localized initial conditions
\eqref{eq:temporally-localized-traces},
\[
 P(0)=e^{s\psi}\pi_\eta(0)=0,
\]
and hence \(T_0P(0)=0\). We may therefore apply the conjugated form of
Lemma~\ref{lem:Volterra} with
\[
 \mathbf U=T_0P,
 \qquad
 B=\beta qI_3.
\]

On the support of \(T_0\),
\[
 |\xi|\le2Ks,
\]
so
\[
 \norm{T_0\nabla A}_{L^2}
 \le
 2Ks\norm A_{L^2}.
\]
Since \(\nabla\psi\) is bounded on the fixed spatial support and
\(T_0\) is uniformly bounded on \(L^2\),
\[
 \norm{sT_0(A\nabla\psi)}_{L^2}
 \le
 Cs\norm A_{L^2}.
\]
Together with \eqref{eq:T-comm}, the Volterra estimate gives
\begin{equation}\label{eq:low-preliminary}
 \norm{T_0P}_{L^2}^2
 \le
 C_Ks\norm A_{L^2}^2
 +\frac Cs\norm{\mathcal R}_{L^2}^2
 +\frac{C_K}{s^3}\norm P_{L^2}^2.
\end{equation}
Substituting \eqref{eq:negative-wave2} and
\eqref{eq:R-est} into \eqref{eq:low-preliminary}, and using \(s\ge1\),
gives \eqref{eq:low}.
\end{proof}

The high-frequency longitudinal component \(T_LP\), with \(T_L\) defined in \eqref{eq:Helmholtz-multipliers}, is estimated using the divergence identity \eqref{eq:H-divP}. Since \(\chi_1(\xi/s)\) is supported where \(|\xi|\ge Ks\) by \eqref{eq:frequency-cutoffs}, the Fourier multiplier \(s\nabla\Delta^{-1}\chi_1(\mathsf D/s)\) has \(L^2\)-operator norm bounded by \(C/K\). Consequently, the term \(s\nabla\psi\cdot P\) contributes at most \(CK^{-2}\|P\|_{L^2}^2\) to the squared estimate for \(T_LP\), with \(C\) independent of \(K\). This \(K^{-2}\)-term is absorbed when passing from \eqref{eq:P-pre-absorption} to \eqref{eq:closed-P-estimate} in the proof of Proposition~\ref{prop:acoustic}.

\begin{lemma}
\label{lem:longitudinal}
Assume the hypotheses of Lemma~\ref{lem:Volterra-reduction}. There exists
\(C>0\), independent of \(K\ge2\), \(s\ge s_{\rm B}\), and the
particular coefficient \(q\in\Qad\), such that
\begin{equation}\label{eq:longitudinal}
 \norm{T_LP}_{L^2}^2
 \le
 \left(
 \frac C{K^2}+\frac C{s^2}
 \right)\norm P_{L^2}^2
 +\frac Cs\mathcal F_s
 +Cs^{-2}\mathcal G_s
 +C\mathcal L_s
\end{equation}
for every \(K\ge2\) and \(s\ge s_{\rm B}\).
\end{lemma}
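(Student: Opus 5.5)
The plan is to read the divergence identity \eqref{eq:H-divP} in Fourier variables and invert it on the high-frequency region. Writing \(\widehat{\diver P}=i\xi\cdot\widehat P\), we have
\[
 \widehat{T_LP}=\chi_1(\xi/s)\frac{\xi(\xi\cdot\widehat P)}{|\xi|^2}
 =-i\chi_1(\xi/s)\frac{\xi}{|\xi|^2}\widehat{\diver P}.
\]
This gives \(T_LP=\mathcal M\diver P\), where \(\mathcal M\) has symbol \(-i\chi_1(\xi/s)\xi|\xi|^{-2}\). Substituting \(\diver P=H+s\nabla\psi\cdot P-e^{s\phi}G_\eta\) from \eqref{eq:H-divP} yields
\[
 T_LP=\mathcal MH+\mathcal M\bigl(s\nabla\psi\cdot P\bigr)-\mathcal M\bigl(e^{s\phi}G_\eta\bigr).
\]
I will estimate the three terms separately.

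For the first term, \(\chi_1(\xi/s)\) vanishes on \(|\xi|\le Ks\). There \(s^2+|\xi|^2\le 2|\xi|^2\), so
\[
 \bigl|\chi_1(\xi/s)\xi|\xi|^{-2}\bigr|\le|\xi|^{-1}\le\sqrt2\,(s^2+|\xi|^2)^{-1/2}.
\]
Plancherel then gives \(\norm{\mathcal MH}_{L^2}\le\sqrt2\norm{J_s^0H}_{L^2}\). Lemma~\ref{lem:Volterra-reduction}, through \eqref{eq:negative-wave2} divided by \(s\), controls \(\norm{J_s^0H}_{L^2}^2\) by
\[
 s^{-2}\norm P^2_{L^2}+s^{-1}\mathcal F_s+s^{-3}\mathcal G_s+s^{-1}\mathcal L_s.
\]
These terms are consistent with \eqref{eq:longitudinal}.

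The second term carries the \(K\)-dependence. The symbol of \(s\mathcal M\) is bounded by \(s|\xi|^{-1}\le K^{-1}\) on its support. Since \(\nabla\psi\) is bounded on \(\mathcal K\) and \(P\) is supported there, \(\norm{\mathcal M(s\nabla\psi\cdot P)}_{L^2}^2\le CK^{-2}\norm P_{L^2}^2\), with \(C\) independent of \(K\) and \(s\). For the third term, the symbol of \(\mathcal M\) is bounded by \((Ks)^{-1}\le s^{-1}\). Therefore \(\norm{\mathcal M(e^{s\phi}G_\eta)}_{L^2}^2\le s^{-2}\norm{e^{s\phi}G_\eta}^2_{L^2}\), and \eqref{eq:localized-G-bound} bounds this by \(Cs^{-2}(\mathcal G_s+\mathcal L_s)\).

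Summing the three squared bounds gives \eqref{eq:longitudinal}. The main point to watch is that all constants stay independent of \(K\). This holds because every multiplier bound uses only \(|\xi|\ge Ks\) with \(K\ge2\), and the \(K\)-dependence enters favorably through \(K^{-1}\), or is discarded by \(K^{-1}\le1\). A second point is that \eqref{eq:H-divP} holds with \(H\), \(P\) and \(G_\eta\) treated as distributions on \(\R^3\) at each time. Since \(\mathcal M\diver\) is a bounded order-zero multiplier on \(L^2\), the Fourier manipulation is legitimate at the available regularity, and we integrate in time at the end.
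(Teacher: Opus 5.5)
Your proposal is correct and matches the paper's proof essentially step for step. Like the paper, you substitute \eqref{eq:H-divP} into \(T_LP\) and bound the three multipliers on \(|\xi|\ge Ks\) by \(C(s^2+|\xi|^2)^{-1/2}\), \(C/K\) and \(C/(Ks)\); you then close with \eqref{eq:negative-wave2} divided by \(s\) and \eqref{eq:localized-G-bound}, and the only differences (your \(\sqrt2\) versus the paper's \(\sqrt5/2\), and dropping the harmless factor \(K^{-2}\) in the \(G_\eta\) term) are cosmetic.
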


\begin{proof}
By \eqref{eq:H-divP},
\[
 \diver P
 =
 H+s\nabla\psi\cdot P-e^{s\phi}G_\eta.
\]
Applying the high-frequency longitudinal projection in
\eqref{eq:Helmholtz-multipliers} gives
\[
\begin{aligned}
 T_LP
 ={}&
 \nabla\Delta^{-1}\chi_1(\mathsf D/s)H\\
 &+
 s\nabla\Delta^{-1}\chi_1(\mathsf D/s)
 (\nabla\psi\cdot P)\\
 &-
 \nabla\Delta^{-1}\chi_1(\mathsf D/s)
 (e^{s\phi}G_\eta).
\end{aligned}
\]
By \eqref{eq:frequency-cutoffs},
\[
 \chi_1(\xi/s)\ne0
 \quad\Longrightarrow\quad
 |\xi|\ge Ks.
\]
On the region \(\lvert\xi\rvert\ge Ks\), the multipliers associated
with the \(H\), \(s\nabla\psi\cdot P\), and \(e^{s\phi}G_\eta\) terms
in the preceding decomposition have the respective bounds
\[
 C(s^2+|\xi|^2)^{-1/2},
 \qquad
 \frac C K,
 \qquad
 \frac C{Ks}.
\]
For the first multiplier, indeed,
\[
 \frac{(s^2+|\xi|^2)^{1/2}}{|\xi|}
 \le
 (1+K^{-2})^{1/2}
 \le
 \frac{\sqrt5}{2},
 \qquad K\ge2.
\]
Hence
\begin{equation}\label{eq:longitudinal-preliminary}
 \norm{T_LP}_{L^2}^2
 \le
 C\norm{J_s^0H}_{L^2}^2
 +\frac C{K^2}\norm P_{L^2}^2
 +\frac C{K^2s^2}
 \norm{e^{s\phi}G_\eta}_{L^2}^2.
\end{equation}
Dividing \eqref{eq:negative-wave2} by \(s\) and using
\eqref{eq:localized-G-bound} for the \(G_\eta\) term in \eqref{eq:longitudinal-preliminary} gives
\eqref{eq:longitudinal}. The additional terminal contribution is
absorbed into \(C\mathcal L_s\), since \(K\ge2\) and \(s\ge1\).
\end{proof}

For the high-frequency transverse component,
\[
 T_T\nabla A=0
\]
by \eqref{eq:decomp}. Thus the derivative term \(\nabla A\) in
\eqref{eq:P-eq} disappears after applying \(T_T\), and the conjugated
Volterra estimate can again be used.

\begin{lemma}\label{lem:transverse}
Assume the hypotheses of Lemma~\ref{lem:Volterra-reduction}. For every
fixed \(K\ge2\), there exists \(C_K>0\), independent of \(s\) and of the
particular coefficient \(q\in\Qad\), such that
\begin{equation}\label{eq:transverse}
 \norm{T_TP}_{L^2}^2
 \le
 \frac{C_K}{s}\norm P_{L^2}^2
 +C_K\mathcal F_s
 +C_Ks^{-2}\mathcal G_s
 +C_K\mathcal L_s
\end{equation}
for every \(s\ge s_{\rm B}\).
\end{lemma}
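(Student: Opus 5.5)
We apply the transverse projection \(T_T\) to the conjugated evolution equation \eqref{eq:P-eq}, in the same way as Lemma~\ref{lem:low} handles \(T_0P\). By \eqref{eq:decomp}, \(T_T\nabla A=0\), so the term \(\nabla A\) drops out. Commuting \(T_T\) with the multiplication by \(q\) gives
\[
 (T_TP)_t+(2s\gamma t+\beta q)T_TP
 =
 T_T\mathcal R-sT_T(A\nabla\psi)-\beta[T_T,q]P
 \quad\text{on }(0,T).
\]
Here \(2s\gamma t\) is a scalar function of time and commutes with \(T_T\). By \eqref{eq:temporally-localized-traces} we have \(P(0)=e^{s\psi}\pi_\eta(0)=0\), hence \(T_TP(0)=0\). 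The regularity \(P\in H^1(0,T;L^2)\) follows from \eqref{eq:acoustic-field-regularity} for each fixed \(s\), and \(T_T\) is bounded on \(L^2\). This justifies applying the conjugated form of Lemma~\ref{lem:Volterra} with \(\mathbf U=T_TP\) and \(B=\beta qI_3\), whose norm is bounded by \(\beta(1-c_0)\).

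The Volterra estimate then gives
\[
 \norm{T_TP}_{L^2}^2
 \le \frac Cs\Bigl(\norm{T_T\mathcal R}_{L^2}^2+s^2\norm{T_T(A\nabla\psi)}_{L^2}^2+\norm{[T_T,q]P}_{L^2}^2\Bigr).
\]
We bound the three terms as follows. The operator \(T_T\) is uniformly bounded on \(L^2\), and \(\nabla\psi\) is bounded on the fixed support \(\mathcal K\), so the middle term is at most \(Cs\norm A_{L^2}^2\). The commutator bound \eqref{eq:T-comm} controls the last term by \(C_Ks^{-3}\norm P_{L^2}^2\). The first term is at most \(Cs^{-1}\norm{\mathcal R}_{L^2}^2\). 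Altogether,
\[
 \norm{T_TP}_{L^2}^2\le C s\norm A_{L^2}^2+\frac Cs\norm{\mathcal R}_{L^2}^2+\frac{C_K}{s^3}\norm P_{L^2}^2 .
\]
This has exactly the form of \eqref{eq:low-preliminary}, except that the constant in front of \(s\norm A_{L^2}^2\) no longer depends on \(K\), since no low-frequency bound on \(\nabla A\) is needed.

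Finally, we substitute \eqref{eq:negative-wave2} for \(s\norm A_{L^2}^2\) and \eqref{eq:R-est} for \(\norm{\mathcal R}_{L^2}^2\). Using \(s\ge1\), all the \(P\)-terms collapse into \(C_Ks^{-1}\norm P_{L^2}^2\), and the source terms give \(C_K\mathcal F_s+C_Ks^{-2}\mathcal G_s+C_K\mathcal L_s\). This is \eqref{eq:transverse}.

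The only delicate point is making sure that no derivative falls on \(A\) or on \(P\). This is exactly why the cancellation \(T_T\nabla A=0\) is used, and why we need the \(s^{-1}\) commutator bound \eqref{eq:T-comm} for \([T_T,q]\). Any \(O(1)\) loss in that commutator would only be harmless after the \(s^{-1}\) gain of the Volterra estimate, so the bound as stated in \eqref{eq:T-comm} is sufficient.
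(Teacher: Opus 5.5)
Your proposal is correct and follows the paper's proof essentially line by line: you apply \(T_T\) to \eqref{eq:P-eq}, use \(T_T\nabla A=0\) and \(T_TP(0)=0\), invoke the conjugated form of Lemma~\ref{lem:Volterra} with \(B=\beta qI_3\) to reach the transverse analogue of \eqref{eq:low-preliminary}, and then substitute \eqref{eq:negative-wave2} and \eqref{eq:R-est}. One small clarification on your closing remark, which first says the \(s^{-1}\) commutator bound is needed and then says an \(O(1)\) bound would be harmless: the second statement is the accurate one, since an \(O(1)\) bound on \([T_T,q]\) combined with the \(s^{-1}\) factor from Lemma~\ref{lem:Volterra} already produces a term of the form \(C_Ks^{-1}\norm P_{L^2}^2\), so the full strength of \eqref{eq:T-comm} is not needed for this lemma.
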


\begin{proof}
Applying \(T_T\) to \eqref{eq:P-eq} and using
\(T_T\nabla A=0\) gives
\[
\begin{aligned}
 (T_TP)_t
 +(2s\gamma t+\beta q)T_TP
 ={}&
 T_T(\mathcal R-sA\nabla\psi)\\
 &-\beta[T_T,q]P.
\end{aligned}
\]
The initial condition \(P(0)=0\), obtained from
\eqref{eq:temporally-localized-traces}, gives \(T_TP(0)=0\). The conjugated form of
Lemma~\ref{lem:Volterra}, together with \eqref{eq:T-comm} and the
boundedness of \(\nabla\psi\), gives
\begin{equation}\label{eq:transverse-preliminary}
 \norm{T_TP}_{L^2}^2
 \le
 Cs\norm A_{L^2}^2
 +\frac Cs\norm{\mathcal R}_{L^2}^2
 +\frac{C_K}{s^3}\norm P_{L^2}^2.
\end{equation}
Substituting \eqref{eq:negative-wave2} and
\eqref{eq:R-est} into \eqref{eq:transverse-preliminary}, and using
\(s\ge1\), gives \eqref{eq:transverse}.
\end{proof}

The decomposition \eqref{eq:decomp} and
Lemmas~\ref{lem:low}--\ref{lem:transverse} now provide estimates for all
three components of \(P\). The coefficient of
\(\norm P_{L^2}^2\) consists of a \(K^{-2}\)-term whose constant is
independent of \(K\), together with terms of order \(C_Ks^{-1}\) and
\(Cs^{-2}\). This structure determines the order of the final parameter
choice: \(K\) is fixed first, and \(s\) is then chosen sufficiently large.

\begin{proposition}
\label{prop:acoustic}
Assume \eqref{eq:acoustic-support},
\eqref{eq:general-Lq}, and
\eqref{eq:acoustic-regularity}. Let \(a_\eta,\pi_\eta,p_\eta\) be the temporally localized
acoustic fields defined in \eqref{eq:eta-fields} and
\eqref{eq:localized-p-F}. Let
\(\mathcal F_s,\mathcal G_s\) be the weighted source quantities defined in
\eqref{eq:weighted-norms}, before temporal localization, and let
\(\mathcal L_s\) be the terminal remainder defined in
\eqref{eq:terminal-remainder}.

There exist \(s_0\ge s_{\rm B}\) and \(C>0\), depending only on
\[
 \Omega_2,\ T,\ \gamma,\ \delta,\ x_0,\ \beta,\ M,
\]
the fixed compact set \(\mathcal K\), and the cutoff functions
\(\widetilde\chi,\chi,\eta\) chosen in
\eqref{eq:Bessel-operators} and \eqref{eq:temporal-cutoff}, such that
\begin{equation}\label{eq:acoustic-Carleman}
 \norm{\pi_\eta}_s^2
 +s\norm{a_\eta}_s^2
 +s\norm{(a_\eta)_t}_{-1,s}^2
 +\norm{p_\eta}_s^2
 \le
 C\mathcal F_s
 +Cs^{-2}\mathcal G_s
 +C\mathcal L_s,
 \qquad
 s\ge s_0.
\end{equation}
The constants \(s_0\) and \(C\) are independent of the particular
coefficient \(q\in\Qad\) and of the source fields \(F,G\).
\end{proposition}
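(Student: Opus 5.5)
The plan is to assemble the three component estimates into a closed bound for \(\norm P_{L^2}^2\) and then substitute that bound back into \eqref{eq:negative-wave2} and \eqref{eq:p-est}. Since \(P=e^{s\phi}\pi_\eta\), we have \(\norm P_{L^2}=\norm{\pi_\eta}_s\). Likewise \(\norm A_{L^2}=\norm{a_\eta}_s\) and \(\norm{J_s^0H}_{L^2}=\norm{(a_\eta)_t}_{-1,s}\), by \eqref{eq:conjugated-acoustic-variables} and \eqref{eq:weighted-norms}. Hence every term on the left of \eqref{eq:acoustic-Carleman} already appears in the earlier lemmas; only \(P\) remains uncontrolled.

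First, the decomposition \eqref{eq:decomp} and the triangle inequality give
\[
 \norm P_{L^2}^2\le 3\bigl(\norm{T_0P}_{L^2}^2+\norm{T_LP}_{L^2}^2+\norm{T_TP}_{L^2}^2\bigr).
\]
Adding \eqref{eq:low}, \eqref{eq:longitudinal}, and \eqref{eq:transverse} then yields an inequality of the form
\begin{equation}\label{eq:P-pre-absorption}
 \norm P_{L^2}^2\le\Bigl(\frac{C_1}{K^2}+\frac{C_K}{s}+\frac{C}{s^2}\Bigr)\norm P_{L^2}^2+C_K\mathcal F_s+C_Ks^{-2}\mathcal G_s+C_K\mathcal L_s,
\end{equation}
where \(C_1\) is independent of \(K\) by Lemma~\ref{lem:longitudinal}.

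Next, the parameters are chosen in the order announced before the proposition. Fix \(K\ge2\) with \(C_1/K^2\le1/4\); from then on \(K\), and hence \(C_K\), is a fixed a priori constant. Then enlarge \(s_0\ge s_{\rm B}\) so that \(C_K/s+C/s^2\le1/4\) for all \(s\ge s_0\). The \(\norm P_{L^2}^2\)-term can then be absorbed into the left side, which gives
\begin{equation}\label{eq:closed-P-estimate}
 \norm{\pi_\eta}_s^2=\norm P_{L^2}^2\le C\mathcal F_s+Cs^{-2}\mathcal G_s+C\mathcal L_s,\qquad s\ge s_0.
\end{equation}

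Finally, insert \eqref{eq:closed-P-estimate} into \eqref{eq:negative-wave2}. Because \(s^{-1}\norm P^2_{L^2}\le\norm P^2_{L^2}\), this bounds \(s\norm{a_\eta}_s^2+s\norm{(a_\eta)_t}_{-1,s}^2\) by the same right-hand side. Inserting \eqref{eq:closed-P-estimate} into \eqref{eq:p-est}, and using \(\norm F_s^2\le\mathcal F_s\), bounds \(\norm{p_\eta}_s^2\) in the same way. Summing the four bounds gives \eqref{eq:acoustic-Carleman}.

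The independence of \(s_0\) and \(C\) from \(q\) and from \(F,G\) is inherited from the lemmas. Every constant there depends on \(q\) only through \(M\), via \eqref{eq:T-comm}, \eqref{eq:Jq-comm}, and \eqref{eq:Jq-divP}, and none depends on the sources.

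The only delicate point is the order of the parameter choices. The constant in front of \(K^{-2}\) must not depend on \(K\), which Lemma~\ref{lem:longitudinal} guarantees, whereas the \(s^{-1}\)-coefficients do depend on \(K\). For that reason \(K\) has to be fixed before \(s_0\) is chosen. Beyond this, the argument is bookkeeping, since the substantial analytic work is already contained in Proposition~\ref{prop:negative-wave} and Lemmas~\ref{lem:Volterra-reduction}--\ref{lem:transverse}.
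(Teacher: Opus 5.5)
Your proposal is correct and follows the paper's proof essentially verbatim: you sum the three component bounds via \eqref{eq:decomp} and fix $K$ first, using that the $K^{-2}$ constant in Lemma~\ref{lem:longitudinal} is $K$-independent, then choose $s_0$, absorb $\|P\|_{L^2}^2$, and substitute back into \eqref{eq:negative-wave2} and \eqref{eq:p-est}. The one detail to adjust is that $A$, $P$, $J_s^0H$ are taken over $\R^3\times(-T,T)$ while $\|\cdot\|_s$ is over $Q_T=\R^3\times(0,T)$, so the squared norms agree only up to a factor $2$ (e.g.\ $\|P\|_{L^2(\R^3\times(-T,T))}^2=2\|\pi_\eta\|_s^2$), which is absorbed into $C$.
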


\begin{proof}
By \eqref{eq:decomp} and
Lemmas~\ref{lem:low}--\ref{lem:transverse},
\begin{equation}\label{eq:P-pre-absorption}
 \norm P_{L^2}^2
 \le
 3\left(
 \frac C{K^2}
 +\frac{C_K}s
 +\frac C{s^2}
 \right)
 \norm P_{L^2}^2
 +C_K\mathcal F_s
 +C_Ks^{-2}\mathcal G_s
 +C_K\mathcal L_s.
\end{equation}
First choose \(K\ge2\) sufficiently large that
\[
 \frac{3C}{K^2}\le\frac14.
\]
The constant \(C\) in this inequality is independent of \(K\), so the
selected value of \(K\) depends only on the fixed a priori parameters.
With this value of \(K\) fixed, choose \(s_0\ge s_{\rm B}\) sufficiently
large that
\[
 3\left(
 \frac{C_K}s+\frac C{s^2}
 \right)
 \le\frac14,
 \qquad s\ge s_0.
\]
The coefficient of \(\norm P_{L^2}^2\) on the right-hand side of
\eqref{eq:P-pre-absorption} is then at most \(1/2\), and therefore
\begin{equation}\label{eq:closed-P-estimate}
 \norm P_{L^2}^2
 \le
 C\mathcal F_s
 +Cs^{-2}\mathcal G_s
 +C\mathcal L_s,
 \qquad s\ge s_0.
\end{equation}
The order of the choices \(K\) first and \(s\) second makes the absorption
uniform over \(q\in\Qad\); no smallness assumption on \(q\) is used.

Using \eqref{eq:closed-P-estimate} in
\eqref{eq:negative-wave2} and \eqref{eq:p-est}, together with
\[
\begin{aligned}
 \norm A_{L^2(\R^3\times(-T,T))}^2
 &=2\norm{a_\eta}_s^2,\\
 \norm P_{L^2(\R^3\times(-T,T))}^2
 &=2\norm{\pi_\eta}_s^2,\\
 \norm{J_s^0H}_{L^2(\R^3\times(-T,T))}^2
 &=2\norm{(a_\eta)_t}_{-1,s}^2.
\end{aligned}
\]
gives \eqref{eq:acoustic-Carleman}.
\end{proof}

\begin{remark}\label{rem:divergence-forcing}
The right-hand side of \eqref{eq:acoustic-Carleman} contains the weighted
\(L^2\)-norms of \(F\) and \(F_t\), through
\(\mathcal F_s\) defined in \eqref{eq:weighted-norms}, but contains no
spatial derivative of \(F\). For the coefficient-dependent source \(F^{\rm coef}\) defined in
\eqref{eq:v-source}, the estimate controls the source contribution in
terms of the weighted \(L^2(D)\)-norm of \(r=q_1-q_2\), without
introducing spatial derivatives of \(r\).
The bound \eqref{eq:Fcoef} for \(F^{\rm coef}\) is proved in
Section~\ref{sec:nonlinear}.
\end{remark}

Although the inverse problem is formulated for \(q\in\Qad\), the
commutator estimates \eqref{eq:Jq-comm} and \eqref{eq:T-comm} use only the
\(W^{1,\infty}\)-bound on \(q\). The higher coefficient regularity in
\eqref{eq:Qad} is required by the forward estimates in
Proposition~\ref{prop:direct}, which provide the field regularity
\eqref{eq:acoustic-regularity} used throughout this section.


\subsection{Recovery of the initial trace}

Proposition~\ref{prop:acoustic} gives weighted space-time control of the
acoustic fields. By \eqref{eq:general-Lq}, \eqref{eq:acoustic-initial},
and \eqref{eq:temporally-localized-traces}, the initial trace \(h\) satisfies
\begin{equation}\label{eq:acoustic-initial-trace-identification}
 h=v_{tt}(\cdot,0)=a(\cdot,0)=a_\eta(\cdot,0).
\end{equation}
The following proposition obtains a weighted bound for the initial trace
in \eqref{eq:acoustic-initial-trace-identification} from
\eqref{eq:acoustic-Carleman} by an energy identity.

\begin{proposition}\label{prop:initial-trace}
Assume the hypotheses and notation of
Proposition~\ref{prop:acoustic}. Let \(h\) be the initial trace
identified in \eqref{eq:acoustic-initial-trace-identification}, and let
\(\psi\) be the spatial weight defined in \eqref{eq:psi}. After increasing
\(s_0\) if necessary, there exists \(C>0\), with the same dependence on the
fixed a priori parameters as in Proposition~\ref{prop:acoustic}, such that
\begin{equation}\label{eq:initial-trace}
 \norm{e^{s\psi}h}_{L^2(\R^3)}^2
 \le
 Cs\bigl(
 \mathcal F_s+\mathcal G_s+\mathcal L_s
 \bigr),
 \qquad s\ge s_0.
\end{equation}
\end{proposition}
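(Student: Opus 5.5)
The plan is the standard energy-identity argument that converts a weighted space--time Carleman bound into control of an initial trace. Write \(A=e^{s\phi}a_\eta\) as in \eqref{eq:conjugated-acoustic-variables}, restricted to \(0<t<T\). Since \(\eta(T)=0\) and \(\phi(x,0)=\psi(x)\), we have \(A(\cdot,0)=e^{s\psi}h\). The fundamental theorem of calculus gives
\[
 \norm{e^{s\psi}h}_{L^2}^2
 =-\int_0^T\frac{\mathrm d}{\mathrm dt}\norm{A(t)}_{L^2}^2\dd t
 =-2\int_0^T\ip{A}{A_t}\dd t .
\]
Next write \(A_t=H+s\phi_tA\), where \(H=e^{s\phi}(a_\eta)_t\). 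Because \(\phi_t=-2\gamma t\) is bounded on \((0,T)\), the term \(2s\int_0^T\ip{A}{\phi_tA}\) is bounded by \(Cs\norm{a_\eta}_s^2\). By \eqref{eq:acoustic-Carleman}, this is at most \(C(\mathcal F_s+s^{-2}\mathcal G_s+\mathcal L_s)\), which is even better than required.

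The main obstacle is the cross term \(\int\ip{A}{H}\). Here \(H\) is controlled only in the negative norm, through \(s\norm{J_s^0H}_{L^2}^2=s\norm{(a_\eta)_t}_{-1,s}^2\), so we need \(A\) in the dual norm. We use the duality pairing
\[
 \abs{\ip{A}{H}}\le\norm{\Lambda_sA}_{L^2}\norm{\Lambda_s^{-1}H}_{L^2},
\]
together with \(\norm{\Lambda_sA}^2=s^2\norm A^2+\norm{\nabla A}^2\). This requires a weighted \(H^1\) bound on \(a_\eta\), which Proposition~\ref{prop:acoustic} does not state directly. I will get it from the scalar Carleman estimate \eqref{eq:scalar-Carleman} applied to \(J_sA\). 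The left side of \eqref{eq:regularized-Carleman-left} gives \(s\norm{\Lambda_sJ_sA}^2\), and the parametrix \eqref{eq:parametrix} converts this into \(s\norm A^2\); this does not yet give a gradient.

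My alternative plan is to avoid needing \(\nabla A\). Write \(H=\diver P-s\nabla\psi\cdot P+e^{s\phi}G_\eta\) by \eqref{eq:H-divP} and integrate by parts in space. The divergence part gives
\[
 \int\ip{A}{\diver P}=-\int\ip{\nabla A}{P}.
\]
Now use \eqref{eq:P-eq} to write \(\nabla A=P_t+2s\gamma tP+\beta qP+sA\nabla\psi-\mathcal R\). Then
\[
 \int_0^T\ip{P_t}{P}\dd t=-\tfrac12\norm{P(0)}^2=0,
\]
since \(P(0)=0\) by \eqref{eq:temporally-localized-traces} and \(P(T)=0\). The remaining terms are bounded by \(Cs\norm P^2+Cs\norm A\norm P+\norm{\mathcal R}\norm P\). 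The \(e^{s\phi}G_\eta\) part is bounded by \(\norm A\,\norm{e^{s\phi}G_\eta}\) and is handled by \eqref{eq:localized-G-bound}.

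Combining \eqref{eq:closed-P-estimate}, \eqref{eq:R-est}, \eqref{eq:acoustic-Carleman}, and Cauchy--Schwarz gives
\[
 \norm{e^{s\psi}h}_{L^2}^2\le Cs\bigl(\norm P^2+\norm A^2\bigr)+C\norm{\mathcal R}^2+C\mathcal G_s+C\mathcal L_s .
\]
This is at most \(Cs(\mathcal F_s+\mathcal G_s+\mathcal L_s)\), which is \eqref{eq:initial-trace}. The factor \(s\) comes from the \(s\norm P^2\) and \(s\norm A\norm P\) terms, and \(\mathcal G_s\) enters without the favorable \(s^{-2}\) because of the \(G_\eta\) pairing.

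The regularity needed for these integrations by parts follows from \eqref{eq:acoustic-field-regularity}. In particular, \(A\in W^{1,2}(0,T;L^2)\cap L^2(0,T;H^1)\) and \(P\in W^{1,2}(0,T;L^2)\), which justifies taking traces at \(t=0\). We may need to increase \(s_0\) only to invoke the earlier lemmas.
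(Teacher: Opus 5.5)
Your proof is correct and is essentially the paper's argument. Once you drop the duality idea, you differentiate $\norm{A(t)}_{L^2}^2$, insert \eqref{eq:H-divP}, integrate by parts, and substitute \eqref{eq:P-eq} so that $\int_0^T\ip{P_t}{P}\dd t$ vanishes; this is exactly the weighted energy identity for $E_s(t)=\norm{e^{s\phi}a_\eta}_{L^2}^2+\norm{e^{s\phi}\pi_\eta}_{L^2}^2$ that the paper integrates over $(0,T)$, and your closing step (using \eqref{eq:acoustic-Carleman} multiplied by $s$, \eqref{eq:R-est}, and \eqref{eq:localized-G-bound}) is also the same.
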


\begin{proof}
For the fields \(a_\eta,\pi_\eta\) in \eqref{eq:eta-fields}, define
\begin{equation}\label{eq:initial-energy}
 E_s(t)
 =
 \int_{\R^3}
 e^{2s\phi(x,t)}
 \bigl(
 |a_\eta|^2+|\pi_\eta|^2
 \bigr)\dd x.
\end{equation}
The regularity \eqref{eq:acoustic-field-regularity} and the cutoff
definitions \eqref{eq:eta-fields} imply
\[
 a_\eta,\pi_\eta\in W^{1,2}(0,T;L^2(\R^3)).
\]
Their spatial supports are contained in the fixed compact set \(\mathcal K\Subset\Omega_2\) from \eqref{eq:acoustic-support}. For each fixed \(s\ge1\), the functions \(e^{s\phi}\) and \(\partial_t(e^{s\phi})\) are bounded on \(\mathcal K\times[0,T]\). Hence
\[
e^{s\phi}a_\eta,\;
e^{s\phi}\pi_\eta
\in H^1(0,T;L^2(\R^3)).
\]
The Hilbert-space product rule applied to \eqref{eq:initial-energy} gives \(E_s\in W^{1,1}(0,T)\), with
\[
\begin{aligned}
E_s'(t)
={}&2\int_{\R^3}e^{2s\phi}
\bigl(a_\eta(a_\eta)_t+
\pi_\eta\cdot(\pi_\eta)_t\bigr)\dd x+2s\int_{\R^3}\phi_t e^{2s\phi}
\bigl(|a_\eta|^2+|\pi_\eta|^2\bigr)\dd x
\end{aligned}
\]
for almost every \(t\in(0,T)\).

By \eqref{eq:acoustic-field-regularity} and the first equation in \eqref{eq:temporally-localized-system}, for almost every \(t\in(0,T)\),
\[
\begin{aligned}
&a_\eta(t)\in H^1(\R^3),\qquad
\pi_\eta(t)\in L^2(\R^3;\R^3),\\
&\diver\pi_\eta(t)
=(a_\eta)_t(t)-G_\eta(t)\in L^2(\R^3).
\end{aligned}
\]
Thus \(\pi_\eta(t)\in H(\diver;\R^3)\). Moreover, the compact spatial support condition \eqref{eq:acoustic-support} and the definition \eqref{eq:eta-fields} imply
\[
e^{2s\phi(\cdot,t)}a_\eta(t)\in H^1(\R^3).
\]
Applying the \(H^1\)--\(H(\diver)\) integration-by-parts identity on \(\R^3\), and using \(\nabla_x\phi=\nabla\psi\) from \eqref{eq:phi}, we obtain
\[
\begin{aligned}
\int_{\R^3}e^{2s\phi}a_\eta\diver\pi_\eta\dd x
={}&-\int_{\R^3}e^{2s\phi}
\pi_\eta\cdot\nabla a_\eta\dd x -2s\int_{\R^3}e^{2s\phi}
a_\eta\nabla\psi\cdot\pi_\eta\dd x.
\end{aligned}
\]
No spatial boundary term occurs because the integration-by-parts identity is applied on the whole space \(\R^3\).

Since the temporal cutoff \(\eta\) defined in \eqref{eq:temporal-cutoff} vanishes near \(t=T\), the fields \(a_\eta\) and \(\pi_\eta\) vanish there. Consequently,
\[
E_s(T)=0.
\]
At \(t=0\), the initial conditions \eqref{eq:temporally-localized-traces} give
\[
a_\eta(\cdot,0)=h,\qquad
\pi_\eta(\cdot,0)=0.
\]
Since \(\phi(x,0)=\psi(x)\) by \eqref{eq:phi}, the definition of \(E_s\) in \eqref{eq:initial-energy} yields
\[
E_s(0)=\norm{e^{s\psi}h}_{L^2(\R^3)}^2.
\]

Using the temporally localized system
\eqref{eq:temporally-localized-system} and differentiating
\eqref{eq:initial-energy} gives
\begin{equation}\label{eq:initial-energy-identity}
\begin{aligned}
 E_s'(t)
 ={}&
 -4s\gamma tE_s(t)
 -4s\int
 e^{2s\phi}
 a_\eta\nabla\psi\cdot\pi_\eta\dd x\\
 &-2\beta\int
 e^{2s\phi}q|\pi_\eta|^2\dd x
 +2\int
 e^{2s\phi}
 (a_\eta G_\eta+\pi_\eta\cdot R_\eta)\dd x.
\end{aligned}
\end{equation}
Integrating \eqref{eq:initial-energy-identity} from \(0\) to \(T\) and
using the endpoint values gives
\begin{equation}\label{eq:initial-energy-integrated}
\begin{aligned}
 E_s(0)
 ={}&
 4s\gamma\int_0^TtE_s(t)\dd t
 +4s\int_{Q_T}
 e^{2s\phi}
 a_\eta\nabla\psi\cdot\pi_\eta\dd x\dd t\\
 &+2\beta\int_{Q_T}
 e^{2s\phi}q|\pi_\eta|^2\dd x\dd t\\
 &-2\int_{Q_T}
 e^{2s\phi}
 (a_\eta G_\eta+\pi_\eta\cdot R_\eta)\dd x\dd t.
\end{aligned}
\end{equation}
Since \(\nabla\psi\) is bounded on the fixed support, Young's inequality
gives
\begin{equation}\label{eq:initial-energy-pointwise}
\begin{aligned}
 4s|a_\eta|\,|\nabla\psi|\,|\pi_\eta|
 &\le
 Cs^2|a_\eta|^2+C|\pi_\eta|^2,\\
 2|a_\eta G_\eta+\pi_\eta\cdot R_\eta|
 &\le
 |a_\eta|^2+|\pi_\eta|^2
 +C\bigl(|G_\eta|^2+|R_\eta|^2\bigr).
\end{aligned}
\end{equation}
Combining \eqref{eq:initial-energy-pointwise} and
\eqref{eq:initial-energy-integrated}, then applying the bound
\eqref{eq:localized-G-bound} for \(G_\eta\), yields
\begin{equation}\label{eq:initial-energy-bound}
 \norm{e^{s\psi}h}_{L^2}^2
 \le
 C\left(
 s^2\norm{a_\eta}_s^2
 +s\norm{\pi_\eta}_s^2
 +\norm{\mathcal R}_{L^2}^2
 +\mathcal G_s
 +\mathcal L_s
 \right).
\end{equation}
Multiplying \eqref{eq:acoustic-Carleman} by \(s\) controls the
\(s^2\norm{a_\eta}_s^2\) and \(s\norm{\pi_\eta}_s^2\) terms in
\eqref{eq:initial-energy-bound}, while \eqref{eq:R-est} controls
\(\mathcal R=e^{s\phi}R_\eta\). Since \(s\ge1\), substitution
of these two estimates into \eqref{eq:initial-energy-bound} gives
\eqref{eq:initial-trace}.
\end{proof}

When Proposition~\ref{prop:initial-trace} is applied through
\eqref{eq:localized-acoustic-identification}, the localized vector source
splits as
\[
 F^{\rm loc}
 =F^{\rm coef}+
 \bigl(F^{\rm loc}-F^{\rm coef}\bigr).
\]
The coefficient-dependent contribution \(F^{\rm coef}\) is estimated in
Section~\ref{sec:nonlinear}, while the cutoff-generated terms
\(F^{\rm loc}-F^{\rm coef}\) and \(G^{\rm loc}\) are controlled by
\eqref{eq:cutoff-weighted}. These bounds are combined in
Section~\ref{sec:main-proof}.



\section{The coefficient-dependent source and a transport estimate}
\label{sec:nonlinear}

Under the identification \eqref{eq:localized-acoustic-identification}, the initial-trace estimate \eqref{eq:initial-trace} applies to the initial source \(h\), whose relation to the coefficient difference \(r\) is given by \eqref{eq:h-direction} and \eqref{eq:localized-initial-source}. The source norm \(\mathcal F_s\) in \eqref{eq:initial-trace} is evaluated with \(F=F^{\rm loc}\), defined in \eqref{eq:localized-fields}, which contains the coefficient-dependent field \(F^{\rm coef}\) defined in \eqref{eq:v-source}. We first establish the weighted estimate \eqref{eq:Fcoef} for \(F^{\rm coef}\), using the vanishing of \(u_{q,t}(\cdot,0)\) and \(u_{q,tt}(\cdot,0)\) in \(D\), as follows from \eqref{eq:affine-f} and \eqref{eq:forward-initial-traces}. We then prove Lemma~\ref{lem:transport}, which estimates \(r\) in terms of its directional derivative \(\partial_\omega r\).

For the coefficient-dependent source \(F^{\rm coef}\) defined in \eqref{eq:v-source}, set
\begin{equation}\label{eq:coefficient-source-norm}
\mathcal F_s^{\rm coef}
=
\sum_{j=0}^1
\norm{e^{s\phi}\partial_t^jF^{\rm coef}}_{L^2(D\times(0,T))}^2.
\end{equation}

\begin{lemma}
\label{lem:coefficient-source}
Let \(q_1,q_2\in\Qad\), with the coefficient difference \(r\) defined in \eqref{eq:difference-variables}. There exists a constant \(C>0\), independent of \(q_1,q_2\) and \(s\), such that, for every \(s\ge1\),
\begin{equation}\label{eq:Fcoef}
\mathcal F_s^{\rm coef}
\le
C\Mdir^2s^{-3/2}
\norm{e^{s\psi}r}_{L^2(D)}^2.
\end{equation}
\end{lemma}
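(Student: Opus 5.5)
The plan is to write \(F^{\rm coef}=-\beta r\nabla(\partial_t u_{q_2})\) and \(\partial_tF^{\rm coef}=-\beta r\nabla(\partial_t^2u_{q_2})\), and to bound each factor \(\nabla\partial_t^ju_{q_2}\) (\(j=1,2\)) pointwise in \(D\) by a quadratic or linear power of \(t\). The weight then factorises as \(e^{2s\phi}=e^{2s\psi(x)}e^{-2s\gamma t^2}\), so the spatial integral produces \(\norm{e^{s\psi}r}_{L^2(D)}^2\) and the temporal integral produces the gain in \(s\).

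\emph{Step 1: temporal vanishing in \(D\).} From \(u_{q_2,t}(\cdot,0)=0\) and \eqref{eq:forward-initial-traces} with \(\Delta f=0\) near \(\overline D\), we get \(u_{q_2,tt}(\cdot,0)=0\) in a neighborhood of \(\overline D\). Hence \(\nabla u_{q_2,tt}(\cdot,0)=0\) there as well. Taylor's formula in time with integral remainder gives, for \(x\in D\) and \(0<t<T\),
\[
 \nabla\partial_tu_{q_2}(x,t)=\int_0^t(t-\tau)\nabla\partial_t^3u_{q_2}(x,\tau)\dd\tau,
 \qquad
 \nabla\partial_t^2u_{q_2}(x,t)=\int_0^t\nabla\partial_t^3u_{q_2}(x,\tau)\dd\tau.
\]
These identities are justified by the regularity \(u_{q_2}\in W^{3,\infty}(0,T;H^3)\) in \eqref{eq:Xclass}.

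\emph{Step 2: pointwise bounds.} In three dimensions \(H^3\hookrightarrow W^{1,\infty}\), since \(H^2\hookrightarrow L^\infty\). Therefore
\[
 \norm{\nabla\partial_t^3u_{q_2}(\tau)}_{L^\infty}\le C\norm{u_{q_2}}_{\XT}\le C\Mdir
\]
by \eqref{eq:Mstar-bounds}. This gives \(|\nabla\partial_tu_{q_2}(x,t)|\le C\Mdir t^2\) and \(|\nabla\partial_t^2u_{q_2}(x,t)|\le C\Mdir t\) for \(x\in D\). Consequently,
\[
 \mathcal F_s^{\rm coef}\le C\beta^2\Mdir^2\int_D e^{2s\psi}|r|^2\dd x\int_0^T(t^4+t^2)e^{-2s\gamma t^2}\dd t .
\]

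\emph{Step 3: temporal integral.} Substituting \(t=\tau/\sqrt s\) gives \(\int_0^\infty t^{2k}e^{-2s\gamma t^2}\dd t\le C_k s^{-k-1/2}\). For \(k=1\) this is \(Cs^{-3/2}\), and for \(k=2\) it is \(Cs^{-5/2}\le Cs^{-3/2}\) when \(s\ge1\). Combining the three steps yields \eqref{eq:Fcoef}, with \(C\) depending only on \(\beta,\gamma\) and the embedding constants.

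\emph{Main obstacle.} The only delicate point is that the \(\partial_tF^{\rm coef}\) term vanishes only to first order in \(t\). This is what limits the gain to \(s^{-3/2}\), and it relies on \(\Delta f=0\) near \(\overline D\) to kill \(u_{tt}(0)\) there. I would also check that the Sobolev embedding is applied to a representative of \(\partial_t^3u_{q_2}\) that is bounded for almost every \(\tau\). That suffices, since only the integral in \(\tau\) enters.
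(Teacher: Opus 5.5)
Your proposal is correct and follows essentially the same route as the paper. The paper's proof likewise uses the vanishing of \(u_{q_2,t}(\cdot,0)\) and \(u_{q_2,tt}(\cdot,0)=\Delta f\) in \(D\), the Taylor integration of \(\partial_t^3u_{q_2}\) with \(H^3\hookrightarrow W^{1,\infty}\) to obtain the \(t^2\) and \(t\) bounds, and the Gaussian moments that yield \(s^{-3/2}+s^{-5/2}\le 2s^{-3/2}\).
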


\begin{proof}
For every \(q\in\Qad\), the affine condition \eqref{eq:affine-f} and the forward initial-trace identities \eqref{eq:forward-initial-traces} give
\[
u_{q,t}(\cdot,0)=0,
\qquad
u_{q,tt}(\cdot,0)=\Delta f=0
\quad\text{in }D.
\]
Proposition~\ref{prop:direct} and the Sobolev embedding
\(H^3(\R^3)\hookrightarrow W^{1,\infty}(\R^3)\) imply
\[
\norm{u_{q,ttt}}_{L^\infty(0,T;W^{1,\infty}(\R^3))}
\le C\Mdir.
\]

The regularity in \eqref{eq:Xclass} permits integration of the time derivatives in \(H^3(D)\). Since both initial traces vanish in \(D\), we have
\[
\begin{aligned}
u_{q,tt}(x,t)
&=\int_0^t u_{q,ttt}(x,\tau)\dd\tau,\quad u_{q,t}(x,t)
=\int_0^t(t-\tau)u_{q,ttt}(x,\tau)\dd\tau,
\end{aligned}
\]
for almost every \(x\in D\) and \(0\le t\le T\). Consequently,
\begin{equation}\label{eq:time-vanish}
\begin{aligned}
\norm{u_{q,tt}(t)}_{W^{1,\infty}(D)}
&\le C\Mdir t,\quad \norm{u_{q,t}(t)}_{W^{1,\infty}(D)}
\le C\Mdir t^2.
\end{aligned}
\end{equation}

Applying \eqref{eq:time-vanish} with \(q=q_2\) to the source \(F^{\rm coef}\) defined in \eqref{eq:v-source}, we obtain
\begin{equation}\label{eq:source-pointwise}
\begin{aligned}
|F^{\rm coef}(x,t)|
&\le C\Mdir t^2|r(x)|,\quad 
|\partial_tF^{\rm coef}(x,t)|
\le C\Mdir t|r(x)|
\end{aligned}
\end{equation}
for almost every \(x\in D\) and \(0<t<T\).

For \(m>-1\), the change of variables
\(\vartheta=(2s\gamma)^{1/2}t\) gives
\begin{equation}\label{eq:Gaussian-moment}
\int_0^\infty t^m e^{-2s\gamma t^2}\dd t
=C_{m,\gamma}s^{-(m+1)/2}.
\end{equation}
Using the weight \(\phi\) defined in \eqref{eq:phi}, the pointwise bounds \eqref{eq:source-pointwise}, and the definition \eqref{eq:coefficient-source-norm}, we obtain
\[
\begin{aligned}
\mathcal F_s^{\rm coef}
&\le C\Mdir^2
\norm{e^{s\psi}r}_{L^2(D)}^2
\int_0^T(t^2+t^4)e^{-2s\gamma t^2}\dd t\\
&\le C\Mdir^2
\left(s^{-3/2}+s^{-5/2}\right)
\norm{e^{s\psi}r}_{L^2(D)}^2.
\end{aligned}
\]
Since \(s\ge1\), this proves \eqref{eq:Fcoef}.
\end{proof}

The estimate \eqref{eq:Fcoef} controls the coefficient-dependent source by the weighted \(L^2(D)\) norm of \(r\), without requiring a spatial derivative of \(r\). To estimate \(r\) in terms of its directional derivative, we next use the support condition in \eqref{eq:Qad} and the monotonicity assumption \eqref{eq:direction}.

\begin{lemma}\label{lem:transport}
Suppose that $r\in H_0^1(D)$ and $\omega\in\Sph^2$,
and that $\psi$ is the quadratic weight \eqref{eq:psi}.
Assume \eqref{eq:direction} with $c_*>0$.
Then
\begin{equation}\label{eq:transport}
 s^2\norm{e^{s\psi}r}_{L^2(D)}^2
 \le c_*^{-2}
 \norm{e^{s\psi}\partial_\omega r}_{L^2(D)}^2,
 \qquad s>0.
\end{equation}
\end{lemma}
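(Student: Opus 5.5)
The plan is a one-dimensional weighted integration by parts along the direction \(\omega\), in the spirit of a Carleman estimate for the first-order operator \(\partial_\omega\). By density of \(C_c^\infty(D)\) in \(H_0^1(D)\), and since both sides of \eqref{eq:transport} are continuous in the \(H^1\) norm (the weight \(e^{s\psi}\) is bounded on \(\overline D\) for fixed \(s\)), it suffices to treat \(r\in C_c^\infty(D)\). We extend \(r\) by zero to \(\R^3\), so that all integrations by parts produce no boundary terms.

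\textbf{Step 1: conjugation.} Set \(g=e^{s\psi}r\in C_c^\infty(D)\). Then
\[
 e^{s\psi}\partial_\omega r=\partial_\omega g-s(\partial_\omega\psi)g .
\]
Expanding the square of the right-hand side gives
\[
 \norm{e^{s\psi}\partial_\omega r}_{L^2(D)}^2
 =\norm{\partial_\omega g}^2+s^2\norm{(\partial_\omega\psi)g}^2
 -2s\int(\partial_\omega\psi)g\,\partial_\omega g\dd x .
\]

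\textbf{Step 2: the cross term.} We write \(2g\,\partial_\omega g=\partial_\omega(g^2)\) and integrate by parts:
\[
 -2s\int(\partial_\omega\psi)g\,\partial_\omega g\dd x=s\int(\partial_\omega^2\psi)g^2\dd x .
\]
For the quadratic weight \eqref{eq:psi}, \(\partial_\omega^2\psi=2|\omega|^2=2>0\), so this term is nonnegative. Discarding it, together with \(\norm{\partial_\omega g}^2\ge0\), and then using \eqref{eq:direction}, we obtain
\[
 \norm{e^{s\psi}\partial_\omega r}^2\ge s^2\norm{(\partial_\omega\psi)g}^2\ge s^2c_*^2\norm{g}^2 .
\]
This is exactly \eqref{eq:transport}.

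The only step needing care is the sign of the commutator term. Here it is automatically favourable because \(\psi\) is convex, so no restriction on \(s\) beyond \(s>0\) is needed. The remaining point is that \eqref{eq:direction} is used only on \(\supp g\subset D\), which is where it holds.

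Finally, for application to \(r=q_1-q_2\): since \(\supp r\Subset D\) and \(r\in W^{5,\infty}\), we have \(r\in H_0^1(D)\), so the lemma applies.
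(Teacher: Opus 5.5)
Your proposal is correct and follows the paper's proof essentially step by step. Both arguments reduce by density to \(C_c^\infty(D)\) and conjugate by \(e^{s\psi}\). Both then integrate the cross term by parts along \(\omega\) to obtain \(s\int_D(\partial_\omega^2\psi)\,|e^{s\psi}r|^2\dd x\) with \(\partial_\omega^2\psi=2\), discard the nonnegative terms, and conclude with \eqref{eq:direction}.
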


\begin{proof}
Set $r_s=e^{s\psi}r\in H_0^1(D)$. The calculation below is first
performed for $r_s\in C_c^\infty(D)$ and then extended by density in
$H_0^1(D)$. Since
\[
 e^{s\psi}\partial_\omega r
 =\partial_\omega r_s-s(\partial_\omega\psi)r_s,
\]
squaring and integrating over $D$, then integrating the cross term
by parts in the constant direction $\omega$, gives
\[
\begin{aligned}
-2s\int_D(\partial_\omega\psi)r_s\,\partial_\omega r_s\dd x
&=-s\int_D(\partial_\omega\psi)
        \partial_\omega(|r_s|^2)\dd x\\
&=s\int_D(\partial_\omega^2\psi)|r_s|^2\dd x.
\end{aligned}
\]
The boundary term is zero because $r_s$ has zero trace on $\partial D$.
Hence
\begin{equation}\label{eq:transport-energy-identity}
\begin{aligned}
 \norm{e^{s\psi}\partial_\omega r}_{L^2(D)}^2
={}&\norm{\partial_\omega r_s}_{L^2(D)}^2
 +s^2\norm{(\partial_\omega\psi)r_s}_{L^2(D)}^2 \\
&+s\int_D(\partial_\omega^2\psi)|r_s|^2\dd x.
\end{aligned}
\end{equation}
For the weight \eqref{eq:psi},
$\partial_\omega^2\psi=2|\omega|^2=2$.
Using \eqref{eq:direction} in \eqref{eq:transport-energy-identity}
and discarding the nonnegative terms gives
\[
 \norm{e^{s\psi}\partial_\omega r}_{L^2(D)}^2
 \ge s^2c_*^2\norm{r_s}_{L^2(D)}^2.
\]
Since $r_s=e^{s\psi}r$, this proves \eqref{eq:transport}.
\end{proof}


\section{Proof of Theorem~\ref{thm:main}}\label{sec:main-proof}

Let \(q_1,q_2\in\Qad\), and let the coefficient difference \(r\), the initial source \(h\), and the measurement discrepancy \(\Dt\) be defined in \eqref{eq:difference-variables}, \eqref{eq:v-initial}, and \eqref{eq:data-discrepancy}, respectively. We apply the initial-trace estimate \eqref{eq:initial-trace} to the localized difference equation \eqref{eq:localized-equation}, with coefficient \(q_1\) and the fields specified in \eqref{eq:localized-acoustic-identification}. The coefficient-dependent source \(F^{\rm coef}\) is estimated by Lemma~\ref{lem:coefficient-source}, while the cutoff-generated terms \(F^{\rm loc}-F^{\rm coef}\) and \(G^{\rm loc}\) are controlled by the measurement discrepancy through \eqref{eq:cutoff-weighted}. The initial-source identity \eqref{eq:h-direction} and Lemma~\ref{lem:transport} then convert the weighted estimate for \(h\) into an estimate for \(r\).

The initial-trace estimate \eqref{eq:initial-trace} also contains the terminal remainder \(\mathcal L_s\), defined in \eqref{eq:terminal-remainder}. We first establish an exponential bound for \(\mathcal L_s\) corresponding to the localized difference fields, using the uniform forward estimate \eqref{eq:Mstar-bounds} and the spatial localization estimates of Section~\ref{sec:compact-localization}. After absorbing the coefficient-source contribution for sufficiently large \(s\), the positive weight gap \eqref{eq:weight-gap} yields the two-exponential inequality \eqref{eq:two-exp}. Optimizing this inequality in the Carleman parameter gives the stability estimate in Theorem~\ref{thm:main}.

\begin{lemma}\label{lem:localized-terminal}
Let \(q_1,q_2\in\Qad\). In the acoustic problem \eqref{eq:general-Lq}, take the coefficient \(q=q_1\) and the fields \(v,F,G\) specified in \eqref{eq:localized-acoustic-identification}. Let \(a,p,\pi\) be defined in \eqref{eq:acoustic-vars}, and let the temporal cutoff \(\eta\) satisfy \eqref{eq:temporal-cutoff}. Then the terminal remainder \(\mathcal L_s\) defined in \eqref{eq:terminal-remainder} satisfies
\begin{equation}\label{eq:Lterminal}
\mathcal L_s
\le
C\exp\bigl(2s\{M_\psi-\gamma(T-2\delta)^2\}\bigr)\Mdir^2,
\qquad s\ge1,
\end{equation}
where \(M_\psi\) is defined in \eqref{eq:weight-levels}. The constant \(C>0\) depends only on the fixed a priori parameters and cutoff functions, and is independent of \(q_1,q_2\) and \(s\).
\end{lemma}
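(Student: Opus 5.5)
Every term in \eqref{eq:terminal-remainder} carries a factor \(\eta'\) or \(\eta''\), so its integrand is supported in \(\mathcal K\times[T-2\delta,T-\delta]\), with \(\mathcal K\subset\Omega_2\). On this set \eqref{eq:phi} and \eqref{eq:weight-levels} give
\[
 e^{2s\phi(x,t)}\le \exp\bigl(2s\{M_\psi-\gamma(T-2\delta)^2\}\bigr),
\]
because \(\psi\le M_\psi\) on \(\overline{\Omega_2}\) and \(t\ge T-2\delta\). The plan is to pull this factor out of each weighted norm. What remains is to show that the unweighted norms
\[
 \norm{p}_{L^2(Q_T)},\ \norm{\pi}_{L^2(Q_T)},\ \norm{F}_{L^2(Q_T)},\ \norm{a}_{L^2(Q_T)},\ \norm{a_t}_{L^2(Q_T)},\ \norm{G}_{L^2(Q_T)}
\]
are bounded by \(C\Mdir\), uniformly in \(q_1,q_2\). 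The derivatives of \(\eta\) are fixed, so they only enter the constant.

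For the unweighted bounds we use \(v=\widetilde v=\theta w_t\) with \(w=u_{q_1}-u_{q_2}\), together with \eqref{eq:Mstar-bounds}, i.e. \(\norm{u_{q_j}}_{\XT}\le\Mdir\). The acoustic variables in \eqref{eq:acoustic-vars} are \(Y=v_t+\beta v\), \(a=v_{tt}+\beta v_t\) and \(a_t=v_{ttt}+\beta v_{tt}\). These involve at most four time derivatives of \(w\), measured in \(L^\infty(0,T;H^1)\), and \(\XT\) controls exactly this. Next, \(p=\nabla v\) and \(\nabla Y\) need \(w_t,w_{tt}\in L^\infty(0,T;H^1)\), which is also available. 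For the sources, \(F=F^{\rm loc}\) splits into \(F^{\rm coef}=-\beta r\nabla u_{q_2,t}\) and the cutoff part. The first piece is bounded by \(\norm r_{L^\infty}\norm{u_{q_2}}_{\XT}\le CM\Mdir\), which is \(\le C\Mdir\) since \(M\) is a fixed a priori parameter. Finally \(\pi=\nabla Y+F-\beta q_1p\) is bounded in \(L^2\) by these two estimates.

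The cutoff parts \(F^{\rm loc}-F^{\rm coef}\) and \(G^{\rm loc}\) need more care. Here \eqref{eq:localization-error} gives a bound by \(\norm{W_\beta}_{H^1(0,T;L^2(U_\theta))}\). We do not want \(\Dt\) here but an a priori bound, and we get one directly, without Lemma~\ref{lem:exterior}: \(W_\beta=D_\beta w\), so it and its time derivative are controlled by \(w^{(j)}\), \(j\le3\), in \(L^\infty(0,T;L^2)\), hence by \(C\Mdir\). Summing all the pieces yields \(\mathcal L_s\le C\exp(2s\{M_\psi-\gamma(T-2\delta)^2\})\Mdir^2\).

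The only delicate point is bookkeeping. We must check that the highest time derivative appearing (\(v_{ttt}\) in \(a_t\), which is \(w^{(4)}\)) lies within the \(W^{5,\infty}(0,T;H^1)\) part of \(\XT\). We must also check that every constant, including \(\norm{\theta}_{W^{2,\infty}}\), \(\norm{\rho}_{W^{1,\infty}}\) and the multiplier bound for \(V\), is independent of \(q_1,q_2\) and \(s\). Finally, the weight bound must be used only on the support of \(\eta'\), not on all of \((0,T)\).
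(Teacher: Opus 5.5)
Your proposal is correct and follows essentially the same route as the paper: restrict the weight to the support of $\eta',\eta''$ to pull out $\exp\bigl(2s\{M_\psi-\gamma(T-2\delta)^2\}\bigr)$, then bound the unweighted fields by $C\Mdir$ via the forward estimate. As in the paper, the cutoff-generated sources are handled through the localization-error estimate with the a priori bound $\norm{W_\beta}_{H^1(0,T;L^2(U_\theta))}\le C\Mdir$ rather than the data bound. Your remark that only the $L^2$ norms of $p,\pi,F,a,a_t,G$ are needed, rather than the paper's $H^1$-in-time norms of $F$ and $G$, is a harmless sharpening.
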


\begin{proof}
By the definition of \(\Qad\) in \eqref{eq:Qad} and the coefficient difference in \eqref{eq:difference-variables},
\[
\norm{r}_{W^{5,\infty}(\R^3)}\le2M.
\]
The coefficient-dependent source \(F^{\rm coef}\) defined in \eqref{eq:v-source} and Proposition~\ref{prop:direct} therefore give
\[
\norm{F^{\rm coef}}_{H^1(0,T;L^2(\Omega_2))}
\le C\Mdir,
\]
where the relaxation rate \(\beta>0\) and the admissible bound \(M\) are fixed.

Moreover, \(W_\beta=D_\beta w\) by \eqref{eq:filtered-difference-field}, with \(w\) defined in \eqref{eq:difference-variables}. The forward regularity estimate \eqref{eq:Mstar-bounds} consequently implies
\[
\norm{W_\beta}_{H^1(0,T;L^2(U_\theta))}
\le C\Mdir.
\]
Combining this bound with the localization-error estimate \eqref{eq:localization-error} yields
\[
\begin{aligned}
&\norm{F^{\rm loc}-F^{\rm coef}}_{H^1(0,T;L^2(\Omega_2))}\\
&\qquad
+\norm{G^{\rm loc}}_{H^1(0,T;L^2(\Omega_2))}
\le C\Mdir.
\end{aligned}
\]
The definitions of the localized fields in \eqref{eq:localized-fields}, the acoustic variables in \eqref{eq:acoustic-vars}, and the forward regularity estimates then give
\begin{equation}\label{eq:terminal-field-bounds}
\begin{aligned}
&\norm{p}_{L^2(Q_T)}
+\norm{\pi}_{L^2(Q_T)}
+\norm{F}_{H^1(0,T;L^2(\Omega_2))}\\
&\qquad
+\norm{a}_{H^1(0,T;L^2(\Omega_2))}
+\norm{G}_{H^1(0,T;L^2(\Omega_2))}
\le C\Mdir.
\end{aligned}
\end{equation}
Here \(a,p,\pi,F,G\) are the acoustic fields and source terms before temporal localization, with the identification \eqref{eq:localized-acoustic-identification}.

By \eqref{eq:temporal-cutoff}, the derivatives \(\eta'\) and \(\eta''\) are bounded, and their supports on the positive time interval are contained in \([T-2\delta,T-\delta]\). The weight \(\phi\) defined in \eqref{eq:phi} consequently satisfies
\[
\phi(x,t)\le
M_\psi-\gamma(T-2\delta)^2
\]
for every \(x\in\Omega_2\) and \(t\in\supp\eta'\cap(0,T)\), with the same bound on the support of \(\eta''\).

Using the definition of \(\mathcal L_s\) in \eqref{eq:terminal-remainder}, the support properties of the cutoff derivatives, and the preceding weight bound, we obtain
\begin{equation}\label{eq:terminal-remainder-unweighted}
\begin{aligned}
\mathcal L_s
\le{}&
C_\eta
e^{2s\{M_\psi-\gamma(T-2\delta)^2\}}
\Bigl(
\norm{p}_{L^2(Q_T)}^2
+\norm{\pi}_{L^2(Q_T)}^2\\
&\qquad
+\norm{F}_{H^1(0,T;L^2(\Omega_2))}^2
+\norm{a}_{H^1(0,T;L^2(\Omega_2))}^2\\
&\qquad
+\norm{G}_{H^1(0,T;L^2(\Omega_2))}^2
\Bigr),
\qquad s\ge1.
\end{aligned}
\end{equation}
The constant \(C_\eta\) depends on the fixed temporal cutoff and the a priori parameters, but not on \(s,q_1,q_2\).

Combining \eqref{eq:terminal-remainder-unweighted} with \eqref{eq:terminal-field-bounds} proves \eqref{eq:Lterminal}.
\end{proof}

\begin{proof}[Proof of Theorem~\ref{thm:main}]
Fix a compact set \(\mathcal K\Subset\Omega_2\) containing
\[
\overline D\cup\supp\theta\cup\supp\rho,
\]
where the spatial cutoffs \(\theta,\rho\) are specified in \eqref{eq:spatial-cutoffs}. The set \(\mathcal K\) is independent of the particular coefficients \(q_1,q_2\in\Qad\).

We apply the general acoustic problem \eqref{eq:general-Lq} with coefficient \(q=q_1\) and the fields specified in \eqref{eq:localized-acoustic-identification}. Proposition~\ref{prop:localized-difference} establishes the localized equation \eqref{eq:localized-equation}, the initial conditions \eqref{eq:localized-initial-source}, and the source compatibility condition \eqref{eq:localized-source-initial}.

The coefficient-dependent source \(F^{\rm coef}\) is supported in \(D\) by \eqref{eq:v-source}. By \eqref{eq:localized-fields}, the remaining source terms are supported in \(\supp\theta\cup\supp\rho\). Hence the support condition \eqref{eq:acoustic-support} holds with the fixed compact set \(\mathcal K\). The regularity condition \eqref{eq:acoustic-regularity} follows from Proposition~\ref{prop:direct}, Lemma~\ref{lem:compact-localization}, and \eqref{eq:localization-error}. Thus the hypotheses of Proposition~\ref{prop:initial-trace} are satisfied.

We next compare the source norms \(\mathcal F_s,\mathcal G_s\) in \eqref{eq:weighted-norms} with the coefficient-dependent and cutoff-generated source norms. Under the identification \eqref{eq:localized-acoustic-identification}, the definitions \eqref{eq:localized-fields}, \eqref{eq:coefficient-source-norm}, and \eqref{eq:cutoff-source-norms} give
\begin{equation}\label{eq:source-norm-decomposition}
\mathcal F_s
\le
2\mathcal F_s^{\rm coef}
+2\mathcal F_s^{\rm cut},
\qquad
\mathcal G_s=\mathcal G_s^{\rm cut}.
\end{equation}
Indeed, the first inequality follows from
\[
F^{\rm loc}
=
F^{\rm coef}+(F^{\rm loc}-F^{\rm coef})
\]
and the elementary bound
$
|X+Y|^2\le2|X|^2+2|Y|^2.
$
The supports of the two summands permit the corresponding integrals to be taken over \(D\) and \(\Omega_2\), respectively, as prescribed in \eqref{eq:coefficient-source-norm} and \eqref{eq:cutoff-source-norms}. The equality for \(\mathcal G_s\) follows from \(G=G^{\rm loc}\) in \eqref{eq:localized-acoustic-identification}.

Applying Proposition~\ref{prop:initial-trace}, together with \eqref{eq:source-norm-decomposition} and the measurement estimate \eqref{eq:cutoff-weighted}, gives
\[
\norm{e^{s\psi}h}_{L^2(D)}^2
\le
Cs\mathcal F_s^{\rm coef}
+Cs e^{C_bs}\Dt^2
+Cs\mathcal L_s
\]
for all sufficiently large \(s\). Here the initial source \(h\) is supported in \(D\) by \eqref{eq:h-direction} and \eqref{eq:Qad}.

We retain the constant \(C_b\) appearing in \eqref{eq:cutoff-weighted}. Since
\[
s e^{C_bs}\le e^{(C_b+1)s},
\qquad s\ge1,
\]
the preceding inequality yields
\begin{equation}\label{eq:initial-trace-with-data}
\norm{e^{s\psi}h}_{L^2(D)}^2
\le
Cs\mathcal F_s^{\rm coef}
+Ce^{(C_b+1)s}\Dt^2
+Cs\mathcal L_s.
\end{equation}

By \eqref{eq:difference-variables} and the support condition in \eqref{eq:Qad}, the coefficient difference \(r\) belongs to \(H_0^1(D)\). Applying Lemma~\ref{lem:transport} and using the initial-source identity \eqref{eq:h-direction}, we obtain
\begin{equation}\label{eq:weighted-r-by-h}
s^2\norm{e^{s\psi}r}_{L^2(D)}^2
\le
C\norm{e^{s\psi}h}_{L^2(D)}^2,
\end{equation}
where \(C\) may depend on the fixed relaxation rate \(\beta>0\), but is independent of \(q_1,q_2\) and \(s\).

Combining \eqref{eq:weighted-r-by-h} with \eqref{eq:initial-trace-with-data} and the coefficient-source estimate \eqref{eq:Fcoef} established in Lemma~\ref{lem:coefficient-source}, we obtain
\begin{equation}\label{eq:main-step2}
\begin{aligned}
s^2\norm{e^{s\psi}r}_{L^2(D)}^2
\le{}&
C_1s^{-1/2}\Mdir^2
\norm{e^{s\psi}r}_{L^2(D)}^2\\
&+Ce^{(C_b+1)s}\Dt^2
+Cs\mathcal L_s.
\end{aligned}
\end{equation}
The factor \(s^{-1/2}\) comes from multiplying the factor \(s\) in \eqref{eq:initial-trace-with-data} by the factor \(s^{-3/2}\) in \eqref{eq:Fcoef}. The constant \(C_1>0\) depends only on the fixed a priori parameters.

To absorb the first term $
C_1s^{-1/2}\Mdir^2
\norm{e^{s\psi}r}_{L^2(D)}^2 
$ on the right-hand side of \eqref{eq:main-step2}, we compare its coefficient \(C_1s^{-1/2}\Mdir^2\) with the factor \(s^2\) on the left-hand side. Their ratio is
\[
C_1\Mdir^2s^{-5/2}.
\]
Since \(\Mdir\) depends only on the fixed a priori data by \eqref{eq:Mstar}, we choose \(s_0\ge1\), no smaller than the threshold in Proposition~\ref{prop:initial-trace}, such that
\begin{equation}\label{eq:nonlinear-absorption-threshold}
C_1\Mdir^2s_0^{-5/2}\le\frac12.
\end{equation}
For every \(s\ge s_0\),
\[
C_1s^{-1/2}\Mdir^2\le\frac12s^2.
\]
It follows from \eqref{eq:main-step2} that
\[
\frac12s^2\norm{e^{s\psi}r}_{L^2(D)}^2
\le
Ce^{(C_b+1)s}\Dt^2+Cs\mathcal L_s.
\]
After enlarging \(C\), we therefore have
\[
s^2\norm{e^{s\psi}r}_{L^2(D)}^2
\le
Ce^{(C_b+1)s}\Dt^2+Cs\mathcal L_s,
\qquad s\ge s_0.
\]
The threshold \(s_0\) depends only on the fixed a priori data and is independent of \(q_1,q_2\) and \(\Dt\).

We now use the terminal remainder estimate \eqref{eq:Lterminal} from Lemma~\ref{lem:localized-terminal}. By the definition of \(m_D\) in \eqref{eq:weight-levels}, we have \(\psi(x)\ge m_D\) for every \(x\in D\). Hence
\[
\norm{e^{s\psi}r}_{L^2(D)}^2
\ge
e^{2sm_D}\norm{r}_{L^2(D)}^2.
\]
Combining this inequality with the absorbed estimate and \eqref{eq:Lterminal} gives
\[
\begin{aligned}
s^2e^{2sm_D}\norm{r}_{L^2(D)}^2
\le{}&
Ce^{(C_b+1)s}\Dt^2\\
&+Cs e^{2s\{M_\psi-\gamma(T-2\delta)^2\}}
\Mdir^2.
\end{aligned}
\]
Dividing by \(s^2e^{2sm_D}\) and using the definition of \(d_*\) in \eqref{eq:weight-levels}, we obtain
\begin{equation}\label{eq:pre-two-exp}
\begin{aligned}
\norm{r}_{L^2(D)}^2
\le{}&
Cs^{-2}e^{(C_b+1-2m_D)s}\Dt^2\\
&+Cs^{-1}e^{-2d_*s}\Mdir^2,
\qquad s\ge s_0.
\end{aligned}
\end{equation}
The observation-time condition \eqref{eq:long-time} ensures that \(d_*>0\).

Choose a fixed constant
\[
A_0>\max\{C_b+1-2m_D,0\}.
\]
Since \(s^{-2}\le1\) and \(s^{-1}\le1\) for \(s\ge1\), inequality \eqref{eq:pre-two-exp} implies
\begin{equation}\label{eq:two-exp}
\norm{r}_{L^2(D)}^2
\le
Ce^{A_0s}\Dt^2
+Ce^{-2d_*s}\Mdir^2,
\qquad s\ge s_0.
\end{equation}
The constants \(A_0,d_*,s_0\), and \(C\) depend only on the fixed a priori data.

If \(\Dt=0\), letting \(s\to\infty\) in \eqref{eq:two-exp} gives
\[
\norm{r}_{L^2(D)}^2=0,
\]
because \(d_*>0\). Hence \(q_1=q_2\) almost everywhere in \(D\), proving uniqueness.

For the stability estimate, suppose first that
\[
0<\Dt<\Mdir.
\]
Define
\begin{equation}\label{eq:optimization-parameters}
s_*=
\frac{2}{A_0+2d_*}
\log\frac{\Mdir}{\Dt},
\qquad
\kappa=
\frac{2d_*}{A_0+2d_*}\in(0,1).
\end{equation}
We distinguish the cases \(s_*\ge s_0\) and \(s_*<s_0\).

If \(s_*\ge s_0\), substituting \(s=s_*\) into \eqref{eq:two-exp} gives two equal contributions on the right-hand side. Indeed, \eqref{eq:optimization-parameters} implies
\[
\begin{aligned}
e^{A_0s_*}\Dt^2
&=e^{-2d_*s_*}\Mdir^2\\
&=\Mdir^{2(1-\kappa)}\Dt^{2\kappa}.
\end{aligned}
\]
Consequently,
\[
\norm{r}_{L^2(D)}^2
\le
C\Mdir^{2(1-\kappa)}\Dt^{2\kappa},
\]
and taking square roots yields
\[
\norm{r}_{L^2(D)}
\le
C\Mdir^{1-\kappa}\Dt^\kappa.
\]

If \(s_*<s_0\), the definition of \(s_*\) gives
\[
\log\frac{\Mdir}{\Dt}
<
\frac{A_0+2d_*}{2}s_0.
\]
Multiplying by \(\kappa\) and using
\(\kappa(A_0+2d_*)=2d_*\), we obtain
\[
\left(\frac{\Dt}{\Mdir}\right)^\kappa
>
e^{-d_*s_0}.
\]
The a priori bound \eqref{eq:r-apriori-Mdir} therefore implies
\[
\begin{aligned}
\norm{r}_{L^2(D)}
&\le\Mdir\\
&\le e^{d_*s_0}
\Mdir^{1-\kappa}\Dt^\kappa.
\end{aligned}
\]

Finally, if \(\Dt\ge\Mdir\), the same a priori bound \eqref{eq:r-apriori-Mdir} gives
\[
\norm{r}_{L^2(D)}
\le\Mdir
\le\Mdir^{1-\kappa}\Dt^\kappa.
\]

Combining these cases with the uniqueness result for \(\Dt=0\), and recalling the definition \(r=q_1-q_2\) in \eqref{eq:difference-variables}, we conclude that
\[
\norm{q_1-q_2}_{L^2(D)}
\le
C\Mdir^{1-\kappa}\Dt^\kappa.
\]
The exponent \(\kappa\in(0,1)\) and the constant \(C>0\), enlarged to include the factor \(e^{d_*s_0}\), depend only on the fixed a priori data.
\end{proof}


\section*{Acknowledgments}

This work was supported by the National Natural Science Foundation of China
(Grant No. 12371422) and by the Fundamental Research Funds for the Central
Universities at Jilin University. The authors used OpenAI ChatGPT to assist with English editing, LaTeX organization, and suggestions concerning the presentation of mathematical arguments. The authors assume responsibility for all content.

\end{document}